%% 
%% Copyright 2007-2025 Elsevier Ltd
%% 
%% This file is part of the 'Elsarticle Bundle'.
%% ---------------------------------------------
%% 
%% It may be distributed under the conditions of the LaTeX Project Public
%% License, either version 1.3 of this license or (at your option) any
%% later version.  The latest version of this license is in
%%    http://www.latex-project.org/lppl.txt
%% and version 1.3 or later is part of all distributions of LaTeX
%% version 1999/12/01 or later.
%% 
%% The list of all files belonging to the 'Elsarticle Bundle' is
%% given in the file `manifest.txt'.
%% 
%% Template article for Elsevier's document class `elsarticle'
%% with numbered style bibliographic references
%% SP 2008/03/01
%% $Id: elsarticle-template-num.tex 272 2025-01-09 17:36:26Z rishi $
%%
\documentclass[preprint,12pt]{elsarticle}

%% Use the option review to obtain double line spacing
%% \documentclass[authoryear,preprint,review,12pt]{elsarticle}

%% Use the options 1p,twocolumn; 3p; 3p,twocolumn; 5p; or 5p,twocolumn
%% for a journal layout:
%% \documentclass[final,1p,times]{elsarticle}
%% \documentclass[final,1p,times,twocolumn]{elsarticle}
%% \documentclass[final,3p,times]{elsarticle}
%% \documentclass[final,3p,times,twocolumn]{elsarticle}
%% \documentclass[final,5p,times]{elsarticle}
%% \documentclass[final,5p,times,twocolumn]{elsarticle}

%% For including figures, graphicx.sty has been loaded in
%% elsarticle.cls. If you prefer to use the old commands
%% please give \usepackage{epsfig}

%% The amssymb package provides various useful mathematical symbols
\usepackage{amssymb}
%% The amsmath package provides various useful equation environments.
\usepackage{amsmath}
%% The amsthm package provides extended theorem environments
%% \usepackage{amsthm}

%% The lineno packages adds line numbers. Start line numbering with
%% \begin{linenumbers}, end it with \end{linenumbers}. Or switch it on
%% for the whole article with \linenumbers.
%% \usepackage{lineno}
\usepackage{amsfonts,amssymb,amsmath,mathrsfs,graphicx}

\usepackage{titlesec}
\usepackage{amsthm}
\usepackage{amsmath}
\usepackage{amssymb}
\usepackage{color}
\usepackage{mathtools}
\usepackage{geometry}

\numberwithin{equation}{section}

\newtheorem{theorem}{Theorem}[section]

\newtheorem{lemma}[theorem]{Lemma}
\newtheorem{proposition}[theorem]{Proposition}

\newtheorem{definition}[theorem]{Definition}
\newtheorem{remark}[theorem]{Remark}

\newcommand{\bfx}{\boldsymbol{x}}
\newcommand{\bfz}{\boldsymbol{z}}
\newcommand{\bfv}{\boldsymbol{v}}
\newcommand{\bfn}{\boldsymbol{n}}

\newcommand{\bfu}{\boldsymbol{u}_o}
\newcommand{\bfuu}{\boldsymbol{u}}
\newcommand{\bff}{\boldsymbol{f}}
\newcommand{\bfF}{\boldsymbol{F}}
\newcommand{\bfM}{\boldsymbol{M}}

\newcommand{\bbr}{\mathbb R}

%%%%%%%%%%%%%%%%%%%%%%%%%%%%%%%%%%%%%%%%%%%%%%%%%%%%%%%
\usepackage{array}
\newcolumntype{C}[1]{>{\centering\arraybackslash}m{#1}}
%
%% definitions used by included articles, reproduced here for
%% educational benefit, and to minimize the alterations that need to be made
%% in developing this sample file.
\usepackage{graphics}
\usepackage{amssymb, latexsym, amsmath, amsfonts, epsfig}
\usepackage{bm}
\usepackage{graphicx}
\usepackage{color}
\usepackage{epstopdf}
\usepackage{comment}
\usepackage{soul}
\usepackage[normalem]{ulem}
\setlength{\parindent}{2em}
\usepackage{float}

\newcommand{\rev}[1]{{\color{black}#1}}

\def\d{{\, \rm d}}

\newcommand{\M}{\mathcal{M}}
\newcommand{\E}{\mathcal{E}}

\journal{Physica D: Nonlinear Phenomena}

\begin{document}

\begin{frontmatter}

%% Title, authors and addresses

%% use the tnoteref command within \title for footnotes;
%% use the tnotetext command for theassociated footnote;
%% use the fnref command within \author or \affiliation for footnotes;
%% use the fntext command for theassociated footnote;
%% use the corref command within \author for corresponding author footnotes;
%% use the cortext command for theassociated footnote;
%% use the ead command for the email address,
%% and the form \ead[url] for the home page:
 \title{Particle, kinetic and hydrodynamic models for sea ice floes. Part I: \rev{N}on-rotating floes}
% \tnotetext[label1]{thanks}
%\author{Quanling Deng\corref{cor1}\fnref{qd}}
\author{Quanling Deng\fnref{qd}}
  \fntext[qd]{School of Computing, Australian National University, Canberra, ACT 2601, Australia. \rev{Present address: Yau Mathematical Sciences Center, Tsinghua University, Beijing, 100084, China.} }
 \ead{quanling.deng@anu.edu.au; qdeng12@gmail.com}
 %\ead[url]{home page}
% \cortext[cor1]{Corresponding author.}
  
\author{Seung-Yeal Ha\fnref{sh}}
\fntext[sh]{Department of Mathematical Sciences and Research Institute of Mathematics, Seoul National University, Seoul 08826, Republic of Korea.}
 \ead{syha@snu.ac.kr}

% \affiliation{organization={},
%             addressline={},
%             city={},
%             postcode={},
%             state={},
%             country={}}
% \fntext[label3]{}

%\title{Particle, kinetic and hydrodynamic models for sea ice floes. Part I: non-rotating floes}

%% use optional labels to link authors explicitly to addresses:
%% \author[label1,label2]{}
%% \affiliation[label1]{organization={},
%%             addressline={},
%%             city={},
%%             postcode={},
%%             state={},
%%             country={}}
%%
%% \affiliation[label2]{organization={},
%%             addressline={},
%%             city={},
%%             postcode={},
%%             state={},
%%             country={}}

%\author[qd]{Quanling Deng} %% Author name
%
%%% Author affiliation
%\affiliation[qd]{organization={School of Computing, Australian National University},%Department and Organization
%            %addressline={}, 
%            city={Canberra},
%            postcode={2601}, 
%            state={ACT},
%            country={Australia (qdeng12@gmail.com; quanling.deng@anu.edu.au)}}
%            
%\author[sh]{Seung-Yeal Ha} %% Author name
%
%%% Author affiliation
%\affiliation[sh]{organization={Department of Mathematical Sciences and Research Institute of Mathematics, Seoul National University},%Department and Organization
%            %addressline={}, 
%            city={Seoul},
%            postcode={08826}, 
%            %state={},
%            country={Republic of Korea (syha@snu.ac.kr)}}

%% Abstract
\begin{abstract}
%% Text of abstract
We introduce a comprehensive modeling framework for the dynamics of sea ice floes using particle, kinetic, and hydrodynamic approaches. Building upon the foundational work of Ha and Tadmor on the Cucker-Smale model for flocking, we derive a Vlasov-type kinetic formulation and a corresponding hydrodynamic description. The particle model incorporates essential physical properties of sea ice floes, including size, position, velocity, and interactions governed by Newtonian mechanics. By extending these principles, the kinetic model captures large-scale features through the phase-space distribution, and we also present a hydrodynamic model using the velocity moments and a suitable closure condition. In this paper, as an idea-introductory step, we assume that ice floes are non-rotating and focus on the linear velocity dynamics. Our approach highlights the role of contact forces, ocean drag effects, and conservation laws in the multiscale description of sea ice dynamics, offering a \rev{potential pathway} for the improved understanding and prediction of sea ice behaviors in changing climatic conditions.
\end{abstract}

%%Graphical abstract
%\begin{graphicalabstract}
%\includegraphics{grabs}
%\end{graphicalabstract}

%%Research highlights
%\begin{highlights}
%\item Research highlight 1
%\item Research highlight 2
%\end{highlights}

%% Keywords
\begin{keyword}
%% keywords here, in the form: keyword \sep keyword
Hydrodynamic formulation \sep kinetic theory  \sep multiscale dynamics \sep  particle modeling \sep  sea ice floes
%% PACS codes here, in the form: \PACS code \sep code

%% MSC codes here, in the form: \MSC code \sep code
%% or \MSC[2008] code \sep code (2000 is the default)

\end{keyword}

\end{frontmatter}

%% Add \usepackage{lineno} before \begin{document} and uncomment 
%% following line to enable line numbers
%% \linenumbers

%% main text
%%

\section{Introduction} %\label{sec:intr} 
The dynamics of sea ice play a pivotal role in Earth's climate system, influencing ocean circulation, heat exchange, and ecological processes in polar regions. The behavior of sea ice is complex, shaped by interactions between individual ice sheets/floes and environmental forces such as atmospheric wind, ocean currents, and temperature gradients. To better understand and predict these dynamics, a variety of modeling approaches have been developed, ranging from continuum hydrodynamic models \cite{dansereau2016maxwell,hibler1979dynamic,hunke1997elastic} to particle (discrete element) models \cite{damsgaard2018application, herman2016discrete, hopkins2004discrete, lindsay2004new, manucharyan2022subzero}.

The accurate simulation of sea ice dynamics is critical for understanding polar climate systems, particularly in the Marginal Ice Zone (MIZ), where sea ice transitions to open ocean. In such regions, the ice cover is highly fragmented and best described as a collection of interacting floes rather than as a continuous medium. Traditional continuum-based models \rev{\cite{nye1959motion, hibler1979dynamic, nye1991topology, feltham2008sea, hunke2015cice}}, while successful in capturing large-scale behaviors of compact ice packs, rely on the continuum assumption.
However, continuum assumption may break down at both small scale, where individual floe interactions are dominant, and large scale, where floe heterogeneity and fracture patterns become significant. 
In contrast, particle-based models \cite{hopkins2004discrete,wilchinsky2006modelling,herman2016discrete,de2024modelling} represent sea ice as a discrete entity and offer a more physically consistent framework for capturing the granular nature of floe dynamics, including collisions, fracturing, and ridging. These models are particularly well-suited for studying MIZ processes, where the mechanical and thermodynamic behaviors of individual floes play a pivotal role in determining the evolution of the ice cover. Developing and refining particle-based simulations is thus essential for bridging different scales and improving predictive capabilities in sea ice modeling \cite{blockley2020future,golden2020modeling,hunke2010sea}.

Recent studies have highlighted the importance of multiscale approaches in capturing the essential features of sea ice dynamics. Particle models, such as those inspired by the Cucker-Smale flocking dynamics, offer an analytical framework for understanding interactions at the particle level while enabling the derivation of continuum models via the corresponding kinetic equation~\cite{ha2008Kinetic}. These models incorporate fundamental physical properties such as mass, position, and velocity, and account for forces including inter-floe contact forces and environmental drag. Building on this foundation, kinetic formulations provide a bridge to hydrodynamic descriptions from particle descriptions, allowing for the integration of large-scale features into a cohesive particle-continuum multiscale framework \cite{deng2024particle}.

Kinetic theory provides a foundational framework for bridging small-scale particle dynamics and large-scale continuum descriptions in complex systems. 
Originating from the statistical mechanics of gases, kinetic equation models the evolution of \rev{one-particle distribution function} in phase space, and it has been successful in the derivation of fluid equations such as the Navier–Stokes and Euler equations \cite{cercignani2013mathematical, golse2005boltzmann, golse2016dynamics, saint2009hydrodynamic}. 
A central analytical tool in this transition is the BBGKY hierarchy (Bogoliubov–Born–Green–Kirkwood–Yvon, \cite{Bogoliubov1946,bogoliubov1946kinetic,born1946general,kirkwood1946statistical,yvon1935theorie}), which systematically connects the Liouville equation to limiting marginal distribution functions. By incorporating appropriate closure assumptions and scaling limits, such as the Boltzmann–Grad limit, one can obtain kinetic equations like the Boltzmann equation, from which hydrodynamic limits can be derived via the Chapman–Enskog and Hilbert expansions, moment methods, or entropy-based approaches. 
In recent decades, kinetic theory has been extended beyond classical gases to a wide range of interacting particle systems, including flocking models \cite{ ha2009simple, ha2008Kinetic, haskovec2013flocking, park2010cucker} and granular media \cite{dufty2000statistical, dufty2001kinetic, hinrichsen2006physics}.

In sea ice floe dynamics, kinetic theory provides a powerful framework for bridging the gap between discrete floe-level interactions and continuum-scale descriptions of sea ice dynamics and rheology. In this work, we initiate a rigorous systematic investigation into the application of kinetic theory to sea ice modeling by introducing a particle-to-continuum pathway that captures the multiscale nature of floe dynamics. 
As a foundational step, we consider a simplified \rev{set-up} involving non-rotating and colliding cylindrical floes. 
Cylindrical floes are widely used in discrete element modeling (DEM; see, for example, \cite{damsgaard2018application,herman2016discrete}). 
This idealized model allows us to focus on the essential features of floe interactions while establishing a rigorous mathematical framework for the development of a continuum hydrodynamic model. 
% that is integrated into the particle-continuum multiscale model developed in \cite{deng2024particle}.
%
We consider ocean drag forces and assume one-way coupling that the ocean drags the floes but the floe does not impact the ocean. We ignore atmospheric drag forces for simplicity. They can be included, and the derivations remain in a similar fashion to the case of ocean drag.
We adopt \rev{distribution laws} for initializing the floes, namely power-law distributions \cite{alberello2022three,roach2018emergent,meylan2021floe} for floe sizes and gamma distributions \cite{bourke1987sea, toyota2011size} for floe thickness. 
\rev{Evolution models for the floe size distribution \cite{perovich2014seasonal,horvat2017evolution} and for the floe thickness distribution \cite{toppaladoddi2015theory,toppaladoddi2023seasonal} can be incorporated into the particle framework to account for processes such as melting and refreezing.}
Using a particle model \cite{chen2022superfloe, chen2021lagrangian, damsgaard2018application, kruggel2007review} as the foundation and kinetic theory as a tool, we derive a Vlasov-type kinetic model and the corresponding hydrodynamic model.  
With the particle model and an assumption that the particle number is sufficiently large, we derive the kinetic description based on the distribution function in phase space consisting of the radius, thickness, displacement, and velocity of sea ice floes. Then, the equations of lower-order moments are derived by integrating phase space, which captures the large-scale features of the particles and serves as the coarse-scale continuum model.
This approach highlights the influence of contact forces, drag effects, and conservation laws on the multiscale dynamics of sea ice, paving the way for more realistic and predictive models in future.

The rest of the paper is organized as follows. In Section \ref{sec:pmodel}, we introduce the particle model for colliding and non-rotating sea ice floes in an idealized setting. It is often referred to as the discrete element model (see Sec. \ref{sec:dem}). Then we establish several results for the asymptotic behavior of the dynamics. In Section \ref{sec:kmodel} and Section \ref{sec:hmodel}, under the assumption of a large number of floes, we develop the Vlasov-type kinetic model, followed by further development for large scales of the dynamics captured by a hydrodynamic model. In both kinetic and hydrodynamical models, we study the asymptotic behavior of total momentum and energy. In Section \ref{sec:num}, we present several numerical simulations in various settings to verify our theoretical findings for the particle model.  Finally, Section \ref{sec:conclusion} is devoted to a brief summary of main results and some remaining issues for future work.

\section{Particle description for ice floes} \label{sec:pmodel}
In this section, we first present an idealized particle model for sea ice floes following recent works \cite{chen2022superfloe, chen2021lagrangian, damsgaard2018application, herman2016discrete}, and then we introduce the LaSalle invariance principle, which serves as a tool for the establishment of an asymptotic result on floe velocities. 
Our main result is the relaxation of floe velocities to ocean velocity (see Theorem \eqref{thm:v0}) in Section \ref{sec:pmodelab}, which establishes the dynamics of the total momentum and energies of the particles in the entire system.

\subsection{The particle model} \label{sec:dem}
Consider non-rotating and colliding ice floes with the geometry of cylinders. 
Given a system of $n$ floes, 
we denote by $r^i$ the radius and $h^i$ the thickness (or height) of the $i$-th floe with $i \in [n] := \{1,2, \cdots, n \}$. 
The radius and thickness characterize the floe size. 
In a realistic sea ice floe setting (particularly in marginal ice zones), the floe size follows a power law distribution \cite{stern2018seasonal}, while the floe thickness distribution usually follows a Gamma distribution (see, for example, \cite{bourke1987sea} for the Arctic region and \cite{toyota2011size} for the Antarctic region). 
The mass of the $i$-th floe is $m^i=\rho_{ice}\pi (r^i)^2 h^i$, where the constant $\rho_{ice}$ is the density of sea ice floes. The floe position is denoted by $\bfx^i=(x^i,y^i)^T$ and the floe velocity is $\bfv^i=(u^i,v^i)^T$. 
Lastly, let $\bfu = \bfu(x,y)$ be the given ocean surface velocity.
The governing equations of non-rotating-colliding sea ice floe dynamics are given by Newton's equations:
\begin{align} 
\begin{aligned} \label{eq:dem}
  \frac{\d\bfx^i}{\d t} &= \bfv^i, \quad i \in [n], \\
  m^i\frac{\d\bfv^i}{\d t} & = \frac{1}{n} \sum_{j=1}^{n} \bff_c^{ij}  + \alpha^i\left(\bfu^i -\bfv^i\right)\left| \bfu^i -\bfv^i\right| =: \bfF^i, 
\end{aligned}
\end{align}  
where $|\bfu^i -\bfv^i|$ is the $\ell^2$-norm of $\bfu^i -\bfv^i$ in \rev{${\mathbb R}^2$},  $\alpha^i  = \pi \rho_o (2 C_{v,o} r^i \cdot D^i + C_{h,o}(r^i)^2)$ is the drag coefficient and $\bff_c^{ij}$ (in default, $i\ne j$; we assume $\bff_c^{ii}=0$ for notational simplicity) specifies the contact force. 
$\rho_o$ is ocean density, $D^i$ is the ice-floe draft (the part of the ice floe below the water surface), $C_{v,o}$ is ocean vertical drag coefficients, and $C_{h,o}$ is ocean horizontal drag coefficients. By default, we set $D^i = 0.9 h^i.$
The equations \eqref{eq:dem} describe the dynamics of each flow particle. 
They are the equations of motion following Newton’s law.
We scale the summation of the contact force by the factor of $1/n$ to apply the mean-field theory for deriving the kinetic and hydrodynamic models.
For a system with a fixed number of floes, this scaling factor is understood as a factor of the mass and drag coefficient.
Herein, for simplicity, we consider non-rotating ice floes and assume normal contact forces.
The floe contact force is nonzero when two floes are in contact, i.e., when
$$
 \delta^{ij} := |\bfx^i-\bfx^j| - (r^i+r^j) < 0, \quad i\ne j, \quad i,j \in [n].
$$
We follow the Hertz contact theory \cite{hertz1882ueber,puttock1969elastic} and adopt a simple model for the normal contact force (see the supplementary material of \cite{herman2016discrete}):
\begin{align} \label{eq:cfn}
    \bff_c^{ij} := \Big( \kappa^{ij}_1 \delta^{ij} + \kappa^{ij}_2 (\bfv^i  - \bfv^j )\cdot\bfn^{ij} \Big) \bfn^{ij},  \quad \kappa^{ij}_1 := \frac{\pi}{4} E_e h^{ij}_e, \quad \kappa^{ij}_2 := \beta \sqrt{ \frac{5\pi}{4} E_e h^{ij}_e m^{ij}_e}, 
\end{align}
where $\bfn^{ij} :=  \frac{\bfx^j-\bfx^i}{|\bfx^j-\bfx^i|}$ is the unit normal vector pointing from the center of floe $i$ to floe $j$, $E_e$ is the effective contact modulus, $h^{ij}_e := \min\{h^i, h^j\}$ is the effective contact thickness (part of the thickness in contact), $m^{ij}_e := \frac{m^i m^j}{m^i + m^j}$ is the effective mass, and
\begin{equation} \label{eq:beta}
\beta := \frac{\ln e_r}{\sqrt{\ln^2 e_r + \pi^2} } < 0.
\end{equation}
Here $e_r$ is the restitution coefficient in $[0, 1)$ and system parameters $\kappa^{ij}_1, \kappa^{ij}_2, h^{ij}_e$ and $m^{ij}_e$ depend on the pairs of floe $i$ and $j$ and $E_e$ is constant for all pairs. 
 Details are referred to equations (1), (8), (20), and (26) in the supplementary material of \cite{herman2016discrete}.

\begin{remark} %\label{R2.1}
The floe contact model is, in general, more complicated than \eqref{eq:cfn}. 
Herein, we consider a simple model to initiate the derivation of kinetic and hydrodynamic descriptions. 
The restitution coefficient for ice floes is the measure of kinetic energy loss during floe collisions. The coefficient is usually a positive real number between 0 and 1. A value of 0 indicates a perfectly inelastic collision, while a value of 1 indicates a perfectly elastic collision. The typical values of $e_r$  for sea ice are between 0.1 and 0.3; see \cite{li2020laboratory}. In ice floe studies, values between 0 and 1 are often employed; see, for example \cite{herman2019wave}.
\end{remark}

\begin{remark} 
\rev{
The present model \eqref{eq:dem} is an idealized floe particle model that captures the main feature of floe--floe collisions and ocean drag forces. Environmental effects can be included. For example, atmospheric wind forcing can be modeled as an additional drag term acting on the floes, with direction and magnitude determined by wind fields \cite{damsgaard2018application, herman2016discrete}. Thermodynamic processes such as melting or freezing can be incorporated through source terms that modify floe thickness and size distributions \cite{thorndike1975thickness,toppaladoddi2015theory,roach2018emergent}. These extensions would enable the model to capture the influence of changing climatic conditions on floe evolution and sea ice rheology. Herein, including these extensions will make the model much more complicated, and we consider this idealized model for the simplicity of the rigorous development of particle-to-continuum models. The development of the extensions is subject to future work.
}
\end{remark}

\subsection{The LaSalle invariance principle} %\label{sec:2.2}
In this subsection, we recall the LaSalle invariance principle \cite{khalil2002nonlinear,haddad2008nonlinear} to be used in later sections.  
Consider the following Cauchy problem for the continuous dynamical system on $\bbr^d$:
\begin{equation} \label{Cauchy}
\begin{cases}
{\dot \bfx} = \boldsymbol{F}(\bfx), \quad t \in {\mathbb R}_+ :=(0, \infty), \\
\bfx \Big|_{t = 0} = \bfx_0.
\end{cases}
\end{equation}
\rev{Herein, we abuse the notation $\bfx \in \mathbb{R}^d$ for generality.} In the sequel, we denote the solution for the Cauchy problem \eqref{Cauchy} by $\varphi_t(\bfx_0)$, and we call $\{ \varphi_t(\bfx_0) \}$ the orbit issued from $\bfx_0$. Next, we introduce the definition of $\omega$-limit set and recall its basic properties.
\begin{definition} %\label{D2.2}
The $\omega$-limit set of $\bfx_0$, denoted by $\omega(\bfx_0)$, is defined as the collection of all limit(accumulation) points of the orbit $\{ \varphi_t(\bfx_0) \}_{t \geq 0}$:
	\[\omega(\bfx_0):=\left\{\bfx \in \bbr^d:~\exists~\{t_n\}_{n\geq 1}\quad\mbox{such that}\quad\lim_{n \to \infty}t_n=\infty\quad\mbox{and}\quad \lim_{n \to \infty} \varphi_{t_n}(\bfx_0)=\bfx
	\right\}. \]
\end{definition}
Then, it is well known that the $\omega$-limit set is closed, positively invariant, and connected. If the orbit $\{ \varphi_t(\bfx_0) \}$ is uniformly bounded in $t$, then it must have a limit point by the Bolzano-Weierstrass theorem. Thus, $\omega(\bfx_0)$ is nonempty.  

Now, we are ready to state the LaSalle invariance principle, which asserts the asymptotic stability of an equilibrium point to \eqref{Cauchy}.  
\begin{proposition}\label{prop:lsip}
	Suppose that the vector field $\boldsymbol{F}$ and a nonempty open set $\Omega$ satisfy the following conditions:
	\begin{enumerate}
		\item
		$\boldsymbol{F}$ is locally Lipschitz continuous vector field on $\bbr^d$.
		\item
		Let $L:\Omega \to \mathbb{R}$ be a continuously differentiable functional such that its orbital derivative along the flow generated by \eqref{Cauchy} is nonpositive:
		\[   {\dot L}(\bfx) =\nabla L \cdot F(\bfx) \leq 0 \quad \mbox{for all $\bfx \in \Omega$}. \]
		\item
		${\omega(\bfx)}$ is bounded and contained in $\Omega$.
	\end{enumerate}
	Then, the trajectory $\phi_t(\bfx_0)$ approaches to the largest invariant set contained in $\{\bfx \in \bbr^d:~\dot{L}(\bfx)=0 \}$.
\end{proposition}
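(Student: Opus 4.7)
The plan is to execute the standard three-step LaSalle argument: (i) show $L$ is non-increasing along the forward orbit and has a finite limit, (ii) use this monotonicity plus continuity to prove $L$ is constant on $\omega(\bfx_0)$, and (iii) promote this pointwise identity to the conclusion by invoking positive invariance of $\omega(\bfx_0)$.

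First, because the orbit is bounded (its $\omega$-limit set is bounded in $\bbr^d$ and the vector field $\boldsymbol{F}$ is locally Lipschitz, so the orbit is defined for all $t\ge 0$) and $\omega(\bfx_0)\subset\Omega$, one checks that $\varphi_t(\bfx_0)\in\Omega$ for all sufficiently large $t$. On this time interval the chain rule gives $\tfrac{d}{dt}L(\varphi_t(\bfx_0))=\dot{L}(\varphi_t(\bfx_0))\le 0$, so $t\mapsto L(\varphi_t(\bfx_0))$ is eventually non-increasing; combined with continuity of $L$ on the compact closure of the orbit in $\Omega$, it is bounded below, hence converges to a finite limit $L^{*}:=\lim_{t\to\infty}L(\varphi_t(\bfx_0))$. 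Next, for any $\bfx^{*}\in\omega(\bfx_0)$, pick a sequence $t_n\to\infty$ with $\varphi_{t_n}(\bfx_0)\to\bfx^{*}$; continuity of $L$ at $\bfx^{*}\in\Omega$ yields $L(\bfx^{*})=\lim_n L(\varphi_{t_n}(\bfx_0))=L^{*}$, so $L\equiv L^{*}$ on $\omega(\bfx_0)$.

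Second, I would invoke the positive invariance of $\omega(\bfx_0)$, a standard consequence of continuous dependence on initial data afforded by the local Lipschitz assumption on $\boldsymbol{F}$. For any $\bfx^{*}\in\omega(\bfx_0)$ and any $s\ge 0$, the point $\varphi_s(\bfx^{*})$ again lies in $\omega(\bfx_0)\subset\Omega$, so $L(\varphi_s(\bfx^{*}))=L^{*}$. Differentiating in $s$ at $s=0$ gives $\dot{L}(\bfx^{*})=\nabla L(\bfx^{*})\cdot \boldsymbol{F}(\bfx^{*})=0$. Therefore $\omega(\bfx_0)\subset\{\bfx\in\bbr^d:\dot{L}(\bfx)=0\}$, and since $\omega(\bfx_0)$ is itself positively invariant, it is contained in the largest invariant subset of that level set. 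The convergence statement $\mathrm{dist}(\varphi_t(\bfx_0),\omega(\bfx_0))\to 0$ then follows by a standard contradiction: if not, some subsequence $\varphi_{t_n}(\bfx_0)$ stays at distance $\ge\varepsilon$ from $\omega(\bfx_0)$, yet Bolzano--Weierstrass extracts a further convergent subsequence whose limit belongs to $\omega(\bfx_0)$ by definition — impossible.

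The main obstacle is the positive invariance step: one must verify that for $\bfx^{*}\in\omega(\bfx_0)$ with $\varphi_{t_n}(\bfx_0)\to \bfx^{*}$, the shifted sequence $\varphi_{t_n+s}(\bfx_0)=\varphi_s(\varphi_{t_n}(\bfx_0))$ converges to $\varphi_s(\bfx^{*})$, which requires continuity of the flow map in the initial condition on a neighborhood that remains inside the domain of definition. This is where the local Lipschitz hypothesis on $\boldsymbol{F}$ is essential, and where one must carefully use the containment $\omega(\bfx_0)\subset\Omega$ together with boundedness to ensure the flow is well-defined and jointly continuous on a suitable tube around the tail of the orbit. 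Once this invariance is in hand, the remaining arguments are topological and routine.
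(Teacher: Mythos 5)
The paper does not prove Proposition \ref{prop:lsip} at all: it is stated as a recalled classical result with citations to Khalil and to Haddad--Chellaboina, and is then used as a black box in Theorem \ref{thm:v0}. Your argument is the standard textbook proof of LaSalle's principle and is essentially sound: monotonicity and boundedness of $L$ along the tail of the orbit give a limit $L^{*}$, continuity forces $L\equiv L^{*}$ on $\omega(\bfx_0)$, positive invariance of $\omega(\bfx_0)$ (via continuous dependence on initial data, which the local Lipschitz hypothesis supplies) turns this into $\dot{L}=0$ on $\omega(\bfx_0)$, and the attraction of the trajectory to $\omega(\bfx_0)$ follows by the usual compactness contradiction. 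The one genuine slip is your opening parenthetical: boundedness of the orbit does \emph{not} follow from boundedness of $\omega(\bfx_0)$ --- an unbounded forward orbit can still have a bounded, nonempty $\omega$-limit set, and both your ``eventually in $\Omega$'' step and the final Bolzano--Weierstrass contradiction require precompactness of the forward orbit itself. The clean fix is to state (as Khalil does) that the forward orbit lies in a compact set, from which nonemptiness, compactness, connectedness and invariance of $\omega(\bfx_0)$ all follow; with that hypothesis made explicit, the rest of your proof goes through verbatim.
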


\subsection{Asymptotic behavior} \label{sec:pmodelab}
In this subsection, we study the asymptotic behavior of the particle model  \eqref{eq:dem} using the energy estimate and the LaSalle invariance principle in Proposition \ref{prop:lsip}. We assume that the mass of each floe does not change over time (thus, no melting, freezing, or fracturing).  First, we define the floe moments: 
\begin{equation} \label{eq:Ms}
\begin{aligned}
    M_0 & = \sum_{i=1}^n m^i, \qquad \bfM_1 = \sum_{i=1}^n m^i \bfv^i, \qquad M_2 = M_{2,\bfv} + M_{2,\bfx}, \\
    M_{2,\bfv} &= \frac{1}{2} \sum_{i=1}^n m^i | \bfv^i|^2, \qquad M_{2,\bfx} = \frac{1}{4n}\sum_{i,j=1}^n \kappa^{ij}_1  (\delta^{ij})^2, 
\end{aligned}
\end{equation}
where $M_{2,\bfx}$,  $M_{2,\bfv}$, and  $M_2$ represent total normal strain energy (the fraction $\frac{1}{4}$ is due to the repetition), total translational kinetic energy,
and total energy, respectively. By definition, $\kappa^{ij}_1$ depends on the thickness of the floe pairs; thus, $\kappa^{ij}_1$ is not a uniform constant and is not factored out of the summation in $M_{2,\bfx}$.

\begin{lemma} \label{lem:sumparticle}
\emph{(Propagation of velocity moments)} 
Let $(\bfx^i, \bfv^i)$ be a global solution to the system \eqref{eq:dem}. Then, the following assertions hold.
\begin{enumerate}
\item
The first and second velocity moments satisfy 
\begin{align}
\begin{aligned} \label{eq:M1M2}
&(i)~\frac{\d\bfM_1}{\d t}  = \sum_{i=1}^{n} \alpha^i\left(\bfu^i -\bfv^i\right)\left| \bfu^i -\bfv^i\right|. \\
&(ii)~\frac{\d M_2}{\d t}  =  \frac{1}{2n} \sum_{i, j=1}^n \kappa^{ij}_2 |(\bfv^i - \bfv^j )\cdot\bfn^{ij} |^2 +  \sum_{i=1}^{n} \alpha^i\left(\bfu^i -\bfv^i\right)\left| \bfu^i -\bfv^i\right| \cdot \bfv^i.
\end{aligned}
\end{align}
\item
If $\alpha^i  = 0, \forall~i \in [n],$  then there exist positive constants $A_0$ and $A_1$ such that 
\[ M_2(t)  \ge M_{2}(0) e^{-A_0 t} + \frac{A_1}{A_0} |\bfM_1(0)|^2  \Big( 1 - e^{-A_0 t} \Big). \]
\end{enumerate}
\end{lemma}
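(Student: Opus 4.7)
The plan is to prove (i) by a pairwise-antisymmetry bookkeeping of the contact forces, and (ii) by combining the resulting momentum conservation with a Grönwall-type differential inequality.

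For (i)-(i), I first note that the contact-force parameters satisfy $\kappa_1^{ij}=\kappa_1^{ji}$, $\kappa_2^{ij}=\kappa_2^{ji}$, $\delta^{ij}=\delta^{ji}$, while $\bfn^{ji}=-\bfn^{ij}$, so that \eqref{eq:cfn} yields $\bff_c^{ji}=-\bff_c^{ij}$. Summing Newton's law $m^i \dot{\bfv}^i=\bfF^i$ over $i$ then kills the double sum $\sum_{i,j}\bff_c^{ij}$ pairwise, leaving exactly the drag contribution on the right-hand side. For (i)-(ii), write $M_2=M_{2,\bfv}+M_{2,\bfx}$ and differentiate each part. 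In $\dot M_{2,\bfv}=\sum_i \bfv^i\cdot\bfF^i$, the contact-force double sum is symmetrized via the same antisymmetry into $\tfrac{1}{2n}\sum_{i,j}(\bfv^i-\bfv^j)\cdot\bff_c^{ij}$; substituting \eqref{eq:cfn} splits this into a ``spring'' piece proportional to $\kappa_1^{ij}\delta^{ij}(\bfv^i-\bfv^j)\cdot\bfn^{ij}$ and a ``damping'' piece $\kappa_2^{ij}\bigl|(\bfv^i-\bfv^j)\cdot\bfn^{ij}\bigr|^2$. Differentiating $(\delta^{ij})^2$ via the chain rule and the identity $\dot\delta^{ij}=-(\bfv^i-\bfv^j)\cdot\bfn^{ij}$ (from $\tfrac{d}{dt}|\bfx^i-\bfx^j|$) shows that $\dot M_{2,\bfx}$ is exactly minus the spring piece; adding gives (i)-(ii).

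For (ii), momentum conservation from (i)-(i) with $\alpha^i\equiv 0$ gives $\bfM_1(t)\equiv \bfM_1(0)$. Introduce the weighted centroid $\bar\bfv:=\bfM_1/M_0$ and use the identity $2M_{2,\bfv}=\sum_i m^i |\bfv^i-\bar\bfv|^2 + |\bfM_1|^2/M_0$. Combining the elementary bounds $\bigl|(\bfv^i-\bfv^j)\cdot\bfn^{ij}\bigr|^2\le |\bfv^i-\bfv^j|^2\le 2\bigl(|\bfv^i-\bar\bfv|^2+|\bfv^j-\bar\bfv|^2\bigr)$ with uniform constants $\bar\kappa_2:=\max_{i,j}|\kappa_2^{ij}|$ and $m_{\min}:=\min_i m^i>0$ (inherited from bounded floe sizes and thicknesses), and using $M_{2,\bfx}\ge 0$, one obtains $\dot M_2\ge -A_0\bigl(M_2-|\bfM_1(0)|^2/(2M_0)\bigr)$ with, for instance, $A_0:=4\bar\kappa_2/m_{\min}$. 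Setting $A_1:=A_0/(2M_0)$ and integrating against the factor $e^{A_0 t}$ delivers the stated bound.

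I expect the main obstacle to be the cancellation in (i)-(ii): one has to symmetrize $\sum_{i,j}\bfv^i\cdot\bff_c^{ij}$ via the pairwise antisymmetry \emph{before} recognizing that the spring piece is exactly $-\dot M_{2,\bfx}$, otherwise the two contributions do not appear in comparable form. A secondary subtlety in (ii) is that the naive estimate $\dot M_2\ge -\mathrm{const}\cdot M_2$ only produces the decaying term $M_2(0)e^{-A_0t}$; to extract the sharper inequality containing $|\bfM_1(0)|^2$ one must apply the weighted-centroid identity to pull the conserved-momentum contribution out of $M_{2,\bfv}$ before invoking the pairwise inequalities.
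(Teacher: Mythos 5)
Your proposal is correct, and for part (1) it is essentially identical to the paper's proof: the same antisymmetry $\bff_c^{ji}=-\bff_c^{ij}$ kills the double sum for $\dot{\bfM}_1$, and the same symmetrization plus the identity $\dot\delta^{ij}=-(\bfv^i-\bfv^j)\cdot\bfn^{ij}$ (the paper's \eqref{eq:dddt}) identifies the spring piece with $-\dot M_{2,\bfx}$; whether one substitutes \eqref{eq:cfn} before or after symmetrizing is immaterial. The only genuine divergence is in part (2). The paper bounds $\sum_{i,j}\bigl|(\bfv^i-\bfv^j)\cdot\bfn^{ij}\bigr|^2$ by expanding $\sum_{i,j}|\bfv^i-\bfv^j|^2 = 2n\sum_i|\bfv^i|^2-2\bigl|\sum_i\bfv^i\bigr|^2$ and then estimating the \emph{unweighted} sum $\bigl|\sum_i\bfv^i\bigr|^2$ from below by $M_0^{-2}|\bfM_1(0)|^2$, whereas you pass through the weighted centroid $\bar\bfv=\bfM_1/M_0$ and the identity $2M_{2,\bfv}=\sum_i m^i|\bfv^i-\bar\bfv|^2+|\bfM_1|^2/M_0$ before applying the pairwise inequality. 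Both routes land on the same differential inequality $\dot M_2\ge -A_0M_2+A_1|\bfM_1(0)|^2$ and finish with Gr\"onwall, only with different admissible constants. Your centroid route is arguably the more robust of the two: it extracts the conserved-momentum contribution exactly, rather than relying on a lower bound relating the unweighted velocity sum to the mass-weighted one, which is the delicate point in the paper's version of the estimate. Nothing is missing from your argument; as in the paper, you implicitly restrict the contact terms to pairs with $\delta^{ij}<0$, which is the same level of bookkeeping the authors adopt.
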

\begin{proof}
(i)~For the first assertion, we first note that the relations 
\[ \delta^{ij} = \delta^{ji}, \quad  \bfn^{ij} = -\bfn^{ji}, \quad i, j \in [n], \quad i\ne j\]
to see the anti-symmetry of floe-floe contact force:
\begin{equation} \label{eq:cfij}
\bff_c^{ij} = -\bff_c^{ji}, \quad i, j \in [n].
 \end{equation}
Now, we use \eqref{eq:dem},  \eqref{eq:Ms}, and  \eqref{eq:cfij} to find 
\begin{align*}
\begin{aligned}
    \frac{\d\bfM_1}{\d t} & = \frac{\d}{\d t} \sum_{i=1}^n m^i \bfv^i  = \sum_{i=1}^n m^i \frac{\d \bfv^i}{\d t}  =  \sum_{i=1}^n \Big( \frac{1}{n} \sum_{j=1}^{n} \bff_c^{ij} + \alpha^i\left(\bfu^i -\bfv^i\right)\left| \bfu^i -\bfv^i\right| \Big) \\
    &= \frac{1}{n} \sum_{i, j=1}^{n} \bff_c^{ij}  + \sum_{i=1}^{n} \alpha^i\left(\bfu^i -\bfv^i\right)\left| \bfu^i -\bfv^i\right| =\sum_{i=1}^{n} \alpha^i\left(\bfu^i -\bfv^i\right)\left| \bfu^i -\bfv^i\right|. 
\end{aligned}
\end{align*}
For the second assertion, we take an inner product of $ \eqref{eq:dem}_2$ with $\bfv^i$ and sum up the resulting equations over all $i \in [n]$ to get 
\begin{align}
\begin{aligned} \label{eq:p-1}
    \frac{\d M_{2,\bfv}}{\d t} & = \sum_{i=1}^n m^i\frac{\d\bfv^i}{\d t} \cdot \bfv^i =  \sum_{i=1}^n \Big( \frac{1}{n}\sum_{j=1}^{n} \bff_c^{ij}  + \alpha^i\left(\bfu^i -\bfv^i\right)\left| \bfu^i -\bfv^i\right| \Big) \cdot \bfv^i   \\
    & = \frac{1}{n}  \sum_{i, j=1}^n  \Big( \kappa^{ij}_1 \delta^{ij} + \kappa_2 (\bfv^i  - \bfv^j )\cdot\bfn^{ij} \Big) \bfn^{ij} \cdot \bfv^i  + \sum_{i=1}^{n} \alpha^i\left(\bfu^i -\bfv^i\right)\left| \bfu^i -\bfv^i\right| \cdot \bfv^i  \\
    & =:  {\mathcal I}_{11} + {\mathcal I}_{12} +   \sum_{i=1}^{n} \alpha^i\left(\bfu^i -\bfv^i\right)\left| \bfu^i -\bfv^i\right| \cdot \bfv^i.
\end{aligned}
\end{align}
Below, we estimate the terms ${\mathcal I}_{1i},.~i=1,2$ one by one. \newline

\noindent $\bullet$~Case A.1:~We use $ \eqref{eq:dem}_1$ and the relations:
\[ \delta^{ij} = \delta^{ji}, \quad  \bfn^{ij} = -\bfn^{ji}  \]
to rewrite
\begin{align}
\begin{aligned} \label{eq:p-2}
    {\mathcal I}_{11} & =  \frac{1}{n} \sum_{i, j=1}^n \kappa^{ij}_1 \delta^{ij}\bfn^{ij} \cdot \frac{\d\bfx^i}{\d t}  = \frac{1}{n} \sum_{i, j=1}^n \kappa^{ij}_1 \Big(|\bfx^i-\bfx^j|- (r^i+r^j)\Big)  \bfn^{ij} \cdot \frac{\d\bfx^i}{\d t}   \\
    & = - \frac{1}{n} \sum_{i, j=1}^n \kappa^{ij}_1 \Big(|\bfx^i-\bfx^j|- (r^i+r^j)\Big)  \bfn^{ij} \cdot \frac{\d\bfx^j}{\d t}   \\
    & = - \frac{1}{2n} \sum_{i, j=1}^n \kappa^{ij}_1 \Big(|\bfx^i-\bfx^j|- (r^i+r^j) \Big)  \bfn^{ij} \cdot \frac{\d (\bfx^j - \bfx^i)}{\d t}  \\
    & = - \frac{1}{2n} \sum_{i, j=1}^n \kappa^{ij}_1 \delta^{ij} \cdot \frac{\d \delta^{ij}}{\d t} = - \frac{\d M_{2,\bfx}}{\d t},   
\end{aligned}
\end{align}
where we used the following identity: 
\begin{align}
\begin{aligned} \label{eq:dddt}
    \frac{\d \delta^{ij}}{\d t} & = \frac{\d}{\d t} \big(|\bfx^i-\bfx^j|- (r^i+r^j)\big)  = \frac{\d}{\d t}  \sqrt{(x^j - x^i)^2 + (y^j - y^i)^2}  \\
    & = \frac{2(x^j - x^i) \d (x^j - x^i) + 2(y^j - y^i) \d (y^j - y^i) }{2 \sqrt{(x^j - x^i)^2 + (y^j - y^i)^2} \d t} = \bfn^{ij} \cdot \frac{\d (\bfx^j - \bfx^i)}{\d t}.   
\end{aligned}
\end{align}

\vspace{0.2cm}

\noindent $\bullet$~Case A.2:  Similarly, we have
\begin{align}
\begin{aligned} \label{eq:p-3}
    {\mathcal I}_{12} &  =  \frac{1}{n} \sum_{i, j=1}^n \kappa^{ij}_2  \Big(( (\bfv^i - \bfv^j )\cdot\bfn^{ij}) \bfn^{ij}  \Big) \cdot \bfv^i = - \frac{1}{n} \sum_{i, j=1}^n \kappa^{ij}_2  \Big( ( (\bfv^i - \bfv^j )\cdot\bfn^{ij} ) \bfn^{ij} \Big) \cdot \bfv^j   \\
    & =  \frac{1}{2n}  \sum_{i, j=1}^n \kappa^{ij}_2  \Big( ((\bfv^i - \bfv^j )\cdot\bfn^{ij} \bfn^{ij} ) \Big) \cdot (\bfv^i - \bfv^j ) =  \frac{1}{2n} \sum_{i, j=1}^n \kappa^{ij}_2   \Big( (\bfv^i - \bfv^j )\cdot\bfn^{ij} \Big)^2. 
\end{aligned}   
\end{align}
In \eqref{eq:p-1}, we combine \eqref{eq:p-2} and \eqref{eq:p-3} to arrive at the desired estimates. 

\vspace{0.2cm}

\noindent (ii)~Suppose that 
\[  \alpha^i=0, \quad  \forall~i \in [n]. \]
Then, the relation $\eqref{eq:M1M2}$ reduces to 
\begin{equation} \label{eq:p-3-1} 
\frac{\d\bfM_1}{\d t} = 0, \quad \frac{\d M_2}{\d t}  =  \frac{1}{2n} \sum_{i, j=1}^n \kappa^{ij}_2  \big( (\bfv^j - \bfv^i )\cdot\bfn^{ij} \big)^2.  
\end{equation}
On the other hand, by $\eqref{eq:cfn}$ and $\beta<0$ as in \eqref{eq:beta}, we have 
\[ \kappa^{ij}_2 = \beta \sqrt{ \frac{5\pi}{4} E_e h^{ij}_e m^{ij}_e} <0. \]
Together with \eqref{eq:p-3-1}, this implies
\[
\frac{\d M_2}{\d t} \leq 0, \quad \mbox{i.e., total energy is dissipative.}
\]
Now, we use the Cauchy–Schwarz inequality and the boundedness of the mass of floe particles to see that there exists a positive $A_0$ as the floe particle radii, thickness, and masses are fixed for a fixed system with $n$ floes (recalling $\kappa^{ij}_2<0$ for all pairs of colliding floes) such that 
\begin{align}
\begin{aligned} \label{eq:p-3-2}
\frac{\d M_2}{\d t}  &= -\frac{1}{2n} \sum_{i, j=1}^n |\kappa^{ij}_2|  \big( (\bfv^j - \bfv^i )\cdot\bfn^{ij} \big)^2 \\
& \geq - \frac{1}{2n} \Big( \max_{i,j}  |\kappa^{ij}_2| \Big)  \sum_{i, j=1}^n |\bfv^j - \bfv^i |^2 \\
& = - \frac{1}{2n} \Big( \max_{i,j}  |\kappa^{ij}_2| \Big)  \sum_{i, j=1}^n \Big( |\bfv^j |^2 + |\bfv^i |^2 - 2  \bfv^i \cdot \bfv^j \Big)  \\
&= - \frac{1}{2n} \Big( \max_{i,j}  |\kappa^{ij}_2| \Big) \Big( 2n  \sum_{i=1}^n  |\bfv^i |^2 - 2 \Big| \sum_{i=1}^n   \bfv^i \Big|^2 \Big )  \\
&= -\Big( \max_{i,j}  |\kappa^{ij}_2| \Big)  \sum_{i=1}^n  |\bfv^i |^2 +   \frac{1}{n} \Big( \max_{i,j}  |\kappa^{ij}_2| \Big) \Big| \sum_{i=1}^n   \bfv^i \Big|^2  \\
& = -\Big( \max_{i,j}  |\kappa^{ij}_2| \Big) {\mathcal I}_{21} +   \frac{1}{n} \Big( \max_{i,j}  |\kappa^{ij}_2| \Big) {\mathcal I}_{22}.
\end{aligned}
\end{align}
%where we used the relation:
%\[
% \sum_{i, j=1}^n |\bfv^j - \bfv^i|^2 =  \sum_{i, j=1}^n \Big( |\bfv^j|^2 + | \bfv^i|^2 - 2 \bfv^j  \cdot  \bfv^i \Big) = 2 n 
%\]
Next, we estimate the terms ${\mathcal I}_{2i}, i=1,2,$ one by one. \newline

\noindent $\bullet$~(Estimate of ${\mathcal I}_{21}$): Note that 
\begin{align}
\begin{aligned} \label{eq:p-3-3}
{\mathcal I}_{21} &=  \sum_{i=1}^n  |\bfv^i |^2 =  \sum_{i=1}^n \frac{m^i}{m^i}  |\bfv^i |^2   \leq \frac{2}{\min_{i} m^i} \Big( \frac{1}{2} \sum_{i=1}^n  m^i  |\bfv^i |^2 \Big)  = \frac{2}{\min_{i} m^i} M_{2,\bfv} \\
& \leq \frac{2}{\min_{i} m^i} M_{2}.
\end{aligned}
\end{align}
\vspace{0.1cm}

\noindent $\bullet$~(Estimate of ${\mathcal I}_{22}$): We use \eqref{eq:p-3-1} to see
\begin{equation} \label{eq:p-3-4}
 {\mathcal I}_{22} = \Big| \sum_{i=1}^n  \frac{m^i}{m^i}   \bfv^i \Big|^2 \geq \frac{1}{M_0^2} \Big| \sum_{i=1}^n m^i  \bfv^i  \Big|^2 = \frac{1}{M_0^2}  |\bfM_1(0)|^2.
\end{equation}
We combine \eqref{eq:p-3-2}, \eqref{eq:p-3-3} and \eqref{eq:p-3-4} to obtain
\begin{align*}
\begin{aligned}
\frac{\d M_2}{\d t} &\geq  -\frac{2\Big( \max_{i,j}  |\kappa^{ij}_2| \Big)}{\min_{i} m^i} M_{2} + \frac{1}{n M_0^2} \Big( \max_{i,j}  |\kappa^{ij}_2| \Big)  |\bfM_1(0)|^2  \\
&=: -A_0 M_2 + A_1 |\bfM_1(0)|^2.
\end{aligned}
\end{align*}
\rev{Then,} Grönwall's lemma \cite{gronwall1919note} yields the desired estimate. 
\end{proof}

\begin{remark}
    Similar to the result \cite{ha2008Kinetic} for the Cucker-Smale particle model, if the drag force is absent (i.e., $\alpha^i=0$), Proposition \ref{lem:sumparticle} tells that the total energy $M_2$ is monotonically decreasing with a lower bound. 
\end{remark}

\begin{theorem} \label{thm:v0} Suppose that the given ocean surface velocity is constant:
\[  \bfu^i = \bfu^{\infty}:~\mbox{constant}, \quad \forall~i \in [n], \]
and let $(\bfx^i, \bfv^i)$ be a global solution to the system \eqref{eq:dem}. Then, we have
\[ \lim_{t \to \infty} \|  \bfv^i  -  \bfu^{\infty} \| =0, \quad \forall~i \in [n].  \]
\end{theorem}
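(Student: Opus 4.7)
The plan is to exploit the structure uncovered in Lemma~\ref{lem:sumparticle}(ii) by repeating the energy computation there in the co-moving frame with velocity $\bfu^\infty$, so that the ocean drag becomes a sign-definite dissipation in the fluctuations $\bfv^i-\bfu^\infty$, and then to invoke the LaSalle invariance principle (Proposition~\ref{prop:lsip}).

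First I would introduce the shifted Lyapunov candidate
\[
L := \frac{1}{2}\sum_{i=1}^n m^i\,|\bfv^i - \bfu^\infty|^2 + \frac{1}{4n}\sum_{i,j=1}^n \kappa^{ij}_1\,(\delta^{ij})^2,
\]
and differentiate along the flow \eqref{eq:dem}. Because $\bfu^\infty$ is constant, the $\bfu^\infty$-piece of the cross term $\frac{1}{n}\sum_{i,j}(\bfv^i-\bfu^\infty)\cdot\bff_c^{ij}$ drops out by the anti-symmetry $\bff_c^{ij}=-\bff_c^{ji}$ in \eqref{eq:cfij}, while the rest of the contact-force manipulation is formally identical to Cases A.1--A.2 in the proof of Lemma~\ref{lem:sumparticle}; the $\kappa^{ij}_1$-contribution then cancels $\d M_{2,\bfx}/\d t$ exactly. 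The drag contribution now lands on $\bfv^i-\bfu^\infty$ on both sides of the inner product and becomes a cubic sink, so that
\[
\frac{\d L}{\d t} \;=\; \frac{1}{2n}\sum_{i,j=1}^n \kappa^{ij}_2\bigl((\bfv^i-\bfv^j)\cdot\bfn^{ij}\bigr)^2 \;-\; \sum_{i=1}^n \alpha^i\,|\bfv^i-\bfu^\infty|^3 \;\le\; 0,
\]
using $\kappa^{ij}_2<0$ (cf.~\eqref{eq:beta}) and $\alpha^i>0$. In particular the fluctuation velocities are globally bounded by $L(0)$, and the strain-energy control keeps $|\delta^{ij}|$ bounded on every pair actually in contact.

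Next I would pass to Galilean coordinates $\tilde\bfx^i := \bfx^i - t\bfu^\infty$ and $\bfv^i - \bfu^\infty$, in which the system becomes autonomous and depends on positions only through the relative differences $\tilde\bfx^i - \tilde\bfx^j$. Applying Proposition~\ref{prop:lsip} with $L$ on this reduced state, any point in $\{\dot L = 0\}$ must have every cubic drag term vanishing, which, since $\alpha^i>0$, forces $\bfv^i = \bfu^\infty$ for all $i\in[n]$; the largest invariant subset therefore lies in $\{\bfv^i=\bfu^\infty\ \forall i\}$, and LaSalle delivers the claimed convergence.

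The main obstacle I anticipate is the compactness hypothesis of Proposition~\ref{prop:lsip}: the physical positions $\bfx^i$ drift to infinity at speed $\bfu^\infty$, so the bounded $\omega$-limit set only materializes after the Galilean reduction above, where I must justify that the reduced trajectory of the dynamically relevant quantities $(\tilde\bfx^i-\tilde\bfx^j,\bfv^i-\bfu^\infty)$ is precompact using the $L$-bound on fluctuations, the strain-energy bound on contacting pairs, and the vanishing of $\bff_c^{ij}$ for non-overlapping pairs. Should this reduction become technically awkward, a clean backup is Barbalat's lemma: integrating the dissipation inequality gives $|\bfv^i-\bfu^\infty|\in L^3(\mathbb{R}_+)$, and uniform continuity of $t\mapsto|\bfv^i(t)-\bfu^\infty|$, inherited from the uniform boundedness of $\d\bfv^i/\d t$ supplied by the same global bounds, is enough to drive this quantity to $0$ directly.
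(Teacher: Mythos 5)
Your proposal is correct and follows essentially the same route as the paper: the paper invokes translational (Galilean) invariance to reduce to $\bfu^{\infty}=0$, at which point your shifted functional $L$ is exactly the total energy $M_2$ of Lemma~\ref{lem:sumparticle}, the dissipation identity $\dot L = \frac{1}{2n}\sum_{i,j}\kappa^{ij}_2((\bfv^i-\bfv^j)\cdot\bfn^{ij})^2-\sum_i\alpha^i|\bfv^i|^3\le 0$ is the same, and the conclusion is drawn from the LaSalle principle with $\{\dot L=0\}=\{\bfv^i=0\ \forall i\}$. The one place you go beyond the paper is in worrying about hypothesis (3) of Proposition~\ref{prop:lsip}: the paper applies LaSalle on the full state space $\bbr^{4n}$ without verifying boundedness of the positional part of the orbit, whereas you correctly flag that only the velocities and strain energies are controlled by $L$. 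Your Barbalat fallback is the cleanest way to close this: $\sum_i\alpha^i|\bfv^i|^3\in L^1(\bbr_+)$ from integrating $\dot L$, and $\dot\bfv^i$ is uniformly bounded because the energy bound controls $|\bfv^i|$ while $\delta^{ij}\ge -(r^i+r^j)$ keeps the contact force bounded, so $|\bfv^i|^3$ is uniformly continuous and tends to zero; this mirrors exactly the argument the paper itself uses at the kinetic level in Theorem~\ref{thm:E0}.
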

\begin{proof} Due to the translational invariance of system \eqref{eq:dem}, we may assume that 
\[  \bfu^{\infty} \equiv 0. \]
In this setting, the system \eqref{eq:dem} becomes 
\begin{equation*} 
\begin{cases} 
\displaystyle  \frac{\d\bfx^i}{\d t} = \bfv^i, \quad i \in [n], \\
 \displaystyle m^i\frac{\d\bfv^i}{\d t} =  \frac{1}{n} \sum_{j=1}^{n}\bff_c^{ij} - \alpha^i \bfv^i |\bfv^i|. 
\end{cases}
\end{equation*}  
Then, we claim that
\begin{equation} \label{eq:p-5}
\lim_{t \to \infty} \bfv^i(t) = 0.
\end{equation}
{\it Proof of \eqref{eq:p-5}}:~It follows from Lemma \ref{lem:sumparticle} that 
\[ \frac{\d M_2}{\d t}  =  \frac{1}{2n} \sum_{i, j=1}^n \kappa^{ij}_2  \big( (\bfv^i - \bfv^j )\cdot\bfn^{ij} \big)^2 - \sum_{i=1}^{n} \alpha^i \left| \bfv^i\right|^3. \]
Note that for all pairs of colliding floes, $\kappa^{ij}_2 < 0.$
Thus, we have
\begin{align*}
\begin{aligned}
 \frac{\d M_2}{\d t}  = 0 \quad &\iff \quad  (\bfv^j - \bfv^i )\cdot\bfn^{ij} = 0 \quad \mbox{and} \quad  \alpha^i \left| \bfv^i\right|^3 = 0, \quad \forall~i, j \in [n] \\
                                        & \iff \quad \bfx^i:~\mbox{arbitrary}, \quad  \bfv^i = 0 \quad \forall~i \in [n].
\end{aligned}
\end{align*}
Hence we have
\[
S = \Big \{ (\bfx_1, \ldots, \bfx_n, \bfv_1, \ldots, \bfv_n) \in \rev{\bbr^{4n}}:~{\dot M}_2 = 0 \Big \} =  \Big \{ (\bfx_1, \ldots, \bfx_n, 0, \ldots, 0)   \Big \}.
\]
Now, we take $M_2$ as the Lyapunov functional for Proposition \ref{prop:lsip} to see that the flow converges to the largest invariant subset of $S$. Thus, we have the desired estimate. 
\end{proof}

\section{From particle to kinetic description} \label{sec:kmodel}
In this section, we first recall the formal derivation from the particle model to the kinetic model as the mean-field approximation of the particle model with $n \gg 1$. 
We assume that the number of particles involved in the particle system \eqref{eq:dem} is sufficiently large so that it becomes meaningful to use the mean-field approximation via the one-particle distribution function to describe the overall effective dynamics of the original system.

\subsection{Kinetic model for ice floe dynamics}
We first rewrite the floe particle model \eqref{eq:dem} as 
\begin{equation}
\begin{cases} \label{eq:p-6}
 \displaystyle \frac{\d\bfx^i}{\d t} = \bfv^i, \quad i \in [n], \\
 \displaystyle \frac{\d\bfv^i}{\d t} = \frac{1}{m^i} \Big[  \frac{1}{n} \sum_{j=1}^{n}\bff_c^{ij}  + \alpha^i\left(\bfu^i -\bfv^i\right)\left| \bfu^i -\bfv^i\right| \Big] =: \frac{\bfF^i}{m^i}, \\
 \displaystyle \frac{dr^i}{dt} = 0, \quad \frac{dh^i}{dt} = 0,
\end{cases} 
\end{equation}
where we assume that the floe sizes and thicknesses do not change in time. In what follows, we adopt the method of the BBGKY hierarchy to derive a kinetic equation for the one-particle distribution function over the generalized phase space $\bbr_{\bfx}^2 \times \bbr_{\bfv}^2 \times \bbr_+ \times \bbr_+$. \newline

\noindent $\bullet$~Step A (Derivation of the Liouville equation for $n$-particle distribution function):  First, we define $n$-particle distribution function over the $n$-particle phase space $\bbr^{4n} \times \bbr_+^{2n}$:
\begin{equation} \label{eq:p-7}
 F^n = F^n(t,\bfx^1, \bfv^1, r^1, h^1, \cdots, \bfx^n, \bfv^n, r^n, h^n), 
\end{equation}
for $(\bfx^i, \bfv^i, r^i, h^i) \in \mathbb{R}^2\times \mathbb{R}^2 \times \mathbb{R}_+ \times \mathbb{R}_+$. \newline

Note that the $n$-particle probability density function $F^n$ is symmetric in its phase variable in the sense that 
\begin{align}
\begin{aligned} \label{eq:p-8}
&  F^n(t,\cdots, \bfx^i, \bfv^i, r^i, h^i, \cdots, \bfx^j, \bfv^j, r^j, h^j, \cdots) \\
 & \hspace{1.5cm}   = F^n(t, \cdots, \bfx^j, \bfv^j, r^j, h^j, \cdots, \bfx^i, \bfv^i, r^i, h^i, \cdots).
\end{aligned}
\end{align}
Then, $F^n$ satisfies the Liouville equation on the generalized $n$-particle phase space:
\begin{equation} \label{eq:p-9}
\partial_t F^n + \sum_{i=1}^{n} \nabla_{\bfx^i} \cdot ( \dot{\bfx}^i F^n) + \sum_{i=1}^{n} \nabla_{\bfv^i} \cdot ( \dot{\bfv}^i F^n)  + \sum_{i=1}^{n} \partial_{r^i} ( \dot{r}^i F^n)  + \sum_{i=1}^{n} \partial_{h^i} ( \dot{h}^i F^n)  = 0,
\end{equation}
where $ \nabla_{\bfx^i}  \cdot (~  ~)$ and $ \nabla_{\bfv^i}  \cdot (~  ~)$ denote the divergences in $\bfx^i$ and $\bfv^i$-variables, respectively. 
By using \eqref{eq:p-6}, the above system can \rev{be also} written as 
\begin{equation} \label{eq:p-10}
\partial_t F^n +  \sum_{i=1}^{n} \bfv^i \cdot \partial_{\bfx^i} F^n + \sum_{i=1}^{n} \nabla_{\bfv^i} \cdot \Big ( \frac{\bf F^i}{m^i} F^n \Big ) = 0.
\end{equation}
\vspace{0.2cm}

\noindent $\bullet$~Step B (Derivation of equation  for $j$-particle distribution function):  For notational simplicity, we set 
\[ \bfz := (\bfx,\bfv,r,h), \quad   d\bfz^j = d\bfx^j d\bfv^j dr^j dh^j, \quad j \in [n], \quad  \d E^{n:j} = \prod_{i=j+1}^n d\bfz^i. \]
Then, we introduce $j$-marginal distribution function $F^{n:j}$ by the integration of \eqref{eq:p-7}:
\[ F^{n:j}(t, \bfx_1, \ldots, \bfx_j, \bfv_1, \ldots, \bfv_j, r^1, \ldots, r^j, h^1, \ldots, h^j)  := \int_{\bbr^{4(n-j)} \times \bbr_+^{2(n-j)}} F^{n} \d E^{n:j}. \]
Next, we derive an equation for $ F^{n:j}$. For this,  we rewrite \eqref{eq:p-10} as follows.
\begin{align}
\begin{aligned} \label{eq:p-11}
 \partial_t F^n &= -\sum_{i=1}^j   \nabla_{\bfx^i} \cdot  (\bfv^i F^n) - \sum_{i=1}^{j} \nabla_{\bfv^i} \cdot \Big ( \frac{\bf F^i}{m^i} F^n \Big ) - \sum_{i=j+1}^{n}  \nabla_{\bfx^i} \cdot ( \bfv^j F^n)   \\
& \hspace{0.5cm} - \sum_{i=j+1}^{n} \nabla_{\bfv^i} \cdot \Big ( \frac{\bf F^i}{m^i} F^n \Big ).
\end{aligned}
\end{align}
Now, we integrate the Liouville equation \eqref{eq:p-11} over
\[ (\bfx^{j+1}, \cdots, \bfx^{n}, \bfv^{j+1}, \cdots, \bfv^{n}, r^{j+1},. \cdots, r^n, h^{j+1}, \cdots, h^n) \]
to obtain
\begin{align}
\begin{aligned} \label{eq:p-12}
\partial_t F^{n:j} &=  -\sum_{i=1}^j    \int_{\bbr^{4(n-j)} \times \bbr_+^{2(n-j)}}  \nabla_{\bfx^i} \cdot  (\bfv^i F^n) \d E^{n:j} \\
& - \sum_{i=1}^{j} \int_{\bbr^{4(n-j)} \times \bbr_+^{2(n-j)}} \nabla_{\bfv^i} \cdot \Big ( \frac{\bf F^i}{m^i} F^n \Big ) \d E^{n:j} \\
& -  \sum_{i=j+1}^{n}  \int_{\bbr^{4(n-j)} \times \bbr_+^{2(n-j)}}  \nabla_{\bfx^i} \cdot ( \bfv^j F^n) \d E^{n:j} \\
&- \sum_{i=j+1}^{n} \int_{\bbr^{4(n-j)} \times \bbr_+^{2(n-j)}} \nabla_{\bfv^i} \cdot \Big ( \frac{\bf F^i}{m^i} F^n \Big ) \d E^{n:j} \\
&=: {\mathcal I}_{21} + {\mathcal I}_{22} + {\mathcal I}_{23} + {\mathcal I}_{24}.
\end{aligned}
\end{align}
In the next lemma, we estimate the terms ${\mathcal I}_{2i},~i=1,\cdots, 4$, one by one. 
\begin{lemma} \label{lem:I2}
Let $F^n$ be a global solution to \eqref{eq:p-9} which decays to zero sufficiently fast at $|\bfx^i| = \infty$ and $|\bfv^i| = \infty$.  Then, we have  the following estimates:
\begin{align*}
\begin{aligned}
& (i)~{\mathcal I}_{21} = -  \sum_{i=1}^j  \bfv^i  \cdot \nabla_{\bfx^i} F^{n:j},  \quad {\mathcal I}_{23} = 0, \quad {\mathcal I}_{24}  = 0. \\
& (ii)~{\mathcal I}_{22} =  -\frac{1}{n} \sum_{i=1}^{j}  \nabla_{\bfv^i} \cdot  \Big[ \Big( \frac{1}{m^i}  \sum_{k=1}^{j} \bff_c^{ik} \Big)  F^{n:j} \Big] \\
& \hspace{1.5cm} -\frac{n-j}{n} \sum_{i=1}^{j}  \int_{\bbr^{4} \times \bbr_+^{2}} \nabla_{\bfv^i} \cdot \Big( \frac{1}{m^i} \bff_c^{i(j+1)} F^{n:j+1} \Big) d\bfz^{j+1} \\
& \hspace{1.5cm}  - \sum_{i=1}^{j} \frac{1}{m^i}  \nabla_{\bfv^i} \cdot \Big( \alpha^i\left(\bfu^i -\bfv^i\right)\left| \bfu^i -\bfv^i\right| F^{n:j} \Big).
\end{aligned} 
\end{align*}
\end{lemma}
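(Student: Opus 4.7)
The plan is to handle each $\mathcal{I}_{2i}$ by exploiting the commutativity between derivatives in variables indexed by $\{1,\dots,j\}$ and integration in the ``tail'' variables indexed by $\{j+1,\dots,n\}$, combined with the divergence theorem (and the decay hypothesis on $F^n$) to kill boundary contributions whenever the derivative is taken with respect to a variable that is integrated out. The permutation symmetry \eqref{eq:p-8} of $F^n$ in its tail arguments will then collapse the $n-j$ identical ``tail-force'' terms appearing in $\mathcal{I}_{22}$ into a single collision-type integral against the $(j{+}1)$-marginal $F^{n:j+1}$.

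For part (i), I first treat $\mathcal{I}_{21}$: since $i\le j$, the operator $\nabla_{\bfx^i}$ commutes with $\int\!\cdot\,\d E^{n:j}$, and because $\bfv^i$ does not depend on $\bfx^i$, one immediately obtains $\mathcal{I}_{21}=-\sum_{i=1}^j \bfv^i\cdot\nabla_{\bfx^i}F^{n:j}$. For $\mathcal{I}_{23}$, each term has $i>j$, so $\bfx^i$ is an integration variable and the divergence theorem turns the integral into a surface contribution at $|\bfx^i|=\infty$, which vanishes by the decay of $F^n$. For $\mathcal{I}_{24}$, the same argument with $\bfv^i$ as the integration variable works provided $F^n$ decays fast enough in $|\bfv^i|$ to dominate the quadratic growth of the drag term; this is absorbed into the ``sufficiently fast decay'' hypothesis.

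For part (ii), decompose the force as
\[
\bfF^i \;=\; \frac{1}{n}\sum_{k=1}^{j}\bff_c^{ik} \;+\; \frac{1}{n}\sum_{k=j+1}^{n}\bff_c^{ik} \;+\; \alpha^i(\bfu^i-\bfv^i)|\bfu^i-\bfv^i|,
\]
and call the three summands $(A)$, $(B)$, $(C)$. Pieces $(A)$ and $(C)$ involve only phase variables with indices in $\{1,\dots,j\}$, so $\nabla_{\bfv^i}$ commutes with $\int\!\cdot\,\d E^{n:j}$ and produces directly the first and the third terms of the claimed expression for $\mathcal{I}_{22}$. For piece $(B)$ I invoke the symmetry \eqref{eq:p-8}: for any fixed $k\in\{j+1,\dots,n\}$, a change of variables $\bfz^k\leftrightarrow\bfz^{j+1}$ in the tail integration leaves $F^n$ invariant and replaces $\bff_c^{ik}$ by $\bff_c^{i(j+1)}$, so all $n-j$ summands contribute identically. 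Performing the $\d\bfz^{j+2}\cdots\d\bfz^n$ integration first converts each of these $n-j$ equal contributions into an integral against $F^{n:j+1}$, and commuting $\nabla_{\bfv^i}$ back inside $\int\!\cdot\,\d\bfz^{j+1}$ delivers the middle term of the claimed identity.

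The main obstacle is the bookkeeping in piece $(B)$: one must check that $\nabla_{\bfv^i}$ genuinely commutes with the partial integration (which it does, since $\bfv^i$ with $i\le j$ is not integrated), that the symmetry argument produces exactly $(n-j)$ (not $(n-j{-}1)$) identical contributions, and that the integrability needed to apply Fubini and the divergence theorem holds uniformly — in particular for the drag term, whose polynomial growth in $|\bfv|$ must be dominated by the decay of $F^n$. Modulo this combinatorial and integrability care, the remainder is a routine Fubini-plus-divergence-theorem computation.
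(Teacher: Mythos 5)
Your proposal is correct and follows essentially the same route as the paper: pull the $\nabla_{\bfx^i}$ and $\nabla_{\bfv^i}$ derivatives (for $i\le j$) through the tail integration, kill the $i>j$ terms via the divergence theorem and the decay hypothesis, split the contact-force sum into $k\le j$ and $k>j$ parts, and use the permutation symmetry \eqref{eq:p-8} to collapse the $n-j$ identical tail contributions into a single integral against $F^{n:j+1}$. The combinatorial check you flag (that it is $n-j$, not $n-j-1$, terms) is indeed the only bookkeeping point, and it works out since $k$ ranges over $\{j+1,\dots,n\}$.
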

\begin{proof}
\noindent (i) It is easy to check that 
\begin{align*}
\begin{aligned}
{\mathcal I}_{21} =  -\sum_{i=1}^j  \int_{\bbr^{4(n-j)} \times \bbr_+^{2(n-j)}}  \nabla_{\bfx^i} \cdot  (\bfv^i F^n) \d E^{n:j} =  -\sum_{i=1}^j  \nabla_{\bfx^i} \cdot (\bfv^i F^{n:j}).  
\end{aligned}
\end{align*}
On the other hand, we use the divergence theorem and decay condition of $F^n$ at infinity to find 
\[  {\mathcal I}_{23} = 0 \quad \mbox{and} \quad  {\mathcal I}_{24}  = 0. \]
(ii)~Now, we use \eqref{eq:p-8} to see that 
\begin{align}
\begin{aligned} \label{eq:p-13}
{\mathcal I}_{22} &= - \sum_{i=1}^{j} \int_{\bbr^{4(n-j)} \times \bbr_+^{2(n-j)}} \nabla_{\bfv^i} \cdot \Big ( \frac{\bf F^i}{m^i} F^n \Big ) \d E^{n:j} \\
     & = -\frac{1}{n} \sum_{i=1}^{j}  \int_{\bbr^{4(n-j)} \times \bbr_+^{2(n-j)}} \nabla_{\bfv^i} \cdot \Big( \frac{F^n }{m^i}  \sum_{k=1}^{n} \bff_c^{ik} \Big) \d E^{n:j} \\
    & \hspace{0.2cm} - \sum_{i=1}^{j}  \frac{1}{m^i} \int_{\bbr^{4(n-j)} \times \bbr_+^{2(n-j)}} \nabla_{\bfv^i} \cdot \Big( \alpha^i\left(\bfu^i -\bfv^i\right)\left| \bfu^i -\bfv^i\right| F^n \Big) \d E^{n:j} \\
    & =: {\mathcal I}_{221} + {\mathcal I}_{222}.
\end{aligned}
\end{align}
Below, we estimate the terms ${\mathcal I}_{22i}, i=1,2,$ one by one. \newline

\noindent $\diamond$~Case B.1 (Estimate of $ {\mathcal I}_{221}$): We use \eqref{eq:p-8} to find 
\begin{align}
\begin{aligned} \label{eq:p-14}
 {\mathcal I}_{221} &= -\frac{1}{n} \sum_{i=1}^{j}  \int_{\bbr^{4(n-j)} \times \bbr_+^{2(n-j)}} \nabla_{\bfv^i} \cdot \Big( \frac{F^n }{m^i}  \sum_{k=1}^{j} \bff_c^{ik} \Big) \d E^{n:j}  \\
  & \hspace{0.2cm} -\frac{1}{n} \sum_{i=1}^{j}  \int_{\bbr^{4(n-j)} \times \bbr_+^{2(n-j)}} \nabla_{\bfv^i} \cdot \Big( \frac{F^n }{m^i}  \sum_{k=j+1}^{n} \bff_c^{ik} \Big) \d E^{n:j}  \\
  &= -\frac{1}{n} \sum_{i=1}^{j}  \nabla_{\bfv^i} \cdot  \Big[ \Big( \frac{1}{m^i}  \sum_{k=1}^{j} \bff_c^{ik} \Big)  F^{n:j} \Big] \\
  &-\frac{n-j}{n} \sum_{i=1}^{j}  \int_{\bbr^{4} \times \bbr_+^{2}} \nabla_{\bfv^i} \cdot \Big( \frac{1}{m^i} \bff_c^{i(j+1)} F^{n:j+1} \Big) d\bfz^{j+1}.
\end{aligned}
\end{align}
\vspace{0.5cm}
\noindent $\diamond$~Case B.2 (Estimate of $ {\mathcal I}_{222}$): For $i \leq j$, since $\bfv^i$ is independent of $z^{j+1}, \cdots , z^n$, we have
\begin{align}
\begin{aligned} \label{eq:p-16}
{\mathcal I}_{222} &= - \sum_{i=1}^j  \frac{1}{m^i}  \int_{\bbr^{4(n-j)} \times \bbr_+^{2(n-j)}} \nabla_{\bfv^i} \cdot \Big( \alpha^i \left(\bfu^i -\bfv^i\right) \left | \bfu^i -\bfv^i \right | F^n \Big) \d E^{n:j}  \\
&= -\sum_{i=1}^{j} \frac{1}{m^i}   \nabla_{\bfv^i} \cdot \Big ( \alpha^i \left(\bfu^i  -\bfv^i \right)\left| \bfu^i -\bfv^i\right| F^{n:j} \Big).
\end{aligned}
\end{align}
In \eqref{eq:p-13}, we combine \eqref{eq:p-14} and \eqref{eq:p-16} to find the desired estimate. 
\end{proof}

\noindent In \eqref{eq:p-12}, we use Lemma \ref{lem:I2} to find 
\begin{align*}
\begin{aligned}
 \partial_t F^{n:j} &=  -  \sum_{i=1}^j  \bfv^i  \cdot \nabla_{\bfx^i} F^{n:j} -\frac{1}{n} \sum_{i=1}^{j}  \nabla_{\bfv^i} \cdot  \Big[ \Big( \frac{1}{m^i}  \sum_{k=1}^{j} \bff_c^{ik} \Big)  F^{n:j} \Big] \\
 &-\frac{n-j}{n} \sum_{i=1}^{j}  \int_{\bbr^{4} \times \bbr_+^{2}} \nabla_{\bfv^i} \cdot \Big( \frac{1}{m^i} \bff_c^{i(j+1)} F^{n:j+1} \Big) d\bfz^{j+1} \\
&- \sum_{i=1}^{j}  \frac{1}{m^i}  \nabla_{\bfv^i} \cdot \Big( \alpha^i\left(\bfu^i -\bfv^i\right)\left| \bfu^i -\bfv^i\right| F^{n:j} \Big).
\end{aligned}
\end{align*}
For a fixed $j$, let  $n \to \infty$ and we assume that the exists a limit $F^j$ such that 
\[ \lim_{n \to \infty} F^{n:j} = F^j \quad \mbox{in suitable sense}.  \]
Then, formally as $n \to \infty$, the limit $F^j$ satisfies 
\begin{align}
\begin{aligned} \label{eq:p-17}
 & \partial_t F^{j} +  \sum_{i=1}^j  \bfv^i  \cdot \nabla_{\bfx^i} F^{j}  + \sum_{i=1}^{j} \frac{1}{m^i}  \nabla_{\bfv^i} \cdot \Big( \alpha^i\left(\bfu^i -\bfv^i\right)\left| \bfu^i -\bfv^i\right| F^{j} \Big)  \\
 & \hspace{0.5cm} +\sum_{i=1}^{j}  \int_{\bbr^{4} \times \bbr_+^{2}} \nabla_{\bfv^i} \cdot \Big( \frac{1}{m^i} \bff_c^{i(j+1)} F^{j+1} \Big) d\bfz^{j+1}  
= 0.
\end{aligned}
\end{align}
Note that the dynamics of $F^j$ in \eqref{eq:p-17} depends on $F^{j+1}$.  In particular, for $j=1$, we remove the superscript for simplicity and set 
\[ F := F^1, \quad \bfx := \bfx^1, \quad \bfv := \bfv^1,  \quad \bfu =  \bfu^1, \quad \alpha := \alpha^1, \quad m:= m^1. \]
Then, it satisfies 
\begin{align}
\begin{aligned} \label{eq:p-18}
  \partial_t F +   \bfv  \cdot \nabla_{\bfx} F  +  \alpha \nabla_{\bfv} \cdot \Big( \frac{1}{m}  \left(\bfu -\bfv \right)\left| \bfu -\bfv \right| F \Big)  
+  \int_{\bbr^{4} \times \bbr_+^{2}} \nabla_{\bfv} \cdot \Big( \frac{1}{m} \bff_c^{12} F^{2} \Big) d\bfz^{2} 
= 0.
\end{aligned}
\end{align}

\noindent $\bullet$~Step C (Formal derivation of kinetic equation for one-particle distribution function):  We assume the ``molecular chaos assumption" \rev{(see, for example, \cite{spohn2012large})} by setting
\begin{equation} \label{eq:p-19}
F^2(t, \bfz, \bfz^*) = F(t, \bfz) \otimes F(t, \bfz^*). 
\end{equation}
Finally, we substitute the ansatz \eqref{eq:p-19} into \eqref{eq:p-18} to get the kinetic equation for $F$:
\begin{align}
\begin{aligned} \label{eq:p-20}
 & \partial_t F +   \bfv  \cdot \nabla_{\bfx} F  +  \frac{\alpha}{m} \nabla_{\bfv} \cdot \big( \left(\bfu -\bfv \right)\left| \bfu -\bfv \right| F \big)   \\
& \hspace{1.5cm} + \frac{1}{m}   \nabla_{\bfv} \cdot \Big[ \Big( \int_{\bbr^{4} \times \bbr_+^{2}}  \bff_c^{12} F(t, \bfz^*) d\bfz^*  \Big)  F(t, \bfz) \Big ] 
 = 0.
\end{aligned}
\end{align}
%Here we have
%\[ 
%\bfn(\bfx, \bfx^*) :=  \frac{\bfx^*-\bfx }{|\bfx^*-\bfx|}.
%\]
Note that 
\begin{align}
\begin{aligned} \label{eq:p-21}
    \bff_c^{12} & = \kappa_1 \chi(|\bfx^* - \bfx| - (r + r^*)) \bfn + \kappa_2 \big((\bfv  - \bfv^* )\cdot\bfn \big) \bfn, \\
        \alpha & = \pi \rho_o (2 C_{v,o} r \cdot D + C_{h,o} r^2), \quad m=\rho_{ice}\pi r^2 h,
\end{aligned}
\end{align}
where $\bfn = \bfn(\bfx, \bfx^*)=\frac{\bfx^*-\bfx }{|\bfx^*-\bfx|}$ for \rev{notational simplicity and $\chi$ is a function defined as}
\begin{equation} \label{eq:chi}
\chi(\xi) = 
\begin{cases}
\xi, \quad \text{when} \ \xi<0, \\
0, \quad \text{otherwise}.
\end{cases}
\end{equation}
In what follows, we use handy notation:
\begin{align}
\begin{aligned} \label{eq:snot}
& \gamma_o : =\gamma_{r,h, o} :=  \frac{ \alpha}{m}, 
\quad  \gamma_{\bfn} :=  \frac{\kappa_1}{m},  \quad \gamma_{\bfv} :=  \frac{\kappa_2}{m}, \quad \bff[F] = \bff_o +  \bff_{c}[F], \\
& \bff_{c}[F]  = \bff_{c,\bfn}[F] +  \bff_{c,\bfv}[F], \quad  \bff_o :=  \gamma_o \left(\bfu -\bfv \right)\left| \bfu -\bfv \right|, \\
& \bff_{c,\bfn}[F](t,\bfz) :=  \int_{\bbr^{4} \times \bbr_+^{2}}  \gamma_{\bfn}  \chi(|\bfx^* - \bfx| - (r + r^*)) \bfn(\bfx, \bfx^*)  F(t, \bfz^*) d\bfz^*, \\
& \bff_{c,\bfv}[F](t,\bfz) :=  \int_{\bbr^{4} \times \bbr_+^{2}}  \gamma_{\bfv} [ (\bfv  - \bfv^*) \cdot \bfn(\bfx, \bfx^*)] \bfn(\bfx, \bfx^*) F(t, \bfz^*) d\bfz^*.
\end{aligned}
\end{align}
Finally, we substitute \eqref{eq:p-21} and \eqref{eq:snot} into \eqref{eq:p-20} to find the Vlasov-McKean equation:
\begin{align}
\begin{aligned} \label{eq:p-23}
 \partial_t F +   \bfv  \cdot \nabla_{\bfx} F  +   \nabla_{\bfv} \cdot \Big[ \bff_o F \Big ] +  \nabla_{\bfv} \cdot \Big[ \bff_{c,\bfn}[F] F \Big ] +  \nabla_{\bfv} \cdot \Big[ \bff_{c,\bfv}[F] F \Big ]  = 0,
\end{aligned}
\end{align}
or equivalently,
\begin{equation} \label{eq:p-24}
 \partial_t F +   \bfv  \cdot \nabla_{\bfx} F +   \nabla_{\bfv} \cdot (\bff[F] F)= 0.
\end{equation}
Note that the equation \eqref{eq:p-24} is Galilean invariant.

\subsection{Macroscopic behavior of the kinetic model}
From now on, as long as there is no confusion, we suppress $t$-dependence in $F$: 
\[ F(\bfz) \equiv  F(t, \bfz), \quad \bfz \in \rev{ {\mathbb{R}^4 \times \bbr_+^2} } .  \]
\noindent Next, we define the energy functional as follows.
\begin{equation}  \label{eq:p-25}
\begin{cases} 
\displaystyle \E  := \E_K + \E_P, \quad \E_K := \frac{1}{2} \int_{\mathbb{R}^4 \times \bbr_+^2} |\bfv|^2 F(\bfz) \d \bfz, \\
\displaystyle \E_P  :=  \frac{1}{2} \int_{\mathbb{R}^8 \times \bbr_+^4} \gamma_{\bfn}  \Big( \int_0^{|\bfx^* - \bfx| - (r + r^*)} \chi(\eta) d\eta  \Big)  F(\bfz) F(\bfz^*) d\bfz d\bfz^*.
\end{cases}
\end{equation}
With the definition \eqref{eq:chi} in mind, the functional $\E_P$ becomes  
\begin{equation} \label{eq:Ep}
\displaystyle \E_P  :=  \frac{1}{4} \int_{\mathbb{R}^8 \times \bbr_+^4} \gamma_{\bfn}  \chi(|\bfx^* - \bfx| - (r + r^*))^2     F(\bfz) F(\bfz^*) d\bfz d\bfz^* \geq 0.
\end{equation}
Then, it is easy to see that component functionals in $\E$ are also nonnegative, hence the energy functional $\E$ is nonnegative. In the following two lemmas, we establish the temporal evolution of $\E_K$ and $\E_P$ as follows. 
\begin{lemma}[Macroscopic behavior] \label{lem:kmb}
Let $F$ be a global smooth solution to \eqref{eq:p-24} which decays \rev{to zero} sufficiently fast at infinity in phase space. Then, the following estimates hold.
\begin{align}
\begin{aligned} \label{eq:p-27}
& (i)~  \frac{\d}{\d t} \int_{\mathbb{R}^4 \times \bbr_+^2} F(t,\bfz) \d \bfz = 0. \\
& (ii)~\frac{\d}{\d t} \int_{\mathbb{R}^4 \times \bbr_+^2}  \bfv F \d \bfz = \int_{\mathbb{R}^4 \times \bbr_+^2} \gamma_o 
    \left(\bfu -\bfv\right)\left| \bfu -\bfv\right| F(t, \bfz)  \d \bfz. \\
& (iii)~\frac{\d}{\d t} \int_{\mathbb{R}^4} \frac{| \bfv |^2}{2} F(\bfz) \d \bfz =  \int_{\mathbb{R}^4 \times \bbr_+^2} \gamma_o \bfv \cdot  \left(\bfu -\bfv \right)\left| \bfu -\bfv \right| F(\bfz)  d \bfz  \\
& \hspace{1cm} + \frac{1}{2} \int_{\mathbb{R}^8 \times \bbr_+^4}   \gamma_{\bfn}  \chi(|\bfx^* - \bfx| - (r + r^*))( \bfv - \bfv^*)  \cdot \bfn(\bfx, \bfx^*)  F( \bfz^*) F(\bfz) d\bfz^* d \bfz \\
& \hspace{1cm} + \frac{1}{2}  \int_{\mathbb{R}^8 \times \bbr_+^4}   \gamma_{\bfv} [ (\bfv  - \bfv^*) \cdot \bfn(\bfx, \bfx^*)]^2  F(\bfz) F(\bfz^*) d\bfz^* d\bfz.
\end{aligned}
\end{align}  
\end{lemma}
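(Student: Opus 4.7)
The plan is to test the Vlasov--McKean equation \eqref{eq:p-24} successively against $1$, $\bfv$, and $|\bfv|^2/2$, integrating over the phase space $\mathbb{R}^4\times\mathbb{R}_+^2$, and in each case to use the decay of $F$ at infinity to discard boundary terms and integrate by parts. The assertions (i)--(iii) then follow once the contact-force contributions are symmetrized in the pair variables $(\bfz,\bfz^*)$, using the antisymmetry $\bfn(\bfx,\bfx^*)=-\bfn(\bfx^*,\bfx)$ together with the symmetry of $\chi(|\bfx^*-\bfx|-(r+r^*))$ in the swap $\bfz\leftrightarrow\bfz^*$.

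For (i), I would simply integrate \eqref{eq:p-24} over $\bfz$. The transport term $\bfv\cdot\nabla_\bfx F$ integrates to zero by the divergence theorem in $\bfx$, and likewise $\nabla_\bfv\cdot(\bff[F]F)$ integrates to zero in $\bfv$, giving conservation of total mass. For (ii), I would multiply \eqref{eq:p-24} by $\bfv$ and integrate. The spatial transport term again vanishes, and integration by parts in $\bfv$ against $\nabla_\bfv\cdot(\bff[F]F)$ produces $\int \bff[F] F\,d\bfz$. Splitting $\bff[F]=\bff_o+\bff_{c,\bfn}[F]+\bff_{c,\bfv}[F]$ via \eqref{eq:snot} and using the swap $\bfz\leftrightarrow\bfz^*$ in the contact terms, I expect the two contact integrals to reduce to integrals of $\bfn$ (respectively $((\bfv-\bfv^*)\cdot\bfn)\bfn$) against symmetric weights $F(\bfz)F(\bfz^*)$ times a scalar that is invariant under the swap; the factor that changes sign is $\bfn$, so these integrals vanish, leaving only the ocean-drag contribution stated in (ii).

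For (iii), I would multiply \eqref{eq:p-24} by $|\bfv|^2/2$ and integrate. As before, the spatial term vanishes, and integration by parts yields
\[
\frac{d}{dt}\int\frac{|\bfv|^2}{2}F\,d\bfz=\int \bfv\cdot\bff[F]\,F\,d\bfz=\int \bfv\cdot(\bff_o+\bff_{c,\bfn}[F]+\bff_{c,\bfv}[F])\,F\,d\bfz.
\]
The $\bff_o$ contribution directly gives the first term on the right of (iii). For the two contact pieces, the key manipulation is the symmetrization trick: writing the double integral $J:=\int\int \bfv\cdot g(\bfz,\bfz^*)\,F(\bfz)F(\bfz^*)\,d\bfz\,d\bfz^*$ for $g$ built from either $\chi(\cdot)\bfn$ or $((\bfv-\bfv^*)\cdot\bfn)\bfn$, relabelling variables to obtain a second representation $J=\int\int\bfv^*\cdot g(\bfz^*,\bfz)\,F(\bfz)F(\bfz^*)\,d\bfz\,d\bfz^*$, and exploiting $\bfn(\bfx^*,\bfx)=-\bfn(\bfx,\bfx^*)$ together with the scalar symmetries (of $\chi$) or antisymmetries (of $(\bfv-\bfv^*)\cdot\bfn$) of the remaining factors. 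Averaging the two representations converts $\bfv$ into $\tfrac{1}{2}(\bfv-\bfv^*)$, yielding the factor $1/2$ and the quadratic form $[(\bfv-\bfv^*)\cdot\bfn]^2$ in the $\bff_{c,\bfv}$ piece.

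The only step requiring care is the sign bookkeeping in the symmetrization for (iii): the velocity-dependent contact force has $g$ containing two antisymmetric factors whose product is symmetric, while the normal contact force has only one antisymmetric factor. I expect this bookkeeping to be the main (though routine) obstacle; all other manipulations are standard moment computations and applications of the decay hypothesis on $F$ to justify dropping boundary terms during integration by parts.
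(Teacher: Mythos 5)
Your proposal is correct and follows essentially the same route as the paper: integrate the kinetic equation against $1$, $\bfv$, and $|\bfv|^2/2$, drop boundary terms using the decay hypothesis, and symmetrize the contact-force double integrals via the swap $\bfz\leftrightarrow\bfz^*$ together with $\bfn(\bfx^*,\bfx)=-\bfn(\bfx,\bfx^*)$, which kills the contact contributions in (ii) and produces the $\tfrac12(\bfv-\bfv^*)$ combinations in (iii). Your sign bookkeeping for the two contact pieces matches the paper's Cases C.1--C.2 and D.2--D.3 exactly.
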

\begin{proof} 
\noindent (i) We integrate \eqref{eq:p-24} over the phase space $\bbr^4 \times \bbr_+^2$ and use the decay of $F$ at infinity to find the conservation of total mass:
\[ \frac{\d}{\d t} \int_{\mathbb{R}^4 \times \bbr_+^2} F(t,\bfz) \d \bfz = 0. \]
\noindent (ii)~Next, we derive a balance law for momentum. For this, we use \eqref{eq:p-24} to see that 
\begin{equation} \label{eq:p-28}
\partial_t (\bfv F) +   \frac12 \nabla_{\bfx} \cdot \big (\bfv \otimes \bfv F \big)  +  \nabla_{\bfv} \cdot  \Big (  \bfv  \otimes \bff[F] F  \Big )=  \bff_o F +  \bff_{c}[F] F.
\end{equation}
Again, we integrate the relation \eqref{eq:p-28} over the phase space $\bbr^4 \times \bbr_+^2$ to find 
\begin{align}
\begin{aligned} \label{eq:p-29}
& \frac{\d}{\d t} \int_{\mathbb{R}^4 \times \bbr_+^2} \bfv F(t,\bfz) \d \bfz =  \int_{\mathbb{R}^4 \times \bbr_+^2}  \bff_o Fd\bfz + \int_{\mathbb{R}^4 \times \bbr_+^2} \bff_{c}[F] F d\bfz \\
&\hspace{1cm}  =: \int_{\mathbb{R}^4} \gamma_o 
    \left(\bfu -\bfv\right)\left| \bfu -\bfv\right| F(t, \bfz)  \d \bfx \d \bfv + {\mathcal I}_{31} + {\mathcal I}_{32}.
\end{aligned}
\end{align}
Below, we estimate the terms ${\mathcal I}_{3i}, i=1,2,$ one by one. \newline

\noindent $\bullet$~Case C.1 (Estimate of ${\mathcal I}_{31}$): We use the relation for $\bff_{c,\bfn}[F]$ in \eqref{eq:snot} and use the exchange symmetry $\bfz~\leftrightarrow~\bfz^*$ to see
\begin{align*}
\begin{aligned}
 {\mathcal I}_{31} &= \int_{\bbr^{8} \times \bbr_+^{4}}  \gamma_{\bfn}  \chi(|\bfx^* - \bfx| - (r + r^*)) \bfn(\bfx, \bfx^*)  F(\bfz^*) F(\bfz) d\bfz^* d\bfz \\
            &= -\int_{\bbr^{8} \times \bbr_+^{4}}  \gamma_{\bfn}  \chi(|\bfx^* - \bfx| - (r + r^*)) \bfn(\bfx, \bfx^*)  F(\bfz^*) F(\bfz) d\bfz^* d\bfz,
\end{aligned}
\end{align*}
where we used
\[  \bfn(\bfx^*, \bfx)  =  -\bfn(\bfx, \bfx^*).  \]
Hence, we have
\begin{equation} \label{eq:p-30}
{\mathcal I}_{31} = 0.
 \end{equation}
\vspace{0.2cm}

\noindent $\bullet$~Case C.2 (Estimate of ${\mathcal I}_{32}$): Similar to Case C.1, we have
\begin{align*}
\begin{aligned}
{\mathcal I}_{32} &=  \int_{\bbr^{8} \times \bbr_+^{4}}  \gamma_{\bfv} [ (\bfv  - \bfv^*) \cdot \bfn(\bfx, \bfx^*)] \bfn(\bfx, \bfx^*) F(\bfz^*) F(\bfz) d\bfz^* d\bfz \\
&=  -\int_{\bbr^{8} \times \bbr_+^{4}}  \gamma_{\bfv} [ (\bfv^*  - \bfv) \cdot \bfn(\bfx^*, \bfx)] \bfn(\bfx^*, \bfx) F(\bfz) F(\bfz^*) d\bfz d\bfz^*.
\end{aligned}
\end{align*}
Therefore, we have
\begin{equation} \label{eq:p-31}
{\mathcal I}_{32} = 0.
 \end{equation}
 In \eqref{eq:p-29}, we combine \eqref{eq:p-30} and \eqref{eq:p-31} to get the desired estimate. \newline
 
\noindent (iii)~We multiply $\frac{1}{2} |\bfv |^2$ to \eqref{eq:p-24} to get
\begin{align}
\begin{aligned} \label{eq:p-32}
& \partial_t \Big(\frac{1}{2} |\bfv |^2 F \Big) +   \nabla_{\bfx} \cdot \Big (\frac{1}{2} |\bfv |^2 \bfv F \Big)  +  \nabla_{\bfv} \cdot  \Big (\frac{1}{2} |\bfv |^2 \bff[F] F  \Big ) \\
& \hspace{2.5cm} = (\bfv \cdot \bff_o) F +  (\bfv \cdot \bff_{c}[F]) F.
\end{aligned}
\end{align}
Now, we integrate \eqref{eq:p-32}  over $\bbr^4 \times \bbr_+^2$ to find 
\begin{align}
\begin{aligned} \label{eq:p-32-1}
 \frac{\d}{\d t} \int_{\mathbb{R}^4 \times \bbr_+^2} \frac{1}{2} |\bfv |^2 F(\bfz) \d \bfz &  = \int_{\mathbb{R}^4 \times \bbr_+^2}  (\bfv \cdot \bff_o) F(\bfz)  d \bfz + \int_{\mathbb{R}^4 \times \bbr_+^2}   (\bfv \cdot \bff_{c}[F])(\bfz) F(t,\bfz)   d \bfz 
 \\
&  =:  {\mathcal I}_{41} + {\mathcal I}_{42}+ {\mathcal I}_{43}.
\end{aligned}
\end{align}
Below, we estimate the term ${\mathcal I}_{4i}, i=1,2,3,$ one by one. \newline

\noindent $\bullet$~Case D.1 (Estimate of ${\mathcal I}_{41}$):  We use the definition of $\bff_o$ in \eqref{eq:snot} to see
\begin{align}
\begin{aligned} \label{eq:p-32-2}
{\mathcal I}_{41} &= \int_{\mathbb{R}^4 \times \bbr_+^2} \gamma_o \bfv \cdot  \left(\bfu -\bfv \right)\left| \bfu -\bfv \right| F(\bfz)  d \bfz. 
 \end{aligned}
\end{align}
\noindent $\bullet$~Case D.2 (Estimate of ${\mathcal I}_{42}$):~By direct calculation, we obtain
\begin{align}
\begin{aligned} \label{eq:p-32-3}
{\mathcal I}_{42} & = \int_{\mathbb{R}^8 \times \bbr_+^4}   \gamma_{\bfn}  \chi(|\bfx^* - \bfx| - (r + r^*)) \bfv \cdot \bfn(\bfx, \bfx^*)  F( \bfz^*) F(\bfz) d\bfz^* d \bfz   \\
& = \int_{\mathbb{R}^8 \times \bbr_+^4}   \gamma_{\bfn}  \chi(|\bfx - \bfx^*| - (r^* + r)) \bfv^* \cdot \bfn(\bfx^*, \bfx)  F( \bfz) F(\bfz^*) d\bfz d \bfz^*   \\
&=  -\int_{\mathbb{R}^8 \times \bbr_+^4}   \gamma_{\bfn}  \chi(|\bfx^* - \bfx| - (r + r^*)) \bfv^* \cdot \bfn(\bfx, \bfx^*)  F( \bfz^*) F(\bfz) d\bfz^* d \bfz   \\
&= \frac{1}{2} \int_{\mathbb{R}^8 \times \bbr_+^4}   \gamma_{\bfn}  \chi(|\bfx^* - \bfx| - (r + r^*))( \bfv - \bfv^*)  \cdot \bfn(\bfx, \bfx^*)  F( \bfz^*) F(\bfz) d\bfz^* d \bfz, 
\end{aligned}
\end{align}
where we used a exchange map $\bfz~\longleftrightarrow~\bfz^*$ and $\bfn(\bfx^*, \bfx) = -\bfn(\bfx, \bfx^*)$. \newline

\noindent $\bullet$~Case D.3 (Estimate of ${\mathcal I}_{43}$): Similar to Case D.2, we have
\begin{align}
\begin{aligned} \label{eq:p-32-4}
{\mathcal I}_{43} & = \int_{\mathbb{R}^8 \times \bbr_+^4}   \gamma_{\bfv} [ (\bfv  - \bfv^*) \cdot \bfn(\bfx, \bfx^*)] \bfn(\bfx, \bfx^*)  \cdot \bfv F(\bfz) F(\bfz^*) d\bfz^* d\bfz \\
& = \int_{\mathbb{R}^8 \times \bbr_+^4}   \gamma_{\bfv} [ (\bfv^*  - \bfv) \cdot \bfn(\bfx^*, \bfx)] \bfn(\bfx^*, \bfx)  \cdot \bfv^* F(\bfz^*) F(\bfz) d\bfz d\bfz^* \\
& = -\int_{\mathbb{R}^8 \times \bbr_+^4}   \gamma_{\bfv} [ (\bfv  - \bfv^*) \cdot \bfn(\bfx, \bfx^*)] \bfn(\bfx, \bfx^*)  \cdot \bfv^* F(\bfz) F(\bfz^*) d\bfz^* d\bfz \\
& = \frac{1}{2}  \int_{\mathbb{R}^8 \times \bbr_+^4}   \gamma_{\bfv} [ (\bfv  - \bfv^*) \cdot \bfn(\bfx, \bfx^*)]^2  F(\bfz) F(\bfz^*) d\bfz^* d\bfz. \\
\end{aligned}
\end{align}
In \eqref{eq:p-32-1}, we combine \eqref{eq:p-32-2}, \eqref{eq:p-32-3} and \eqref{eq:p-32-4} to get the desired estimate in \eqref{eq:p-27}. 
\end{proof}

\begin{lemma} \label{lem:dEp}
Let $F = F(t, \bfz)$ be a global \rev{smooth} solution to \eqref{eq:p-24} which decays \rev{to zero} sufficiently fast at infinity in the phase space. Then, the following relation holds. 
\[
\frac{d\E_P(t)}{dt} =  -\frac{1}{2} \int_{\mathbb{R}^8 \times \bbr_+^4} \gamma_{\bfn}  \chi( (|\bfx^* - \bfx| - (r + r^*)) (\bfv - \bfv^*) \cdot \bfn(\bfx, \bfx^*) F(\bfz) F(\bfz^*)  d\bfz  d\bfz^*.
\]
\end{lemma}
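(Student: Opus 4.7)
The plan is to differentiate $\E_P$ under the integral sign and substitute the Vlasov--McKean equation \eqref{eq:p-24} for $\partial_t F(\bfz)$ and $\partial_t F(\bfz^*)$, then apply integration by parts using the explicit structure of the contact-potential kernel. I would work from the primitive form of $\E_P$ given in \eqref{eq:p-25} rather than \eqref{eq:Ep}, writing
\[
\Phi(\bfx, \bfx^*, r, r^*) := \int_0^{|\bfx^* - \bfx| - (r + r^*)} \chi(\eta)\, d\eta,
\]
so that $\E_P = \tfrac{1}{2}\int \gamma_{\bfn}\, \Phi\, F(\bfz) F(\bfz^*)\, d\bfz\, d\bfz^*$. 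The product rule on $d/dt$ produces two terms exchanged under $\bfz \leftrightarrow \bfz^*$, and into each I substitute $\partial_t F = -\bfv \cdot \nabla_{\bfx} F - \nabla_{\bfv}\cdot(\bff[F]F)$. Because $\Phi$ depends only on positions and radii, integration by parts in $\bfv$ (respectively $\bfv^*$), justified by the decay of $F$ at infinity, annihilates the full $\nabla_{\bfv}$-divergence piece in both terms.

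What survives is the transport contribution $-\bfv\cdot\nabla_{\bfx}F(\bfz)$ in the first term and $-\bfv^*\cdot\nabla_{\bfx^*}F(\bfz^*)$ in the second. Integrating by parts in $\bfx$ (respectively $\bfx^*$) shifts the derivative onto $\gamma_{\bfn}\Phi$; since $\gamma_{\bfn}=\kappa_1/m$ is independent of position, only $\nabla_{\bfx}\Phi$ remains. Because $\Phi$ is $C^{1}$ with $\Phi'=\chi$ by the fundamental theorem of calculus, the chain rule gives
\[
\nabla_{\bfx}\Phi = -\chi\bigl(|\bfx^*-\bfx|-(r+r^*)\bigr)\,\bfn(\bfx,\bfx^*), \qquad \nabla_{\bfx^*}\Phi = +\chi\bigl(|\bfx^*-\bfx|-(r+r^*)\bigr)\,\bfn(\bfx,\bfx^*),
\]
via $\nabla_{\bfx}|\bfx^*-\bfx|=-\bfn(\bfx,\bfx^*)$ and its sign flip in $\bfx^*$. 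Summing the two contributions, the velocities combine into $\bfv^*-\bfv = -(\bfv-\bfv^*)$, and a factor of $\tfrac{1}{2}$ survives (each of the two terms contributes $\tfrac12$ times a term that doubles upon addition), which is exactly the claimed identity.

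The main obstacle is purely bookkeeping: tracking the opposite signs produced by $\nabla_{\bfx}$ versus $\nabla_{\bfx^*}$ acting on $|\bfx^*-\bfx|$, and keeping the $\tfrac12$ prefactor consistent when the two symmetric half-contributions are combined. There is no regularity issue at the kink $\eta=0$ of $\chi$, since $\chi$ is continuous on $\bbr$ so that $\Phi\in C^1$ and the chain rule holds pointwise. The assumed decay of $F$ at infinity in both $\bfx$ and $\bfv$ discards all boundary terms in the integration by parts, so no approximation argument is needed.
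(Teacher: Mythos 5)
Your proposal is correct and follows essentially the same route as the paper: differentiate the primitive form of $\E_P$ from \eqref{eq:p-25}, substitute \eqref{eq:p-24} for $\partial_t F(\bfz)$ and $\partial_t F(\bfz^*)$, kill the $\nabla_{\bfv}$-divergence terms by the divergence theorem and decay at infinity, and integrate the transport terms by parts in $(\bfx,\bfx^*)$ using $\nabla_{\bfx}\Phi = -\chi\,\bfn(\bfx,\bfx^*)$ and $\nabla_{\bfx^*}\Phi = +\chi\,\bfn(\bfx,\bfx^*)$, which is exactly the paper's split into ${\mathcal I}_{51}$ and ${\mathcal I}_{52}$. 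Your sign bookkeeping lands on the stated identity, and your observation that $\Phi \in C^1$ because $\chi$ is continuous correctly disposes of the only potential regularity concern.
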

\begin{proof} Note that \eqref{eq:p-24} implies 
\begin{align}
\begin{aligned} \label{eq:p-33-2}
&  \partial_t F(\bfz) +  \nabla_{\bfx} \cdot (\bfv F(\bfz) ) +   \nabla_{\bfv} \cdot (\bff[F](\bfz) F(\bfz))= 0, \\
&  \partial_t F(\bfz^*) +  \nabla_{\bfx^*} \cdot(\bfv^* F(\bfz^*)) +   \nabla_{\bfv^*} \cdot (\bff[F](\bfz^*) F(\bfz^*))= 0.
\end{aligned}
\end{align} 
Then, the relation $\eqref{eq:p-33-2}_1 F(\bfz^*) + F(\bfz) \eqref{eq:p-33-2}_2$  implies 
\begin{align}
\begin{aligned} \label{eq:p-33-3}
& \frac12 \partial_t \Big(F(\bfz) F(\bfz^*) \Big ) +  \nabla_{(\bfx, \bfx^*)} \cdot \Big(  (\bfv, \bfv^*) F(\bfz) F(\bfz^*) \Big) \\
& \hspace{2cm} +  \nabla_{(\bfv, \bfv^*)} \cdot \Big( (\bff[F](\bfz), \bff[F](\bfz^*)) F(\bfz) F(\bfz^*)        \Big) = 0.
\end{aligned}
\end{align}
We use \eqref{eq:p-33-3} and differentiate the second equation in \eqref{eq:p-25} (or \eqref{eq:Ep}) with respect to $t$ to find 
\begin{align}
\begin{aligned} \label{eq:p-33-4}
\frac{d \E_P}{dt} &= \frac{1}{4}  \int_{\bbr^8 \times \bbr_+^4} \gamma_{\bfn}  \Big( \int_0^{ (|\bfx^* - \bfx| - (r + r^*)} \chi(\eta) d\eta  \Big)     \partial_t   \Big( F(\bfz) F(\bfz^*) \Big) d\bfz  d\bfz^* \\
&= - \frac{1}{2} \int_{\mathbb{R}^8 \times \bbr_+^4} \gamma_{\bfn}  \Big( \int_0^{ (|\bfx^* - \bfx| - (r + r^*)} \chi(\eta) d\eta  \Big)  \nabla_{(\bfx, \bfx^*)} \cdot   \Big( (\bfv, \bfv^*) F(\bfz) F(\bfz^*) \Big) d\bfz  d\bfz^* \\
&- \frac{1}{2}  \int_{\mathbb{R}^8 \times \bbr_+^4} \gamma_{\bfn}  \Big( \int_0^{ (|\bfx^* - \bfx| - (r + r^*)} \chi(\eta) d\eta  \Big)  \nabla_{(\bfv, \bfv^*)} \cdot \Big( (\bff[F](\bfz), \bff[F](\bfz^*)) F(\bfz) F(\bfz^*)        \Big)  d\bfz  d\bfz^* \\
& =: {\mathcal I}_{51} + {\mathcal I}_{52}.
\end{aligned}
\end{align}
Next, we estimate the term ${\mathcal I}_{5i}$ one by one. \newline

\noindent $\bullet$~Case E.1 (Estimate on ${\mathcal I}_{51}$):~We use the integration by parts and the following relations similar to \eqref{eq:dddt}:
\begin{align*}
\begin{aligned}
&  \nabla_{\bfx} \Big( \int_0^{ (|\bfx^* - \bfx| - (r + r^*)} \chi(\eta) d\eta  \Big)   =  - \chi( (|\bfx^* - \bfx| - (r + r^*)) \bfn(\bfx^*, \bfx),  \\
&  \nabla_{\bfx^*}\Big( \int_0^{ (|\bfx^* - \bfx| - (r + r^*)} \chi(\eta) d\eta  \Big) = \chi( (|\bfx^* - \bfx| - (r + r^*)) \bfn(\bfx^*, \bfx)
\end{aligned}
\end{align*}
to find 
\begin{align}
\begin{aligned} \label{eq:p-33-5}
{\mathcal I}_{51} &= -\frac{1}{2} \int_{\mathbb{R}^8 \times \bbr_+^4}  \gamma_{\bfn} \Big( \int_0^{ (|\bfx^* - \bfx| - (r + r^*)} \chi(\eta) d\eta  \Big)  \nabla_{(\bfx, \bfx^*)} \cdot   \Big( (\bfv, \bfv^*) F(\bfz) F(\bfz^*) \Big) d\bfz  d\bfz^* \\
&= \frac{1}{2} \int_{\mathbb{R}^8 \times \bbr_+^4}  \gamma_{\bfn} \Big[ \nabla_{(\bfx, \bfx^*)} \Big( \int_0^{ (|\bfx^* - \bfx| - (r + r^*)} \chi(\eta) d\eta  \Big) \Big] \cdot  \Big( (\bfv, \bfv^*) F(\bfz) F(\bfz^*) \Big) d\bfz  d\bfz^* \\
& = \frac{1}{2}  \int_{\mathbb{R}^8 \times \bbr_+^4} \gamma_{\bfn}  \chi( (|\bfx^* - \bfx| - (r + r^*)) (\bfv^* - \bfv) \cdot \bfn(\bfx^*, \bfx) F(\bfz) F(\bfz^*)  d\bfz  d\bfz^*.
\end{aligned}
\end{align}
\noindent $\bullet$~Case E.2 (Estimate on ${\mathcal I}_{52}$):~By the divergence theorem, we have
\begin{align}
\begin{aligned} \label{eq:p-33-6}
{\mathcal I}_{52} &= -\frac{1}{2} \int_{\mathbb{R}^8 \times \bbr_+^4}  \gamma_{\bfn}  \nabla_{(\bfv, \bfv^*)} \cdot \Big[ \Big( \int_0^{ (|\bfx^* - \bfx| - (r + r^*)} \chi(\eta) d\eta  \Big)  \Big( (\bff[F](\bfz), \bff[F](\bfz^*)) F(\bfz) F(\bfz^*)        \Big) \Big]  d\bfz  d\bfz^*   \\
&= 0.
\end{aligned}
\end{align}
In \eqref{eq:p-33-4}, we combine \eqref{eq:p-33-5} and \eqref{eq:p-33-6} to find the desired estimate. 
\end{proof}
Finally, we combine Lemma \ref{lem:kmb} and Lemma \ref{lem:dEp} to derive the energy estimate. 
\begin{proposition} \label{thm:E}
Let $F = F(t, \bfz)$ be a global smooth solution to \eqref{eq:p-24} which decays \rev{to zero} sufficiently fast at infinity in phase space. Then, we have
\begin{align*}
\begin{aligned}
\frac{d\E}{dt} &=   \int_{\mathbb{R}^4 \times \bbr_+^2} \gamma_o \bfv \cdot  \left(\bfu -\bfv \right)\left| \bfu -\bfv \right| F(\bfz)  d \bfz  \\
&\hspace{0.5cm} + \frac{1}{2}  \int_{\mathbb{R}^8 \times \bbr_+^4}   \gamma_{\bfv} [ (\bfv  - \bfv^*) \cdot \bfn(\bfx, \bfx^*)]^2  F(\bfz) F(\bfz^*) d\bfz^* d\bfz.
\end{aligned}
\end{align*}
\end{proposition}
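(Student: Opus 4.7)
The plan is straightforward: since $\E = \E_K + \E_P$ by definition in \eqref{eq:p-25}, I would simply add the time derivatives of $\E_K$ and $\E_P$, which are already computed in Lemma \ref{lem:kmb}(iii) and Lemma \ref{lem:dEp} respectively, and then observe that the two $\gamma_{\bfn}$-contributions cancel identically. No new estimate is needed; this is purely a bookkeeping step.

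More concretely, first I would rewrite the kinetic energy term using Lemma \ref{lem:kmb}(iii) in the form
\[
\frac{d\E_K}{dt} = \mathcal{A} + \mathcal{B} + \mathcal{C},
\]
where $\mathcal{A}$ is the drag-work term $\int \gamma_o \bfv \cdot (\bfu - \bfv)|\bfu - \bfv| F \, d\bfz$, $\mathcal{B}$ is the normal-spring exchange term
\[
\mathcal{B} = \tfrac{1}{2}\int \gamma_{\bfn}\,\chi(|\bfx^* - \bfx| - (r + r^*))\,(\bfv - \bfv^*)\cdot \bfn(\bfx,\bfx^*)\,F(\bfz^*)F(\bfz)\,d\bfz^*\,d\bfz,
\]
and $\mathcal{C}$ is the dissipative contact-velocity term involving $\gamma_{\bfv}$. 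Next, from Lemma \ref{lem:dEp}, $\frac{d\E_P}{dt} = -\mathcal{B}$, using the symmetry under the exchange $\bfz \leftrightarrow \bfz^*$ together with $\bfn(\bfx^*,\bfx) = -\bfn(\bfx,\bfx^*)$ to identify the two integrals as the same object up to sign. Summing gives $\frac{d\E}{dt} = \mathcal{A} + \mathcal{C}$, which is precisely the stated identity.

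The only subtlety worth flagging is a consistency check on signs and symmetrization: the expression $\mathcal{B}$ has two equivalent representations depending on whether one keeps $\bfv$ or writes $\frac{1}{2}(\bfv-\bfv^*)$ using the exchange symmetry, so I would be careful to use the same symmetrized form in both lemmas before declaring the cancellation. Once that matching is done, no further computation is required and the proposition follows in one line.

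I do not anticipate any real obstacle here, since the substantial work — the integration-by-parts arguments, the handling of the decay at infinity of $F$, and the symmetrization tricks for the contact integrals — has already been carried out in Lemma \ref{lem:kmb} and Lemma \ref{lem:dEp}. The present proposition is essentially the statement that the normal-spring force is conservative at the kinetic level (its work is stored in $\E_P$), so the only non-conservative contributions to $\E$ come from ocean drag ($\mathcal{A}$) and the dashpot-type velocity damping in the contact law ($\mathcal{C}$).
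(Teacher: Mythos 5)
Your proposal is correct and is precisely the paper's own argument: the text introduces Proposition \ref{thm:E} with ``we combine Lemma \ref{lem:kmb} and Lemma \ref{lem:dEp}'', and the cancellation of the symmetrized $\gamma_{\bfn}$-term (your $\mathcal{B}$) against $\frac{d\E_P}{dt}=-\mathcal{B}$ is exactly what happens, leaving the drag-work and $\gamma_{\bfv}$-dissipation terms. Your sign/symmetrization check is the right thing to verify, and it goes through since both lemmas already state the $\gamma_{\bfn}$-contribution in the same symmetrized form $\tfrac12(\bfv-\bfv^*)\cdot\bfn(\bfx,\bfx^*)$.
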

%Before we move on further, we define projected $x$- and $v$- supports of the one-particle distribution function $F$ as follows: for $t  > 0$, 
%\begin{align*}
%\begin{aligned}
%\mbox{supp}_{\bfx}(F(t)) &:= \overline{ \{ \bfx \in \bbr^d:~\exists~\bfz \in \bbr^{4} \times \bbr_+^2:~F(t, \bfz) \neq 0 \}}, \\
%\mbox{supp}_{\bfv}(F(t)) &:= \overline{ \{ \bfv \in \bbr^d:~\exists~\bfz \in \bbr^{4} \times \bbr_+^2:~F(t, \bfz) \neq 0 \}}.
%\end{aligned}
%\end{align*}

Next, we establish a kinetic counterpart of Theorem \ref{thm:v0}.  Suppose that the ocean velocity is constant, say zero:
\[  \bfu \equiv 0. \]
Let $\bfz = (\bfx, \bfv, r, h) \in \bbr^4 \times \bbr_+^2$ be fixed. Then, we define the particle trajectory $\bfz(t) = (\bfx(t), \bfv(t), r(t), h(t))$ issued from $\bfz$ at time $s = 0$ as the unique solution of the following system of integro-differential equations:
\begin{equation} \label{New-1}
\begin{cases}
\displaystyle \frac{d\bfx(t)}{dt} = \bfv(t), \quad t > 0, \\ 
\displaystyle  \frac{d\bfv(t)}{dt} = -\gamma_o \bfv(t) \left|\bfv(t) \right|  + \int_{\bbr^{4} \times \bbr_+^{2}}  \gamma_{\bfn}  \chi(|\bfx^* - \bfx(t)| - (r(t) + r^*)) \bfn(\bfx(t), \bfx^*)  F(t, \bfz^*) d\bfz^*  \\
\displaystyle \hspace{1.2cm} + \int_{\bbr^{4} \times \bbr_+^{2}}  \gamma_{\bfv}  [ (\bfv(t)  - \bfv^*) \cdot \bfn(\bfx(t), \bfx^*)] \bfn(\bfx(t), \bfx^*) F(t, \bfz^*) d\bfz^*,  \\
\displaystyle  \frac{dr(t)}{dt} = 0, \quad \frac{dh(t)}{dt} = 0,
\end{cases}
\end{equation} 
subject to initial data:
\begin{equation*} 
(\bfx, \bfv, r, h)(0) =  (\bfx, \bfv, r, h).
\end{equation*}
Then, it is easy to see that 
\[ r(t) = r, \quad h(t) = h, \quad t > 0. \]
Hence, the above system \eqref{New-1} can be rewritten as 
\begin{equation} \label{eq:p-34}
\begin{cases}
\displaystyle \frac{d\bfx(t)}{dt} = \bfv(t), \quad t > 0,. \\ 
\displaystyle  \frac{d\bfv(t)}{dt} = -\gamma_o \bfv(t) \left|\bfv(t) \right|  + \int_{\bbr^{4} \times \bbr_+^{2}}  \gamma_{\bfn}  \chi(|\bfx^* - \bfx(t)| - (r + r^*)) \bfn(\bfx(t), \bfx^*)  F(t, \bfz^*) d\bfz^*  \\
\displaystyle \hspace{1cm} + \int_{\bbr^{4} \times \bbr_+^{2}}  \gamma_{\bfv}  [ (\bfv(t)  - \bfv^*) \cdot \bfn(\bfx(t), \bfx^*)] \bfn(\bfx(t), \bfx^*) F(t, \bfz^*) d\bfz^*.
\end{cases}
\end{equation} 
Next, we establish the zero convergence of kinetic energy under the compact support assumption. First, we set $B_r(z)$ to be the open ball with the center $z$ and radius $r$. 
\begin{theorem} \label{thm:E0} 
Suppose that the ocean velocity, initial data, and system parameters satisfy
\[  \bfu = 0, \quad \E(0) < \infty, \quad \int_{\bbr^{4}\times \bbr_+^2} F^0(\bfz) d\bfz = 1,  \]
and let  $F = F(t,\bfz)$ be a global smooth solution to \eqref{eq:p-24} with a priori compact support assumption:
\begin{equation*} %\label{eq:p-33-6-1}
\exists~z_\infty < \infty\quad\mbox{such that}\quad\mbox{supp}_{\bfz}(F(t)) \subset B_{z_\infty}(0), \quad t > 0.
\end{equation*}
 Then, the kinetic energy tends to zero asymptotically:
\[ \lim_{t \to \infty}  \E_K(t) = 0.  \]
\end{theorem}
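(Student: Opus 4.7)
The plan is to combine the energy identity in Proposition~\ref{thm:E} with the compact-support hypothesis, and then invoke a Barbalat-type argument to pass from $L^1$-integrability of a dissipation quantity to the pointwise decay of $\E_K$.

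First, I would show that the total energy $\E$ is monotonically non-increasing. Setting $\bfu \equiv 0$ in Proposition~\ref{thm:E}, and using the negativity $\gamma_{\bfv} = \kappa_2/m < 0$ inherited from $\beta < 0$ in \eqref{eq:beta}, both terms on the right-hand side of the energy identity become manifestly non-positive:
$$\frac{d\E}{dt} = -\int_{\bbr^4 \times \bbr_+^2} \gamma_o |\bfv|^3 F \, d\bfz - \frac{1}{2}\int_{\bbr^8 \times \bbr_+^4} |\gamma_{\bfv}|\,[(\bfv - \bfv^*)\cdot \bfn]^2 F F^* d\bfz^* d\bfz \le 0.$$
Since $\E(t) \ge 0$ by definition \eqref{eq:p-25}, the limit $\E_\infty := \lim_{t \to \infty}\E(t)$ exists and is finite, and integrating in $t$ yields $g \in L^1(0,\infty)$, where $g(t) := \int \gamma_o |\bfv|^3 F(t,\bfz)\, d\bfz$.

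Next, I would bound $\E_K$ pointwise by $g$ via Hölder's inequality. Using the mass normalization $\int F\, d\bfz = 1$ together with Hölder's inequality with conjugate exponents $3/2$ and $3$,
$$2\E_K(t) = \int |\bfv|^2 F \, d\bfz \le \left(\int |\bfv|^3 F \, d\bfz\right)^{2/3}.$$
The compact-support hypothesis $\mathrm{supp}_{\bfz}(F(t)) \subset B_{z_\infty}(0)$ implies $r, h \in (0, z_\infty]$ on the support, and the explicit form $\gamma_o = \rho_o(1.8\,C_{v,o}/r + C_{h,o}/h)/\rho_{ice}$ then yields the uniform lower bound $\gamma_o \ge \rho_o C_{h,o}/(z_\infty \rho_{ice}) =: \gamma_o^{\min} > 0$ on the support. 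Consequently $\E_K(t)^{3/2} \le C\, g(t)$ for some constant $C$ depending only on $z_\infty$ and the physical parameters, and in particular $\E_K^{3/2} \in L^1(0,\infty)$.

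The main obstacle is the final step of upgrading this $L^1$-integrability to pointwise decay $\E_K(t) \to 0$. I would invoke Barbalat's lemma, which requires uniform continuity of $\E_K$ in $t$. From Lemma~\ref{lem:kmb}(iii), the three integrals comprising $d\E_K/dt$ can each be controlled using the compact-support bounds $|\bfv|, |\bfv^*| \le z_\infty$, $|\chi(|\bfx^*-\bfx|-(r+r^*))| \le 2 z_\infty$, and $|\bfn|=1$, together with $\int F\, d\bfz = \int F F^*\, d\bfz\, d\bfz^* = 1$, provided the coefficients $\gamma_o, \gamma_{\bfn}, \gamma_{\bfv}$ admit uniform upper bounds on the support. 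This requires a strictly positive lower bound on $r$ and $h$ on the support of $F$, a mild strengthening of the compact-support hypothesis that is automatically preserved in time since $\dot r = \dot h = 0$. Under this condition, $\E_K$ is Lipschitz in $t$ hence uniformly continuous and bounded, so $\E_K^{3/2}$ is likewise uniformly continuous and lies in $L^1(0,\infty)$; Barbalat's lemma then gives $\E_K^{3/2}(t) \to 0$, and therefore $\E_K(t) \to 0$ as $t \to \infty$.
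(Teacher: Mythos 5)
Your proposal is correct and follows essentially the same route as the paper: the dissipation identity from Proposition~\ref{thm:E} with $\bfu=0$, integrability in time of the cubic velocity moment, H\"older's inequality against the unit mass, and Barbalat's lemma via a compact-support bound on a time derivative. The only (cosmetic) difference is the order of the last two steps --- the paper applies Barbalat to $\Lambda_1(t)=\int \gamma_o|\bfv|^3F\,d\bfz$ and then invokes H\"older, whereas you apply H\"older first and Barbalat to $\E_K$ --- and you are in fact more explicit than the paper about the need for $\gamma_o$, $\gamma_{\bfn}$, $\gamma_{\bfv}$ to be bounded above and below on the support, which requires $r$ and $h$ bounded away from zero there.
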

\begin{proof} With $\bfu = 0$ and \eqref{eq:p-34} in mind, Proposition \ref{thm:E} reduces to 
\begin{align}
\begin{aligned} \label{eq:p-34-1}
\frac{d\E}{dt} &=  -\int_{\mathbb{R}^4 \times \bbr_+^2} \gamma_o | \bfv |^3  F(\bfz)  d \bfz - \frac{1}{2}  \int_{\mathbb{R}^8 \times \bbr_+^4}   |\gamma_{\bfv}| [ (\bfv  - \bfv^*) \cdot \bfn(\bfx, \bfx^*)]^2  F(\bfz) F(\bfz^*) d\bfz^* d\bfz.
\end{aligned}
\end{align}  
Next, we define energy production terms: 
\begin{align*}
\begin{aligned}
\Lambda(t) &:= \Lambda_1(t) + \Lambda_2(t),  \quad \Lambda_1(t) := \int_{\mathbb{R}^4 \times \bbr_+^2} \gamma_o | \bfv |^3  F(t,\bfz)  d \bfz,        \\
\Lambda_2(t) &:= \frac{1}{2}  \int_{\mathbb{R}^8 \times \bbr_+^4}  | \gamma_{\bfv} | [ (\bfv  - \bfv^*) \cdot \bfn(\bfx, \bfx^*)]^2  F(t,\bfz) F(t,\bfz^*) d\bfz^* d\bfz.
\end{aligned}
\end{align*}
Then, we integrate the equation \eqref{eq:p-34-1} to find 
\begin{equation*} \label{eq:p-33-8}
\E(t) + \int_0^t \Lambda(s) ds = \E(0), \quad t \geq 0.
\end{equation*}
This yields
\begin{equation} \label{eq:p-34-2}
\sup_{0 \leq t < \infty} \E_K(t) \leq \E(t) \leq \E(0)
\end{equation}
and 
\begin{equation} \label{eq:p-34-3}
\int_0^{\infty} \Lambda_1(t) dt = \int_0^{\infty} \int_{\mathbb{R}^4 \times \bbr_+^2} \gamma_o  | \bfv |^3  F(t,\bfz)  d \bfz dt \leq \int_0^{\infty} \Lambda(t) dt \leq \E(0) < \infty.
\end{equation}
We claim that 
\begin{equation} \label{eq:p-34-4}
\sup_{0< t < \infty} \Big| \frac{d}{dt} \Lambda_1(t)  \Big| < \infty, \qquad \lim_{t \to \infty} \int_{\mathbb{R}^4 \times \bbr_+^2} | \bfv |^3  F(t,\bfz)  d \bfz = 0.
\end{equation}
{\it Proof of \eqref{eq:p-34-4}}: (i)~We use \eqref{eq:snot}, \eqref{eq:p-24} and zero far-field conditions of $F$ in $\bfz$-variable to find 
\begin{align}
\begin{aligned} \label{eq:p-34-5}
\frac{d}{dt} \Lambda_1(t) &= \int_{\mathbb{R}^4 \times \bbr_+^2} | \bfv |^3 \partial_t F(t,\bfz)  d \bfz \\
&= - \int_{\mathbb{R}^4 \times \bbr_+^2}   \nabla_{\bfx} \cdot  \Big( | \bfv |^3 \bfv F(t, \bfz) \Big)  d \bfz -\int_{\mathbb{R}^4 \times \bbr_+^2}   | \bfv |^3 \nabla_{\bfv} \cdot (\bff(F) F)  d \bfz \\
&= -\int_{\mathbb{R}^4 \times \bbr_+^2}   | \bfv |^3 \nabla_{\bfv} \cdot (\bff(F) F)  d \bfz =  3 \int_{\mathbb{R}^4 \times \bbr_+^2}  | \bfv | (\bfv \cdot \bff(F)) F  d \bfz.
\end{aligned}
\end{align}
On the other hand, there exists a positive constant $C$  such that 
\begin{align}
\begin{aligned} \label{eq:p-34-6}
& | \bff_{c,\bfn}[F](t,\bfz) |  \leq  C z_\infty, \quad |\bff_{c,\bfv}[F](t,\bfz)| \leq C z_\infty, \\
&  | \bfv|  | \bfv \cdot \bff(F)| = |\bfv| \cdot   \Big |\bfv  \cdot \Big (   -\gamma_o \bfv  |\bfv | +   \bff_{c,\bfn}[F] +  \bff_{c,\bfv}[F]  \Big ) \Big| \leq C ( z_\infty^4 + z_\infty^3).
\end{aligned}
\end{align}
We combine \eqref{eq:p-34-5}, \eqref{eq:p-34-6}  and use the unit mass of $F$ to see the desired first estimate in \eqref{eq:p-34-4}:
\[
\Big| \frac{d}{dt} \Lambda_1(t)  \Big| \leq 3C ( z_\infty^4 + z_\infty^3)  \int_{\mathbb{R}^4 \times \bbr_+^2} F  d \bfz \leq  3C ( z_\infty^4 + z_\infty^3).
\]
Since the right-hand side of the above inequality is independent of $t$, we take the supremum of the above relation over $t$ to find the desired first estimate.  \newline

\noindent (ii)~Note that the first estimate  in \eqref{eq:p-34-4} implies the uniform continuity of $\Lambda_1$. Hence, we use \eqref{eq:p-34-2}, \eqref{eq:p-34-3} and \rev{Barbalat's} lemma to find 
\begin{equation} \label{eq:p-34-7}
\lim_{t \to \infty} \int_{\mathbb{R}^4 \times \bbr_+^2} | \bfv |^3  F(t,\bfz)  d \bfz = 0.
\end{equation}
Now, we use the H\"{o}lder inequality to get 
\begin{align}
\begin{aligned} \label{eq:p-34-8}
\E_K(t) &= \frac{1}{2} \int_{\mathbb{R}^4 \times \bbr_+^2} |\bfv|^2 F(\bfz) \d \bfz \leq \frac{1}{2} \int_{\mathbb{R}^4 \times \bbr_+^2} \Big( |\bfv|^2 F^{\frac{2}{3}} (\bfz) \Big) F^{\frac{1}{3}}(\bfz)  \d \bfz \\
&\leq  \frac{1}{2}  \Big( \int_{\mathbb{R}^4 \times \bbr_+^2}  |\bfv|^3 F(\bfz) \d \bfz \Big)^{\frac{2}{3}}  \Big(\int_{\mathbb{R}^4 \times \bbr_+^2}  F(\bfz) \d \bfz \Big)^{\frac{1}{3}}  \\
& =  \frac{1}{2}  \Big( \int_{\mathbb{R}^4 \times \bbr_+^2}  |\bfv|^3 F(\bfz) \d \bfz \Big)^{\frac{2}{3}}.
\end{aligned}
\end{align}
Finally, we use \eqref{eq:p-34-7} and \eqref{eq:p-34-8} to find the desired estimate: 
\begin{equation*}  
\lim_{t \to \infty} \E_K(t) = 0.
\end{equation*}
\end{proof}

\rev{Herein, we used the a priori compact support assumption. While we do not have proof that this compact assumption is valid for all times, such assumptions are commonly adopted in kinetic theory to ensure well-posedness and boundedness of moments (see, e.g., \cite{spohn2012large}).}

\begin{remark} %\label{rem:cs}
By the Cauchy-Schwarz inequality, one has 
\[ \int_{\bbr^{4} \times \bbr_+^2}  |\bfv | F(\bfz) d\bfz \leq \sqrt{ \int_{\bbr^{4} \times \bbr_+^2}  \| \bfv \|^2  F(\bfz)  d \bfz} = \sqrt{2\E_K(t)}.\]
Then, we use Theorem \ref{thm:E0} and the above relation to find 
\begin{equation*} 
\lim_{t \to \infty}  \int_{\bbr^{4} \times \bbr_+^2}  |\bfv | F(\bfz) d\bfz = 0.
\end{equation*}
\end{remark}

\section{From kinetic to hydrodynamic description} \label{sec:hmodel}
In this section, we discuss the hydrodynamic description of the collective behaviors of many ice floes. For this, we derive hydrodynamic observables from the kinetic model \eqref{eq:p-24} under a suitable closure condition:
\begin{equation} \label{eq:p-37}
 \partial_t F +   \nabla_{\bfx} \cdot (\bfv F) +   \nabla_{\bfv} \cdot (\bff[F] F)= 0.
\end{equation}
For notational simplicity, we simply set 
\[  \bfz := (\bfv, r, h) \in \bbr^2 \times \bbr_+^2, \quad  d \bfz := d\bfv dr dh. \]

\subsection{Hydrodynamic model for ice floe dynamics}

We first introduce hydrodynamic observables corresponding to the velocity-radius-thickness averages of the one-particle distribution function $F = \rev{F(t, \bfx, \bfz)}$:
\begin{equation}
\begin{cases} \label{eq:p-38}
\displaystyle \rho(t, \bfx) :=  \int_{\bbr^2 \times \bbr_+^2} F d \bfz, \quad &\mbox{local mass density},   \\
\displaystyle  (\rho \bfuu)(t, \bfx) :=  \int_{\bbr^2 \times \bbr_+^2} \bfv F  \d \bfz ,  \quad &\mbox{local momentum density}, \\
\displaystyle (\rho E)(t, \bfx) :=   \int_{\bbr^2 \times \bbr_+^2} \frac{1}{2} | \bfv |^2 F  \d \bfz ,  \quad &\mbox{local energy density}, \\
\displaystyle (\rho e)(t, \bfx) :=   \int_{\bbr^2 \times \bbr_+^2} \frac{1}{2} | \bfv - \bfuu |^2 F  \d \bfz ,  \quad &\mbox{local internal energy density}, \\
\displaystyle ({\rho E}_K)(t, \bfx) := \frac{1}{2}( \rho |\bfuu|^2)(t, x), \quad  &\mbox{local kinetic energy density}.
\end{cases}
\end{equation}
Then, we use 
\[ \frac{1}{2} | \bfv |^2 = \frac{1}{2} | \bfv - \bfuu + \bfuu |^2 =  \frac{1}{2} | \bfv - \bfuu |^2 + \frac{1}{2} |\bfuu|^2 +  (\bfv - \bfuu) \cdot \bfuu \]
to see the following relation:
\begin{equation*}
%\begin{align} %\label{eq:p-38-1}
\begin{aligned} 
\rho E &=  \frac{1}{2} \int_{\bbr^2 \times \bbr_+^2} | \bfv |^2 F \d \bfz \\
& =\frac{1}{2}  \int_{\bbr^2 \times \bbr_+^2} | \bfv - \bfuu |^2 F \d \bfz  + \frac{1}{2} \int_{\bbr^2 \times \bbr_+^2}  |\bfuu|^2 F  \d \bfz  +   \bfuu \cdot \int_{\bbr^2 \times \bbr_+^2}  (\bfv - \bfuu)  F  \d \bfz  \\
&=: \rho e +  \rho E_K.
\end{aligned}
%\end{align}
\end{equation*}
We also define the stress tensor $P = (p_{ij})$ and heat flux $q = (q_i)$ as follows:
\begin{align}
\begin{aligned} \label{eq:p-39}
p_{ij} &:= \int_{\bbr^2 \times \bbr_+^2}(v_i - u_i)(v_j - u_j) F  \d \bfz, \quad \forall~i, j  = 1,2, \\
q_i &:=  \int_{\bbr^2 \times \bbr_+^2} (v_i - u_i) \| \bfv - \bfuu \|^2 F  \d \bfz.
\end{aligned}
\end{align}

In the following lemma, we derive the local balanced laws for hydrodynamic observables $(\rho, \rho \bfuu, \rho E)$. 
\begin{lemma}[Hydrodynamic description] \label{lem:conteq}
Let $F = F(t, \bfz)$ be a global smooth solution to \eqref{eq:p-37} which decays \rev{to zero} sufficiently fast at infinity in $\bfv, r$ and $h$. Then, the hydrodynamic observables $(\rho, \rho \bfuu,  \rho E)$ satisfy local balanced laws:
\begin{equation} \label{eq:hmodel}
\begin{cases}
\displaystyle \partial_t \rho + \nabla_{\bfx} \cdot (\rho \bfuu) = 0, \\
\displaystyle  \partial_t (\rho \bfuu)+  \nabla_{\bfx} \cdot \Big ( \rho \bfuu \otimes \bfuu + P \Big) =   \int_{\bbr^2 \times \bbr_+^2} \rev{\gamma_o}
    \left(\bfu -\bfv\right)\left| \bfu -\bfv\right| F(\bfz)   \d \bfz,   \\
 \displaystyle  \partial_t (\rho E)+  \nabla_{\bfx} \cdot \Big ( \rho E \bfuu + Pu + q \Big) =  \int_{\mathbb{R}^2 \times \bbr_+^2} \gamma_o \bfv \cdot  \left(\bfu -\bfv \right)\left| \bfu -\bfv \right| F(\bfz)   \d \bfz   \\
 \displaystyle  \hspace{0.5cm}+ \frac{1}{2} \int_{\mathbb{R}^4 \times \bbr_+^4}   \gamma_{\bfn}  \chi(|\bfx^* - \bfx| - (r + r^*))( \bfv - \bfv^*)  \cdot \bfn(\bfx, \bfx^*)  F( \bfz^*) F(\bfz) \d \bfz^*   \d \bfz   \\
  \displaystyle  \hspace{0.5cm} +\frac{1}{2}  \int_{\mathbb{R}^4 \times \bbr_+^4}   \gamma_{\bfv}  \big|  (\bfv  - \bfv^*) \cdot \bfn(\bfx, \bfx^*) \big |^2  F(\bfz) F(\bfz^*) \d\bfz^*  \d \bfz.  \\
\end{cases}
\end{equation}
\end{lemma}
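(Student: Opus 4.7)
The plan is to apply the moment method to the Vlasov-McKean equation \eqref{eq:p-37}: multiply by test functions $\varphi(\bfv) \in \{1,\bfv,\tfrac12|\bfv|^2\}$ and integrate over $\bfz = (\bfv,r,h)$ at each fixed $(t,\bfx)$. The decay of $F$ at infinity in $\bfz$ lets us discard all boundary contributions arising from integration by parts in $\bfv$. The continuity equation is then immediate: integrating \eqref{eq:p-37} over $\bfz$ yields $\partial_t \rho$ from the time derivative commuting past the integral, $\nabla_{\bfx}\cdot(\rho\bfuu)$ from the spatial transport by the definition of $\bfuu$ in \eqref{eq:p-38}, and zero from the $\nabla_{\bfv}$-divergence term by the divergence theorem.

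For the momentum equation, multiplying \eqref{eq:p-37} by $\bfv$ and integrating produces
\[
\partial_t(\rho\bfuu) \;+\; \nabla_{\bfx}\cdot \int_{\bbr^2\times\bbr_+^2}(\bfv\otimes\bfv)\,F\,d\bfz \;=\; \int_{\bbr^2\times\bbr_+^2} \bff[F]\,F\,d\bfz,
\]
where the right-hand side arises from integrating $\bfv \cdot \nabla_{\bfv}\cdot(\bff[F]F)$ by parts. The Galilean decomposition $\bfv\otimes\bfv = (\bfv-\bfuu)\otimes(\bfv-\bfuu) + \bfuu\otimes\bfuu + \bfuu\otimes(\bfv-\bfuu) + (\bfv-\bfuu)\otimes\bfuu$, together with the identity $\int(\bfv-\bfuu)F\,d\bfz \equiv 0$ (which eliminates the cross terms), collapses the flux into $\rho\bfuu\otimes\bfuu + P$. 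Writing $\bff[F] = \bff_o + \bff_c[F]$, the drag portion produces the stated right-hand side, while the contact-force contribution $\int \bff_c[F]\,F\,d\bfz$ is handled by exchanging $\bfz \leftrightarrow \bfz^*$ inside the nested integral and using $\bfn(\bfx,\bfx^*) = -\bfn(\bfx^*,\bfx)$, mirroring Cases C.1 and C.2 of Lemma \ref{lem:kmb}.

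For the energy equation, multiplying \eqref{eq:p-37} by $\tfrac12|\bfv|^2$ and integrating gives
\[
\partial_t(\rho E) \;+\; \nabla_{\bfx}\cdot \int_{\bbr^2\times\bbr_+^2}\tfrac12|\bfv|^2\bfv\,F\,d\bfz \;=\; \int_{\bbr^2\times\bbr_+^2} (\bfv\cdot\bff[F])\,F\,d\bfz.
\]
Expanding $\bfv = (\bfv - \bfuu) + \bfuu$ in every slot of the cubic flux and again invoking $\int(\bfv-\bfuu)F\,d\bfz = 0$, the surviving pieces assemble exactly into $\rho E\bfuu + P\bfuu + \tfrac12 q$, with $q$ as in \eqref{eq:p-39}. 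On the right-hand side, the drag contribution $\int (\bfv \cdot \bff_o)\,F\,d\bfz$ is kept as-is, and the two contact-force pieces are symmetrized via the same $\bfz \leftrightarrow \bfz^*$ swap used in Cases D.2 and D.3 of Lemma \ref{lem:kmb}; this symmetrization produces both the factor $\tfrac12$ and the exact quadratic forms appearing on the right-hand side of \eqref{eq:hmodel}.

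The main obstacle is the bookkeeping: one must carefully track the Galilean cross terms so that $P$ and $q$ emerge in precisely the form \eqref{eq:p-39}, without spurious residuals. A subtlety is that the swap-symmetry arguments from Lemma \ref{lem:kmb} were performed after an outer $\bfx$-integration, whereas here the balance laws are local in $\bfx$; one must therefore check that the exchange $\bfz \leftrightarrow \bfz^*$ (which leaves the outer $d\bfx^*$-integration in $\bff_c[F]$ intact) still yields the expected cancellation in the momentum case and the expected symmetrization into quadratic sources in the energy case.
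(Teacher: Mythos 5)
Your proposal follows essentially the same route as the paper's proof: moment equations against $1$, $\bfv$, $\tfrac12|\bfv|^2$; the decomposition $\bfv=(\bfv-\bfuu)+\bfuu$ with vanishing cross terms to produce $\rho\bfuu\otimes\bfuu+P$ and the energy flux; the drag term kept as a source; and the $\bfz\leftrightarrow\bfz^*$ exchange to cancel the contact terms in the momentum balance and symmetrize them into quadratic sources in the energy balance. Two remarks worth recording: your flux $\rho E\bfuu+P\bfuu+\tfrac12 q$ is the one actually consistent with the definition \eqref{eq:p-39} of $q$ (which carries no factor $\tfrac12$), so the ``$q$'' in the stated flux of \eqref{eq:hmodel} is off by a normalization; and the subtlety you flag --- that the exchange argument of Lemma \ref{lem:kmb} was carried out under an outer $\bfx$-integration that is absent in these local-in-$\bfx$ balance laws --- is genuine, and the paper's proof passes over it by simply invoking ``the same analysis as Case C.1.''
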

\begin{proof} The local balanced laws will be derived using the temporal evolution of the velocity moments up to the second order.  \newline

\noindent (i)~(Conservation law for mass): We integrate \eqref{eq:p-37} with respect to $\bfz = (\bfv, r, h)$ using far-field decay conditions of $F$ to find 
\[ 
 \int_{\bbr^2 \times \bbr_+^2} \Big(  \partial_t F +  \nabla_{\bfx} \cdot (\bfv F) +   \nabla_{\bfv} \cdot (\bff[F] F)  \Big)  \d \bfz = 0.
\]
Using the definition \eqref{eq:p-38}, this yields
\[
\partial_t  \Big( \int_{\bbr^2 \times \bbr_+^2}  F(\bfz)   \d \bfz  \Big) + \nabla_{\bfx} \cdot \Big(  \int_{\bbr^2 \times \bbr_+^2}  \bfv  F(\bfz)   \d \bfz  \Big) = 0,
\]
i.e., we have the continuity equation:
\[
\partial_t \rho + \nabla_{\bfx} \cdot (\rho \bfuu) = 0.
\]

\vspace{0.2cm}

\noindent (ii)~(Balance law for momentum):~We take an inner product of \eqref{eq:p-37} with $\bfv$ to find 
\begin{equation} \label{eq:p-40}
 \partial_t (\bfv F)  +  \nabla_{\bfx} \cdot (\bfv \otimes \bfv F) +  \nabla_{\bfv} \cdot ( \bfv \otimes \bff[F] F) = \bff[F] F.
\end{equation}
We integrate \eqref{eq:p-40} over $\bbr^2 \times \bbr_+^2$ to get 
\begin{align}
\begin{aligned} \label{eq:p-41}
& \partial_t (\rho \bfuu)+  \nabla_{\bfx} \cdot \Big ( \int_{\bbr^2 \times \bbr_+^2} \bfv \otimes \bfv F  \d \bfz  \Big)  \\
& \hspace{1cm} = \int_{\bbr^2 \times \bbr_+^2} \bff_o F   \d \bfz  +  \int_{\bbr^2 \times \bbr_+^2}   \bff_{c,\bfn}[F] F  \d \bfz  + 
 \int_{\bbr^2 \times \bbr_+^2}  \bff_{c,\bfv}[F] F  \d \bfz  \\
 & \hspace{1cm} =: {\mathcal I}_{61}+ {\mathcal I}_{62}+ {\mathcal I}_{63}.
\end{aligned}
\end{align}
Below, we estimate the terms $\displaystyle \int_{\bbr^2 \times \bbr_+^2} \bfv \otimes \bfv F  d\bfz$ and ${\mathcal I}_{6i}$ one by one. \newline

\noindent $\bullet$~Case F.1: Note that 
\begin{equation} \label{eq:p-42}
\int_{\bbr^2 \times \bbr_+^2} \bfv \otimes  \bfv F  \d \bfz  =   \rho \bfuu \otimes \bfuu + P. 
\end{equation}

\vspace{0.2cm}

\noindent $\bullet$~Case F.2: By direct calculation, one has 
\begin{equation} \label{eq:p-43}
 {\mathcal I}_{61} = \int_{\bbr^2 \times \bbr_+^2}  \rev{\gamma_o}
    \left(\bfu -\bfv\right)\left| \bfu -\bfv\right| F(t,\bfz)  \d \bfz.
\end{equation}

\vspace{0.2cm}

\noindent $\bullet$~Case F.3: By the same analysis as in Case C.1 in the proof of Lemma \ref{lem:kmb}, we have
\begin{align}
\begin{aligned} \label{eq:p-44}
{\mathcal I}_{62} &=   \int_{\bbr^2 \times \bbr_+^2}   \bff_{c,\bfn}[F] F   \d \bfz   \\
&=   \int_{\bbr^{4} \times \bbr_+^{4}}  \gamma_{\bfn}  \chi(|\bfx^* - \bfx| - (r + r^*)) \bfn(\bfx, \bfx^*)  F(\bfz^*) F(\bfz) \d \bfz^*  \d \bfz \\
& = 0.
\end{aligned}
\end{align}
Here, we use the index exchange $\bfz~\leftrightarrow~\bfz^*$.   

\vspace{0.2cm}

\noindent $\bullet$~Case F.4:~Similar to Case F.3, we have
\begin{align}
\begin{aligned}  \label{eq:p-45}
 {\mathcal I}_{63} &=  \int_{\bbr^{4} \times \bbr_+^{4}}   \gamma_{\bfv} [ (\bfv  - \bfv^*) \cdot \bfn(\bfx, \bfx^*)] \bfn(\bfx, \bfx^*)  F(\bfz^*) F(\bfz) \d\bfz^*  \d \bfz  \\
&=  -\int_{\bbr^{4} \times \bbr_+^{4}}   \gamma_{\bfv} [ (\bfv^*  - \bfv) \cdot \bfn(\bfx, \bfx^*)] \bfn(\bfx, \bfx^*) F(\bfz^*) F(\bfz) \d\bfz^* \d\bfz \\
& = 0.
\end{aligned}
\end{align}
%Thus, we have 
%\begin{equation} \label{eq:p-46}
%{\mathcal I}_{63} = 0.
%\end{equation}
In \eqref{eq:p-41}, we combine in \eqref{eq:p-42}, \eqref{eq:p-43}, \eqref{eq:p-44} and \eqref{eq:p-45} to get the desired balance law for momentum. 

\vspace{0.2cm}

\noindent (iii)~(Balance law for energy): It follows from \eqref{eq:p-32} with integration over $\bbr^2 \times \bbr_+^2$ that 
\begin{align}
\begin{aligned} \label{eq:p-47}
& \partial_t (\rho E_K) +  \nabla_{\bfx} \cdot \Big( \rho E_K \bfuu + P \bfuu + q   \Big) \\
& \hspace{0.5cm} = \int_{\mathbb{R}^2 \times \bbr_+^2}  (\bfv \cdot \bff_o) F(\bfz)  \d \bfz  + \int_{\mathbb{R}^2 \times \bbr_+^2}   (\bfv \cdot \bff_{c,\bfn}[F])(\bfz) F(\bfz)  \d \bfz  
\\
& \hspace{0.5cm} + \int_{\mathbb{R}^2 \times \bbr_+^2} (\bfv \cdot  \bff_{c,\bfv}[F])(\bfz) F(\bfz)  \d \bfz \\
& \hspace{0.5cm} =:  {\mathcal I}_{71} + {\mathcal I}_{72} + {\mathcal I}_{73}.
\end{aligned}
\end{align}
Next, we estimate the terms one by one.  \newline

\noindent $\bullet$~Case G.1: By direct calculation, one has 
\begin{equation} \label{eq:p-48}
{\mathcal I}_{71}  = \int_{\mathbb{R}^2 \times \bbr_+^2}  (\bfv \cdot \bff_o) F(\bfz)  \d \bfz  =   \int_{\mathbb{R}^2 \times \bbr_+^2} \gamma_o \bfv \cdot  \left(\bfu -\bfv \right)\left| \bfu -\bfv \right| F(\bfz)   \d \bfz .
 \end{equation}

\vspace{0.2cm}

\noindent $\bullet$~Case G.2: By direct calculation, we have
\begin{align}
\begin{aligned}  \label{eq:p-49}
{\mathcal I}_{72} & = \int_{\mathbb{R}^4 \times \bbr_+^4}   \gamma_{\bfn}  \chi(|\bfx^* - \bfx| - (r + r^*)) \bfv \cdot \bfn(\bfx, \bfx^*)  F( \bfz^*) F(\bfz) \d\bfz^*  \d \bfz   \\
&=  -\int_{\mathbb{R}^4 \times \bbr_+^4}   \gamma_{\bfn}  \chi(|\bfx^* - \bfx| - (r + r^*)) \bfv^* \cdot \bfn(\bfx, \bfx^*)  F( \bfz^*) F(\bfz) \d\bfz^*  \d \bfz    \\
&= \frac{1}{2} \int_{\mathbb{R}^4 \times \bbr_+^4}   \gamma_{\bfn}  \chi(|\bfx^* - \bfx| - (r + r^*))( \bfv - \bfv^*)  \cdot \bfn(\bfx, \bfx^*)  F( \bfz^*) F(\bfz) \d \bfz^*  \d \bfz.
\end{aligned}
\end{align}

\vspace{0.2cm}

\noindent $\bullet$~Case G.3: Similarly, we have
\begin{align}
\begin{aligned} \label{eq:p-50}
{\mathcal I}_{73} & = \int_{\mathbb{R}^4 \times \bbr_+^4}   \gamma_{\bfv} [ (\bfv  - \bfv^*) \cdot \bfn(\bfx, \bfx^*)]  \bfv \cdot  \bfn(\bfx, \bfx^*) F(\bfz) F(\bfz^*) \d\bfz^* \d \bfz  \\
&= -\int_{\mathbb{R}^4 \times \bbr_+^4}   \gamma_{\bfv} [ (\bfv  - \bfv^*) \cdot \bfn(\bfx, \bfx^*)] \bfv^* \cdot  \bfn(\bfx, \bfx^*) F(\bfz) F(\bfz^*) \d\bfz^*  \d \bfz  \\
& = \frac{1}{2}  \int_{\mathbb{R}^4 \times \bbr_+^4}  \gamma_{\bfv}  \big|  (\bfv  - \bfv^*) \cdot \bfn(\bfx(t), \bfx^*) \big |^2   F(\bfz) F(\bfz^*) \d\bfz^*  \d \bfz.
\end{aligned}
\end{align}
In \eqref{eq:p-47}, we combine \eqref{eq:p-48}, \eqref{eq:p-49} and \eqref{eq:p-50} to find the desired balance law for energy. 
\end{proof}

\rev{To close the system, an appropriate ansatz or assumption for hydrodynamic equilibrium is needed. In the literature of classical kinetic equations, several closure conditions have been used, for example, the local Maxwellian assumption and the maximum entropy principle for the derivation of conservation laws from Boltzmann equation \cite{bardos1991fluid,saint2009hydrodynamic,golse2005boltzmann}. For Vlasov equation describing flocking dynamics, one may approximate the kinetic equation in a close-to-flocking regime in which the velocity configuration is close to velocity alignment. Thus, the mono-kinetic ansatz is natural to take as a closure condition. For the kinetic Cucker-Smale model, this mono-kinetic ansatz was rigorously justified in \cite{figalli2018rigorous}.  Following \cite{figalli2018rigorous}, we} assume the mono-kinetic ansatz for $F$:
\begin{equation} \label{eq:rhoPG}
F(t, \bfx, \bfv,r, h) = \rho(t, \bfx) \delta(\bfv - \bfuu(t, \bfx)) \otimes P(r) \otimes \Gamma(h). 
\end{equation}
where $\delta$ is the two-dimensional Dirac delta function, $P(r)$ is the floe size distribution in \eqref{floe_size_pdf}, and $\Gamma(h)$ is the Gamma distribution in \eqref{floe_thickness_pdf} for thickness\rev{; see \ref{app:a} for details}. 
In this case,  we apply the \rev{property of Dirac delta function} and substitute the ansatz \eqref{eq:rhoPG} into  \eqref{eq:p-39} to obtain
\begin{align}
\begin{aligned} \label{eq:rhoPG-1}
p_{ij} &=   \int_{\bbr^2 \times \bbr_+^2}(v_i - u_i)(v_j - u_j) F  \d \bfz  \\
&=  \int_{\bbr^2 \times \bbr_+^2}(v_i - u_i)(v_j - u_j) \rho(t, \bfx) \delta(\bfv - \bfuu(t, \bfx)) \otimes P(r) \otimes \Gamma(h) \d \bfz  \\
&= 0, \quad i, j \in [2], \\
 q_i &= \int_{\bbr^2 \times \bbr_+^2} (v_i - u_i)  | \bfv - \bfuu |^2 F  \d \bfz  \\
 &=  \int_{\bbr^2 \times \bbr_+^2} (v_i - u_i)  | \bfv - \bfuu |^2  \delta(\bfv - \bfuu(t, \bfx)) \otimes P(r) \otimes \Gamma(h)  \d \bfz  \\
 &=0, \quad i \in [2].
 \end{aligned}
 \end{align}

 In the sequel, we simplify the balanced laws \eqref{eq:hmodel} using the mono-kinetic ansatz \eqref{eq:rhoPG}, \eqref{eq:rhoPG-1}, and the unit mass condition. In this case, \rev{using \eqref{eq:snot}}, the non-local sources can be rewritten as follows.
 \begin{align}
\begin{aligned} \label{eq:p-51}
& \bullet~P = 0, \quad q = 0, \\
& \bullet~ \int_{\bbr^2 \times \bbr_+^2} \rev{\gamma_o} \left(\bfu -\bfv\right)\left| \bfu -\bfv\right| F(\bfz)  \d \bfz \\
& \hspace{0.7cm} = \int_{\bbr^2 \times \bbr_+^2} \rev{\gamma_o} \left(\bfu -\bfv\right)\left| \bfu -\bfv\right|  \rho(t, \bfx) \delta_D(\bfv - \bfuu(t, \bfx)) \otimes P(r) \otimes \Gamma(h)  \d \bfz  \\
& \hspace{0.7cm} = \rev{\alpha} (\bfu -\bfuu)\left| \bfu -\bfuu \right|, \\
&  \bullet~\int_{\mathbb{R}^2 \times \bbr_+^2} \rev{\gamma_o} \bfv \cdot  \left(\bfu -\bfv \right)\left| \bfu -\bfv \right| F(\bfz)   \d \bfz  \\
& \hspace{0.7cm} =  \int_{\mathbb{R}^2 \times \bbr_+^2} \rev{\gamma_o} \bfv \cdot  \left(\bfu -\bfv \right)\left| \bfu -\bfv \right| \rho(t, \bfx) \delta_D(\bfv - \bfuu(t, \bfx)) \otimes P(r) \otimes \Gamma(h)     \d \bfz  \\
& \hspace{0.7cm} = \rev{\alpha} \bfuu \cdot  \left(\bfu -\bfuu \right)\left| \bfu -\bfuu \right|, 
\end{aligned}
\end{align}
and
\begin{align}
\begin{aligned} \label{eq:p-51-1}
& \bullet~ \int_{\bbr^{4} \times \bbr_+^{4}}  \gamma_{\bfn}  \chi(|\bfx^* - \bfx| - (r + r^*)) \bfn(\bfx, \bfx^*)  F(\bfz^*) F(\bfz) \d \bfz^* \d \bfz   \\
& \hspace{0.7cm} =  \int_{\bbr^{4} \times \bbr_+^{4}}  \gamma_{\bfn}  \chi(|\bfx^* - \bfx| - (r + r^*)) \bfn(\bfx, \bfx^*) \rho(t, \bfx^*) \delta_D(\bfv - \bfuu(t, \bfx^*)) \otimes P(r^*) \otimes \Gamma(h^*)    \\
& \hspace{1cm}  \times \rho(t, \bfx) \delta_D(\bfv - \bfuu(t, \bfx)) \otimes P(r) \otimes \Gamma(h)   \d \bfz^* \d \bfz  \\
& \hspace{0.7cm} =\rho \int_{\partial B(\bfx, 2r^{\infty})}  \rev{\kappa_1} \chi(|\bfx^* - \bfx| - (r + r^*))  \bfn(\bfx, \bfx^*)  \rho (t,\bfx^*)  d\sigma_{\bfx^*}, \\
& \bullet~ \frac{1}{2} \int_{\mathbb{R}^4 \times \bbr_+^4}   \gamma_{\bfn}  \chi(|\bfx^* - \bfx| - (r + r^*))( \bfv - \bfv^*)  \cdot \bfn(\bfx, \bfx^*)  F( \bfz^*) F(\bfz) \d\bfz^*   \d \bfz  \\
& \hspace{1cm} = \rev{\frac{1}{2}}\int_{\partial B(\bfx, 2r^{\infty})} \rev{\kappa_1} \chi(|\bfx^* - \bfx| - (r + r^*))  ( \bfuu(\bfx) - \bfuu(\bfx^*))  \cdot \bfn(\bfx, \bfx^*)  \rho(t,\bfx^*)  \d\sigma_{\bfx^*},  \\
&  \bullet~\frac{1}{2}  \int_{\mathbb{R}^4  \times \bbr_+^4}    \gamma_{\bfv}  \big|  (\bfv  - \bfv^*) \cdot \bfn(\bfx, \bfx^*) \big |^2   F(\bfz) F(\bfz^*) \d\bfz^*  \d \bfz  \\
& \hspace{1cm} = \rev{\frac{1}{2}} \int_{\partial B(\bfx, 2r^{\infty})}   \rev{\kappa_2}  \big|  (\bfuu  - \bfuu^*) \cdot \bfn(\bfx, \bfx^*) \big |^2  \rho(t, \bfx^*) \d\sigma_{\bfx^*},
\end{aligned}
\end{align}
where $\partial B_{2r^{\infty}}(\bfx)$ is the ball with a center $\bfx$ and a radius $2r^{\infty}$, and $d\sigma_{\bfx^*}$ is the surface measure on the circle $\partial B(\bfx, 2r^{\infty})$.  \newline

Finally, we combine \eqref{eq:hmodel},  \eqref{eq:p-51}, and \eqref{eq:p-51-1} to find 
\begin{equation} \label{eq:hpgmodel}
\begin{cases}
\displaystyle \partial_t \rho + \nabla_{\bfx} \cdot (\rho \bfuu) = 0, \\
\displaystyle  \partial_t (\rho \bfuu)+  \nabla_{\bfx} \cdot \Big ( \rho \bfuu \otimes \bfuu  \Big) = \rev{\alpha}(\bfu -\bfuu)\left| \bfu -\bfuu \right |,  
%+ \rho \int_{\partial B(\bfx, 2r^{\infty})}  \gamma_{\bfn}  \bfn(\bfx, \bfx^*)  \rho (t,\bfx^*)  d\sigma_{\bfx^*} 
\\
\displaystyle  \partial_t (\rho E)+  \nabla_{\bfx} \cdot \Big ( \rho \bfuu E \Big) =  \rev{\alpha} \bfuu \cdot  \left(\bfu -\bfuu \right)\left| \bfu -\bfuu \right|  \\
% \displaystyle  \hspace{1cm}+   \frac{\rho}{2}\int_{\partial B(\bfx, 2r^{\infty})}  \gamma_{\bfn}  ( \bfuu(\bfx) - \bfuu(\bfx^*))  \cdot \bfn(\bfx, \bfx^*) \rho(t,\bfx^*)  \d\sigma_{\bfx^*}  \\
 \displaystyle  +   \rev{\frac{1}{2}}\int_{\partial B(\bfx, 2r^{\infty})} \rev{\kappa_1} \chi(|\bfx^* - \bfx| - (r + r^*))  ( \bfuu(\bfx) - \bfuu(\bfx^*))  \cdot \bfn(\bfx, \bfx^*)  \rho(t,\bfx^*)  \d\sigma_{\bfx^*} \\
  \displaystyle +  \rev{\frac{1}{2}} \int_{\partial B(\bfx, 2r^{\infty})}  \rev{\kappa_2}  \big|  (\bfuu  - \bfuu^*) \cdot \bfn(\bfx, \bfx^*) \big |^2  \rho(t, \bfx^*) \d\sigma_{\bfx^*}.
\end{cases}
\end{equation}
In particular, for the case $\bfu = 0$ in \eqref{eq:hpgmodel}, we have
\begin{equation} \label{eq:hpgmodel0}
\begin{cases}
\displaystyle \partial_t \rho + \nabla_{\bfx} \cdot (\rho \bfuu) = 0, \\
\displaystyle  \partial_t (\rho \bfuu)+  \nabla_{\bfx} \cdot \Big ( \rho \bfuu \otimes \bfuu  \Big)  = - \rev{\alpha} \bfuu \left| \bfuu \right |, \\
% + \rho \underbrace{ \int_{\partial B(\bfx, 2r^{\infty})}  \gamma_{\bfn}  \bfn(\bfx, \bfx^*)  \rho (t,\bfx^*)  d\sigma_{\bfx^*} }_{S_1(\bfx)}, \\
\displaystyle  \partial_t (\rho E)+  \nabla_{\bfx} \cdot \Big ( \rho  \bfuu E \Big) =   - \rev{\alpha} |\bfuu|^3  \\
 \displaystyle  +  \rev{\frac{1}{2}} \int_{\partial B(\bfx, 2r^{\infty})}   \rev{\kappa_1}   \chi(|\bfx^* - \bfx| - (r + r^*))  ( \bfuu(\bfx) - \bfuu(\bfx^*))  \cdot \bfn(\bfx, \bfx^*) \rho(t,\bfx^*)  \d\sigma_{\bfx^*}  \\
  \displaystyle + \rev{\frac{1}{2}} \int_{\partial B(\bfx, 2r^{\infty})}   \rev{\kappa_2}  \big|  (\bfuu  - \bfuu^*) \cdot \bfn(\bfx, \bfx^*) \big |^2  \rho(t, \bfx^*) \d\sigma_{\bfx^*}.
\end{cases}
\end{equation}
%Note that 
%\begin{align*}
%\begin{aligned}
%\int_{\bbr^2} \rho S_1(\bfx) d \bfx &= \int_{\bbr^2} \int_{\partial B(\bfx, 2r^{\infty})}  \gamma_{\bfn}  \bfn(\bfx, \bfx^*)  \rho (t,\bfx)  \rho (t,\bfx^*)  d\sigma_{\bfx^*} d \bfx \\
%& =  -\int_{\bbr^2} \int_{\partial B(\bfx, 2r^{\infty})}  \gamma_{\bfn}  \bfn(\bfx, \bfx^*)  \rho (t,\bfx)  \rho (t,\bfx^*)  d\sigma_{\bfx^*} d \bfx  \\
%& = -\int_{\bbr^2}  \rho S_1(\bfx) d \bfx.
%\end{aligned}
%\end{align*}
%Hence, we have
%\[ \int_{\bbr^2}  \rho S_1(\bfx) d \bfx = 0. \]
%
%

\subsection{Macroscopic behavior of the hydrodynamic model}
Similar to the case for the kinetic model, we define energy functionals:
\begin{align}
\begin{aligned} \label{eq:he}
\E &:= \E_K + \E_P,  \quad   \E_K := \frac{1}{2} \int_{\bbr^2} \rho(\bfx) |\bfuu(\bfx)|^2 d \bfx, \\
\E_P &:=  \frac{1}{4} \int_{\mathbb{R}^8  \times \bbr_+^4}  \gamma_{\bfn}  \chi(|\bfx^* - \bfx| - (r + r^*))^2  F(\bfz) F(\bfz^*) \d\bfz^*  \d \bfz \d\bfx^*  \d \bfx. %\rho(\bfx) \rho(\bfx^*) \d \bfx \d \bfx^*.
\end{aligned}
\end{align}

With the continuum model developed in Lemma \ref{lem:conteq}, 
in the following lemma, we establish the temporal evolutions of $\E, \E_K$, and $\E_P$, respectively. 
\begin{lemma}[Energy behavior]
Let $(\rho, \bfuu)$ be a global smooth solution to the system \eqref{eq:hpgmodel0}. Then, we have
\begin{align*}
\begin{aligned}
& (i)~\frac{d\E_K}{dt} =  -  \int_{\bbr^2} \rho |\bfuu|^3 \d \bfx \\
&~~ +    \frac{1}{2}\int_{\bbr^2  \times  \partial B(\bfx, 2r^{\infty})}  \gamma_{\bfn}  \chi(|\bfx^* - \bfx| - (r + r^*))  ( \bfuu(\bfx) - \bfuu(\bfx^*))  \cdot \bfn(\bfx, \bfx^*)  \rho(\bfx) \rho(\bfx^*)  \d\sigma_{\bfx^*}  \d \bfx  \\
&~~+  \frac{1}{2}\int_{\bbr^2  \times  \partial B(\bfx, 2r^{\infty})}   \gamma_{\bfv}  \big|  (\bfuu  - \bfuu^*) \cdot \bfn(\bfx, \bfx^*) \big |^2  \rho(\bfx) \rho(\bfx^*) \d\sigma_{\bfx^*} \d \bfx, \\
& (ii)~ \frac{d \E_P}{dt} =  - \frac{1}{2}\int_{\bbr^2  \times  \partial B(\bfx, 2r^{\infty})}  \gamma_{\bfn}  \chi(|\bfx^* - \bfx| - (r + r^*))  ( \bfuu(\bfx) - \bfuu(\bfx^*))  \cdot \bfn(\bfx, \bfx^*)  \rho(\bfx) \rho(\bfx^*)  \d\sigma_{\bfx^*}  \d \bfx, \\
& (iii)~ \frac{d \E}{dt} =  -  \int_{\bbr^2} \rho |\bfuu|^3 \d \bfx +  \frac{1}{2}\int_{\bbr^2  \times  \partial B(\bfx, 2r^{\infty})}   \gamma_{\bfv}  \big|  (\bfuu  - \bfuu^*) \cdot \bfn(\bfx, \bfx^*) \big |^2  \rho(\bfx) \rho(\bfx^*) \d\sigma_{\bfx^*} \d \bfx.
\end{aligned}
\end{align*}
\end{lemma}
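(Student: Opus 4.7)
The plan is to obtain each identity by combining the hydrodynamic balance laws in \eqref{eq:hpgmodel0} with the closure \eqref{eq:rhoPG}, and to exploit the fact that, under the mono-kinetic ansatz, the relations \eqref{eq:rhoPG-1} force $P = 0$ and $q = 0$, so that $\rho E = \tfrac12 \rho|\bfuu|^2$ and hence $\int_{\bbr^2}\rho E\,\d\bfx = \E_K$.

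\textbf{Step 1 (identity for $\E_K$).} Integrate the third line of \eqref{eq:hpgmodel0} over $\bfx\in\bbr^2$. The transport term $\nabla_{\bfx}\cdot(\rho\bfuu E)$ drops out by the divergence theorem together with decay at spatial infinity. Since $E = E_K$ under the closure, the left-hand side becomes $\tfrac{d\E_K}{dt}$. The first source term on the right immediately yields the drag contribution $-\int \rho|\bfuu|^3\,\d\bfx$, while the two remaining source terms are already in the boundary-integral form stated in (i). This step is essentially algebraic.

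\textbf{Step 2 (identity for $\E_P$).} I would start from Lemma \ref{lem:dEp}, which gives the time derivative of the potential-energy functional at the kinetic level. Substituting the ansatz \eqref{eq:rhoPG}, the two Dirac deltas collapse the $\bfv$ and $\bfv^*$ integrations, sending $\bfv\mapsto\bfuu(\bfx)$, $\bfv^*\mapsto\bfuu(\bfx^*)$; the normalizations $\int P(r)\d r = \int \Gamma(h)\d h = 1$ (see \ref{app:a}) eliminate the $r,h,r^*,h^*$ integrals; and the compact support of $\chi$ confines the remaining $\bfx^*$ integration to the collision disk $B(\bfx,2r^{\infty})$, producing the $\partial B(\bfx,2r^{\infty})$ boundary integral as in (ii). As a sanity check, one can re-derive the same identity directly, differentiating $\E_P$ in \eqref{eq:he} in $t$, using the continuity equation $\eqref{eq:hpgmodel0}_{1}$ to convert the time derivative to a spatial divergence, and integrating by parts via
\begin{equation*}
\nabla_{\bfx}\Bigl(\int_0^{|\bfx^*-\bfx|-(r+r^*)}\chi(\eta)\,\d\eta\Bigr) = -\chi\bigl(|\bfx^*-\bfx|-(r+r^*)\bigr)\,\bfn(\bfx^*,\bfx),
\end{equation*}
which is the identity already exploited in the proof of Lemma \ref{lem:dEp}.

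\textbf{Step 3 (identity for $\E$).} Add (i) and (ii). The two $\gamma_{\bfn}$-integrals, namely the strain-energy production term in (i) and the corresponding potential-energy dissipation term in (ii), are equal in magnitude and opposite in sign, so they cancel exactly. What survives is the drag loss $-\int \rho|\bfuu|^3\,\d\bfx$ together with the normal-dashpot dissipation proportional to $\gamma_{\bfv}$, exactly as claimed.

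\textbf{Main obstacle.} Steps 1 and 3 are essentially bookkeeping. The delicate step is Step 2: one must track the symmetrization $\bfz\leftrightarrow\bfz^*$ together with the antisymmetry $\bfn(\bfx^*,\bfx) = -\bfn(\bfx,\bfx^*)$, and verify that the collapse onto the collision boundary is carried out consistently, so that the factor $\tfrac12$ in front of the boundary integral is neither doubled nor halved. Once the coefficient bookkeeping in Step 2 is resolved, the whole lemma follows.
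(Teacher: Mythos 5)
Your proposal is correct and follows essentially the same route as the paper: part (i) is obtained by integrating the energy balance law of \eqref{eq:hpgmodel0} over $\bbr^2$ (with $\rho E = \rho E_K$ since $P=q=0$ under the mono-kinetic closure), part (ii) by differentiating $\E_P$ in time, and part (iii) by adding and cancelling the $\gamma_{\bfn}$ terms. The only cosmetic difference is in Step 2: the paper recomputes $\frac{d\E_P}{dt}$ directly via the chain rule on $\chi^2$ along the flow (your "sanity check" computation), rather than citing Lemma \ref{lem:dEp} and substituting the ansatz, but the two are the same calculation.
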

\begin{proof} 
(i)~We integrate $\eqref{eq:hpgmodel0}_3$ with respect to $\bfx$ over $\bbr^2$ to get 
\begin{align*}
\begin{aligned}
\frac{d\E_K}{dt} &= \frac{d}{dt} \int_{\bbr^2}   \rho E_K \d \bfx = \int_{\bbr^2}  \partial_t (\rho E_K)+  \nabla_{\bfx} \cdot \Big ( \rho E_K \bfuu \Big)   \d \bfx \\
&=  -  \int_{\bbr^2} \rho |\bfuu|^3 \d \bfx  \\
&~+    \frac{1}{2}\int_{\bbr^2  \times  \partial B(\bfx, 2r^{\infty})}  \gamma_{\bfn} \chi(|\bfx^* - \bfx| - (r + r^*))  ( \bfuu(\bfx) - \bfuu(\bfx^*))  \cdot \bfn(\bfx, \bfx^*)  \rho(\bfx) \rho(\bfx^*)  \d\sigma_{\bfx^*}  \d \bfx  \\
&~+  \frac{1}{2}\int_{\bbr^2  \times  \partial B(\bfx, 2r^{\infty})}   \gamma_{\bfv}  \big|  (\bfuu  - \bfuu^*) \cdot \bfn(\bfx, \bfx^*) \big |^2   \rho(\bfx) \rho(\bfx^*)  \d\sigma_{\bfx^*} \d \bfx.
\end{aligned}
\end{align*}
(ii)~Following \eqref{eq:dddt}, we derive 
%We integrate $\eqref{eq:hpgmodel0}_3$ with respect to $\bfx$ over $\bbr^2$ to get 
\begin{align*}
\begin{aligned}
\frac{d\E_P}{dt} &= \frac{d}{dt} \frac{1}{4} \int_{\mathbb{R}^8  \times \bbr_+^4}  \gamma_{\bfn}  \chi(|\bfx^* - \bfx| - (r + r^*))^2  F(\bfz) F(\bfz^*) \d\bfz^*  \d \bfz \d\bfx^*  \d \bfx \\
&= \frac{1}{4} \int_{\mathbb{R}^8  \times \bbr_+^4}  \gamma_{\bfn}  2\chi(|\bfx^* - \bfx| - (r + r^*)) \frac{d \chi(|\bfx^* - \bfx| - (r + r^*))}{dt}  F(\bfz) F(\bfz^*) \d\bfz^*  \d \bfz \d\bfx^*  \d \bfx \\
&= \frac{1}{2} \int_{\mathbb{R}^8  \times \bbr_+^4}  \gamma_{\bfn}  \chi(|\bfx^* - \bfx| - (r + r^*)) ( \bfv(\bfx^*) - \bfv(\bfx))  \cdot \bfn(\bfx, \bfx^*)  F(\bfz) F(\bfz^*) \d\bfz^*  \d \bfz \d\bfx^*  \d \bfx \\
&= - \frac{1}{2} \int_{\mathbb{R}^8  \times \bbr_+^4}  \gamma_{\bfn}  \chi(|\bfx^* - \bfx| - (r + r^*)) ( \bfv(\bfx) - \bfv(\bfx^*))  \cdot \bfn(\bfx, \bfx^*)  F(\bfz) F(\bfz^*) \d\bfz^*  \d \bfz \d\bfx^*  \d \bfx \\
&=  - \frac{1}{2}\int_{\bbr^2  \times  \partial B(\bfx, 2r^{\infty})}  \gamma_{\bfn} \chi(|\bfx^* - \bfx| - (r + r^*))  ( \bfuu(\bfx) - \bfuu(\bfx^*))  \cdot \bfn(\bfx, \bfx^*)  \rho(\bfx) \rho(\bfx^*)  \d\sigma_{\bfx^*}  \d \bfx.
\end{aligned}
\end{align*}
(iii)~ This is a straightforward result from (i), (ii), and the definition \eqref{eq:he}.
\end{proof}

Similar to Theorem \ref{thm:E0}, we apply Barbalat's lemma and obtain the following assertion. 

\begin{theorem} %\label{thm:hEk} 
Suppose that ocean velocity, initial data and system parameters satisfy
\[  \bfu = 0, \quad \E(0) < \infty,  \]
and let  $(\rho, \rho\bfuu, \rho E)$ be a global smooth solution to \eqref{eq:hpgmodel0}.
 Then, we have
\[ \lim_{t \to \infty} \E_K(t) = 0.  \]
\end{theorem}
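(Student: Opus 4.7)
\medskip

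The plan is to mirror the kinetic proof of Theorem \ref{thm:E0} at the hydrodynamic level: extract an integrable-in-time dissipation from the energy identity, invoke Barbalat's lemma to push a cubic-velocity functional to zero, and then interpolate via Hölder's inequality to control $\E_K$.

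First, I would specialize part (iii) of the preceding energy lemma to $\bfu = 0$. Since $\kappa_2 < 0$ by \eqref{eq:beta} and hence $\gamma_{\bfv} < 0$, the relation becomes
\begin{equation*}
\frac{d\E}{dt} \;=\; -\int_{\bbr^2} \rho |\bfuu|^3 \d\bfx \;-\; \frac{1}{2}\int_{\bbr^2\times \partial B(\bfx,2r^\infty)} |\gamma_{\bfv}|\,\big|(\bfuu-\bfuu^*)\cdot \bfn(\bfx,\bfx^*)\big|^2 \rho(\bfx)\rho(\bfx^*)\,\d\sigma_{\bfx^*}\d\bfx \;=:\; -\Lambda(t) \le 0.
\end{equation*}
Combined with the nonnegativity of $\E_K$ and $\E_P$ from \eqref{eq:he}, integration in time yields $\E(t) + \int_0^t \Lambda(s)\d s = \E(0)$, so $\E$ is uniformly bounded by $\E(0)$ and, in particular,
\begin{equation*}
\int_0^\infty \Lambda_1(t)\,\d t \;:=\; \int_0^\infty \int_{\bbr^2} \rho |\bfuu|^3\,\d\bfx\,\d t \;\le\; \E(0) \;<\; \infty.
\end{equation*}

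Next, I would establish uniform continuity of $t\mapsto \Lambda_1(t)$ so that Barbalat's lemma applies, exactly as in the kinetic argument. Writing $\rho|\bfuu|^3 = \rho^{-2}|\rho\bfuu|^3$, I would use the continuity equation $\eqref{eq:hpgmodel0}_1$ and the momentum equation $\eqref{eq:hpgmodel0}_2$ to differentiate $\Lambda_1$ and bound $|\Lambda_1'(t)|$ in terms of the total mass, the drag term, and the contact-force convolution over $\partial B(\bfx,2r^\infty)$. As in Theorem \ref{thm:E0}, this will require an a priori boundedness hypothesis (for instance, compact support of $\rho(t,\cdot)$ uniformly in $t$, or a uniform sup-norm bound on $\bfuu$ and $\rho$) to absorb the nonlocal terms and to ensure that the convective flux $\nabla_{\bfx}\cdot(\rho\bfuu E)$ contributes nothing after integration in $\bfx$. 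This is the main obstacle: the hydrodynamic system is not globally well-understood, and controlling $\frac{d}{dt}\Lambda_1$ uniformly in time without such an assumption seems out of reach; I would therefore add it explicitly, in line with the remark following Theorem \ref{thm:E0}. With that in hand, Barbalat gives $\lim_{t\to\infty}\Lambda_1(t) = 0$.

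Finally, I would close the argument by Hölder's inequality. Since $\partial_t\int_{\bbr^2}\rho\,\d\bfx = 0$ from $\eqref{eq:hpgmodel0}_1$, the total mass $M := \int_{\bbr^2}\rho(t,\bfx)\d\bfx$ is conserved, and writing $\rho|\bfuu|^2 = (\rho|\bfuu|^3)^{2/3}\rho^{1/3}$ yields
\begin{equation*}
\E_K(t) \;=\; \frac{1}{2}\int_{\bbr^2} \rho |\bfuu|^2\,\d\bfx \;\le\; \frac{1}{2}\Big(\int_{\bbr^2} \rho |\bfuu|^3\,\d\bfx\Big)^{2/3}\Big(\int_{\bbr^2} \rho\,\d\bfx\Big)^{1/3} \;=\; \frac{M^{1/3}}{2}\,\Lambda_1(t)^{2/3}.
\end{equation*}
Letting $t\to\infty$ and using $\Lambda_1(t)\to 0$ concludes $\lim_{t\to\infty}\E_K(t) = 0$, as desired. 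The structural parallel with the kinetic proof is exact; only the uniform-continuity step requires the extra hydrodynamic regularity/support hypothesis noted above.
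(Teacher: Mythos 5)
Your proposal follows essentially the same route as the paper: specialize the energy identity to $\bfu=0$, integrate to obtain $\int_0^\infty \Lambda_1\,\d t \le \E(0)$, invoke Barbalat's lemma via uniform continuity of $\Lambda_1$, and conclude through the H\"older interpolation $\E_K \le \tfrac{1}{2}M^{1/3}\Lambda_1^{2/3}$. In fact you supply two details the paper leaves implicit — the explicit H\"older step passing from $\Lambda_1\to 0$ to $\E_K\to 0$, and the observation that bounding $|\Lambda_1'(t)|$ requires an a priori boundedness/support hypothesis analogous to the one in Theorem \ref{thm:E0} — both of which are accurate and strengthen the argument rather than diverge from it.
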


\begin{proof} It follows from the above Lemma that 
\begin{align} \label{eq:dE}
\begin{aligned} 
\frac{d \E}{dt} &=  -  \int_{\bbr^2} \rho |\bfuu|^3 \d \bfx +  \frac{1}{2}\int_{\bbr^2  \times  \partial B(\bfx, 2r^{\infty})}   \gamma_{\bfv}  \big|  (\bfuu  - \bfuu^*) \cdot \bfn(\bfx, \bfx^*) \big |^2  \rho(\bfx) \rho(\bfx^*) \d\sigma_{\bfx^*} \d \bfx. \\
\end{aligned}
\end{align}  
We define energy production terms: 
\begin{align*}
\begin{aligned}
\Lambda(t) &:= \Lambda_1(t) + \Lambda_2(t),  \quad \Lambda_1 := \int_{\bbr^2} \rho |\bfuu|^3 \d \bfx,        \\
\Lambda_2 &:=   \frac{1}{2}\int_{\bbr^2  \times  \partial B(\bfx, 2r^{\infty})} |\gamma_{\bfv}|  \big|  (\bfuu  - \bfuu^*) \cdot \bfn(\bfx, \bfx^*) \big |^2  \rho(\bfx) \rho(\bfx^*) \d\sigma_{\bfx^*} \d \bfx.
\end{aligned}
\end{align*}
Then, we integrate the equation \eqref{eq:dE} to find 
\begin{equation*} 
\E(t) + \int_0^t \Lambda(s) ds = \E(0), \quad t \geq 0.
\end{equation*}
This yields
\begin{equation*} 
\sup_{0 \leq t < \infty} \E_K(t) \leq E(t) \leq \E(0), 
\end{equation*}
and 
\begin{equation*} 
\int_0^{\infty} \Lambda_1(t) dt = \int_0^{\infty} \int_{\bbr^2} \rho |\bfuu|^3 \d \bfx dt \leq \E(0) < \infty.
\end{equation*}
In order to use \rev{Barbalat's} lemma, it suffices to check that
\begin{equation*} 
\sup_{0< t < \infty} \Big| \frac{d}{dt} \Lambda_1(t)  \Big| < \infty. 
\end{equation*}
This implies the uniform continuity of the functional $\Lambda_1$.  
\rev{We use} \rev{Barbalat's} lemma \rev{to find} the desired estimate:
\begin{equation*}  
\lim_{t \to \infty} \E_K(t) = 0.
\end{equation*}
\end{proof}

\section{Numerical simulations} \label{sec:num}
In this section, we \rev{present} several numerical simulations for the particle model \rev{together with a case of the hydrodynamic model for comparison of the floe concentration in the regime of a large number of floes}. For numerical simulations, we use the forward Euler scheme \cite{butcher2016numerical} to discretize \eqref{eq:dem} and then verify the first two results of Lemma \ref{lem:sumparticle} and Theorem \ref{thm:v0} in the discrete setting.

\subsection{Forward Euler scheme for the particle model}
For $T \in (0, \infty)$, we divide the finite time interval $[0, T]$ into $N_t$ uniform time-step $\Delta t = T/N_t$. The discrete time points are denoted as $t_l = l\,\Delta t$ for $l = 0,1,\dots,N_t$. Let 
\[ X^i_l := \bfx^i(t_l) \quad \mbox{and} \quad V^i_l := \bfv^i(t_l) \]
denote the numerical solution of the position and velocity of particle $i$ at time $t_l$. Let $U^i_l := \bfu(t_l, \bfx^i)$ denote the ocean velocity in the position $\bfx^i$ at time $t_l$.  Starting from initial conditions $X^i_0 = \bfx^i(0)$ and $V^i_0 = \bfv^i(0)$, the forward Euler scheme updates the solution as follows:
\begin{align} 
\begin{aligned} \label{eq:dem_fe}
    X^i_{l+1} &= X^i_l + \Delta t\,V^i_l,  \quad l = 0,1,\dots, N_t-1,  \\
    V^i_{l+1} &= V^i_l + \frac{\Delta t}{m^i} \left[ \frac{1}{n} \sum_{j=1}^{n} \bff_c^{ij}(t_l) + \alpha^i (U^i_l - V^i_l) |U^i_l - V^i_l| \right].
\end{aligned}
\end{align}
The first equation $\eqref{eq:dem_fe}_1$ updates the position using the current velocity, while the second equation $\eqref{eq:dem_fe}_2$ updates the velocity using the force $\bfF^i(t_l)$. Note that the forward Euler method is an explicit time integration scheme, where the right-hand side of the ODE is evaluated at time $t_l$ and used to advance the solution to time $t_{l+1}$. The forward Euler scheme is explicit and first-order accurate in time. The local truncation error is $\mathcal{O}(\Delta t^2)$, and thus the global error is $\mathcal{O}(\Delta t)$, i.e.,
\begin{equation}  \label{eq:errfe}
|X^i_{N_t} - \bfx^i(T) | = \mathcal{O}(\Delta t), \qquad |V^i_{N_t} - \bfv^i(T) | = \mathcal{O}(\Delta t).
\end{equation}
We require a sufficiently small time-step $\Delta t$ to ensure accuracy and numerical stability, and we adopt this method here for simplicity and computational efficiency. 
Other numerical methods may be applied to solve the floe system.
The first-order accuracy of the numerical scheme ensures that the discretized simulation of our continuous-time particle model captures the essential qualitative features of sea ice floe dynamics described by \eqref{eq:dem}. While higher-order schemes may offer improved precision, the current level of accuracy is sufficient to reproduce key behaviors of the ice floe dynamics as well as to validate the theoretical findings established in Section \ref{sec:pmodel}

\subsection{Momentum and energy in a discrete setting}
In this subsection, we study the temporal evolution of total momentum and energy and compare them with analytical results in Lemma \ref{lem:sumparticle} and Theorem \ref{thm:v0}.
First, we define the discrete velocity moments as follows.
\begin{align}
\begin{aligned}  \label{eq:dMs}
& \M_0  = \sum_{i=1}^n m^i, \qquad \M_1 = \sum_{i=1}^n m^i V^i, \qquad \M_2 = \M_{2,V} + \M_{2,X}, \\
 & \M_{2,V} = \frac{1}{2} \sum_{i=1}^n m^i | V^i |^2, \qquad \M_{2,X} = \frac{1}{4n}\sum_{i,j=1}^n \kappa^{ij}_1  (\delta^{ij}(X^i, X^j))^2.
\end{aligned}
\end{align}
For the temporal evolution of total momentum, we use \eqref{eq:dem_fe} and \eqref{eq:dMs} to find 
\begin{align} 
\begin{aligned} \label{eq:dM1}
    \frac{\d\M_1}{\d t} \Big|_{t_l} & = \frac{\d}{\d t} \sum_{i=1}^n m^i V^i \Big|_{t_l}  = \sum_{i=1}^n m^i \frac{\d V^i}{\d t} \Big|_{t_l}  = \sum_{i=1}^n m^i \frac{ V^i_{l+1} - V^i_l }{\Delta t} \\
    & =  \frac{1}{n} \sum_{i, j=1}^{n} \bff_c^{ij}(t_l) + \sum_{i=1}^n \alpha^i (U^i_l - V^i_l) |U^i_l - V^i_l| \\
    & = \sum_{i=1}^n \alpha^i (U^i_l - V^i_l) |U^i_l - V^i_l|,
\end{aligned}
\end{align}
where the differential $d$ is understood as a discrete differential operator.  On the other hand, we use \eqref{eq:dem_fe} and \eqref{eq:dMs} to derive the temporal evolution of discrete energy functional:
\begin{align*}
\begin{aligned} 
    \frac{\d \M_{2,V}}{\d t} \Big|_{t_l}  & = \frac{1}{2} \sum_{i=1}^n m^i \frac{\| V^i_{l+1} \|^2 - \| V^i_l \|^2 }{\Delta t} \\
    & = \frac{1}{2} \sum_{i=1}^n m^i \frac{ V^i_{l+1} - V^i_l  }{\Delta t} \cdot (V^i_{l+1} + V^i_l  ) \\
    & = \frac{1}{2} \sum_{i=1}^n \Big( \frac{1}{n} \sum_{j=1}^{n} \bff_c^{ij}(t_l) + \alpha^i (U^i_l - V^i_l) |U^i_l - V^i_l| \Big) \cdot (V^i_{l+1} + V^i_l  ) \\
    & = \frac{1}{2} \sum_{i=1}^n \Big( \frac{1}{n} \sum_{j=1}^{n} \bff_c^{ij}(t_l) \Big) \cdot (V^i_{l+1} + V^i_l  ) + \frac{1}{2} \sum_{i=1}^n \Big( \alpha^i (U^i_l - V^i_l) |U^i_l - V^i_l| \Big) \cdot (V^i_{l+1} + V^i_l  ) \\
    & =  \frac{1}{n}  \sum_{i, j=1}^n  \big( \kappa^{ij}_1 \delta^{ij} + \kappa^{ij}_2 (V^i_l  - V^j_l )\cdot\bfn^{ij} \big) \bfn^{ij} \cdot  V^i_l \\
    & \quad + \frac{1}{2n}  \sum_{i, j=1}^n  \big( \kappa^{ij}_1 \delta^{ij} + \kappa^{ij}_2 (V^i_l  - V^j _l)\cdot\bfn^{ij} \big) \bfn^{ij} \cdot (V^i_{l+1}-V^i_l)  \\
    & \quad + \frac{1}{2} \sum_{i=1}^n \Big( \alpha^i (U^i_l - V^i_l) |U^i_l - V^i_l| \Big) \cdot (V^i_{l+1} + V^i_l  ) \\
    & = - \frac{\d \M_{2,X}}{\d t}  +  \frac{1}{2n} \sum_{i, j=1}^n \kappa^{ij}_2  \big( (V^i_l - V^j_l )\cdot\bfn^{ij} \big)^2   \\
    & \quad + \frac{1}{2n}  \sum_{i, j=1}^n  \big( \kappa^{ij}_1 \delta^{ij} + \kappa^{ij}_2 (V^i_l  - V^j _l)\cdot\bfn^{ij} \big) \bfn^{ij} \cdot (V^i_{l+1}-V^i_l)  \\
    & \quad + \frac{1}{2} \sum_{i=1}^n \Big( \alpha^i (U^i_l - V^i_l) |U^i_l - V^i_l| \Big) \cdot (V^i_{l+1} + V^i_l  ).
\end{aligned}
\end{align*}
This leads to
\begin{align} \label{eq:dM2}
\begin{aligned} 
    \frac{\d \M_2}{\d t}  & =   \frac{1}{2n} \sum_{i, j=1}^n \kappa^{ij}_2  \big( (V^i_l - V^j_l )\cdot\bfn^{ij} \big)^2   \\
    & \hspace{0.2cm} + \frac{1}{2n}  \sum_{i, j=1}^n  \big( \kappa^{ij}_1 \delta^{ij} + \kappa^{ij}_2 (V^i_l  - V^j _l)\cdot\bfn^{ij} \big) \bfn^{ij} \cdot (V^i_{l+1}-V^i_l)  \\
    &  \hspace{0.2cm} + \frac{1}{2} \sum_{i=1}^n \Big( \alpha^i (U^i_l - V^i_l) |U^i_l - V^i_l| \Big) \cdot (V^i_{l+1} + V^i_l  ).
\end{aligned}
\end{align}

\rev{We note that, due to the presence of the last two terms on the right-hand side of \eqref{eq:dM2}, the monotone decay of the total energy with a lower bound, as established in Lemma \ref{lem:sumparticle} for the time-continuous case, is not guaranteed in the discrete setting. Developing a numerical method that preserves this energy decay property with a lower bound remains an open problem.}

\subsection{Numerical validations}
In this subsection, we consider several simulation scenarios to validate the theoretical findings. 
We assume that the floe sizes and thickness do not change over time, and they follow the distributions \eqref{floe_size_pdf} and \eqref{floe_thickness_pdf}, respectively. 
For numerical simulations, we use $N=100$ floes and adopt a non-dimensionalized setting with a double periodic domain $[-\pi, \pi]^2$ in 2D. We set the model parameters (based on physical ones in \cite{herman2016discrete,li2020laboratory,herman2019wave}) $ E_e = 100, e_r = 0.15.$ For time discretization, we consider 
\[ T=20, \quad \Delta t = 10^{-3},  \quad N_t = 2\times 10^4. \]
The floe location and velocities are initialized randomly. We consider three simulation scenarios below. 

\begin{figure}[ht]
    \includegraphics[width=1.0\textwidth]{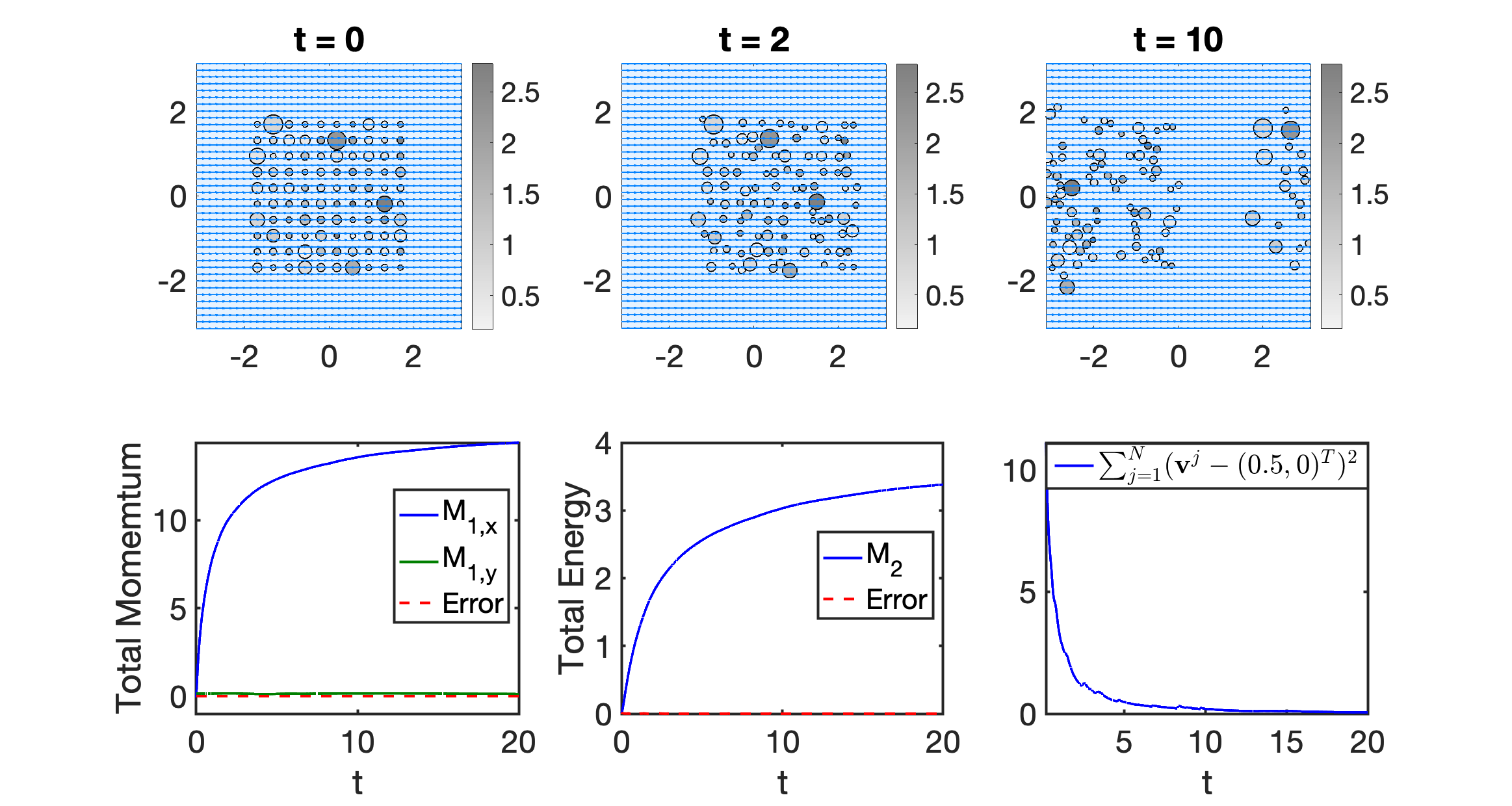}
    \caption{Total momentum and energy for the particle simulation of 100 floes when the ocean velocity is constant $\bfu=(0.5, 0)^T.$ }
    \label{fig:cv}
\end{figure}

In Figure \ref{fig:cv}, we show the simulation results when the ocean velocity is constant $\bfu=(0.5, 0)^T$ over space and time. In Figure \ref{fig:v1}, we consider the case when the ocean velocity is constant over time and varying over space, and in Figure \ref{fig:vt}, we consider the case when the ocean velocity varies over both space and time. 
\rev{For Figure \ref{fig:vt}, the ocean velocity is generated from a linear stochastic model based on Shallow water equations. 
This setting of a linear stochastic model for the Fourier mode of ocean current to approximate the nonlinear ocean dynamics captured by the Shallow water equation is a widely used and justified in, for example, \cite{majda2016introduction,farrell1993stochastic}. 
We use 80 Fourier modes for a complex ocean velocity field and refer to \cite{chen2021lagrangian, chen2022superfloe} for more details.}
We show the snapshots at times $t=0, 2, 10$ where the color bar shows the thickness of floes.

\begin{figure}[ht]
    \includegraphics[width=1.0\textwidth]{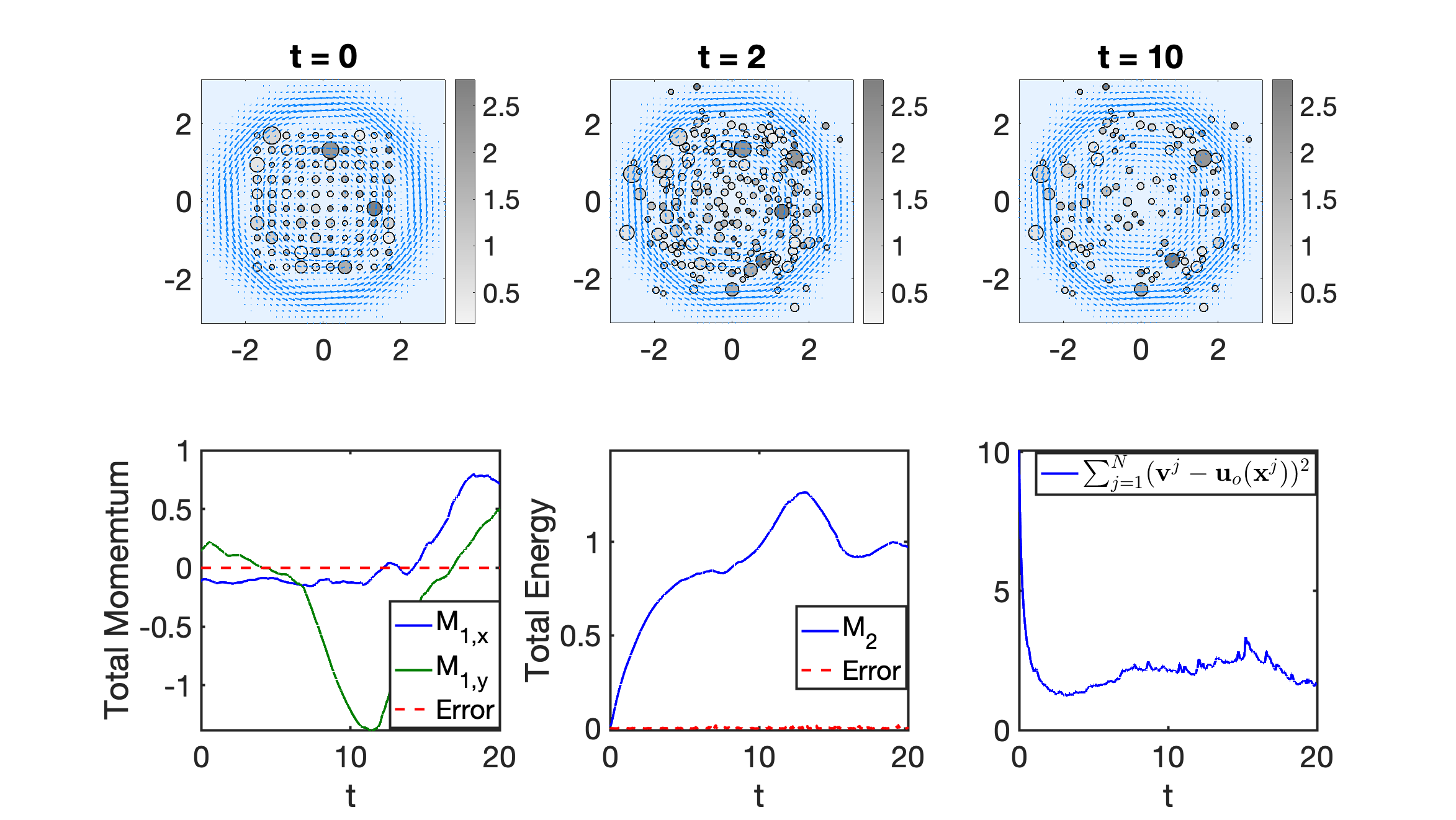}
    \caption{Total momentum and energy for the particle simulation of 100 floes when the ocean velocity is prescribed as $\bfu=(-ys, xs)^T$ with $s= (x^2+y^2-4)e^{(-(x^2+y^2)(x^2 + y^2 -8)/8)}/32.$ }
    \label{fig:v1}
\end{figure}

\begin{figure}[ht]
    \includegraphics[width=1.0\textwidth]{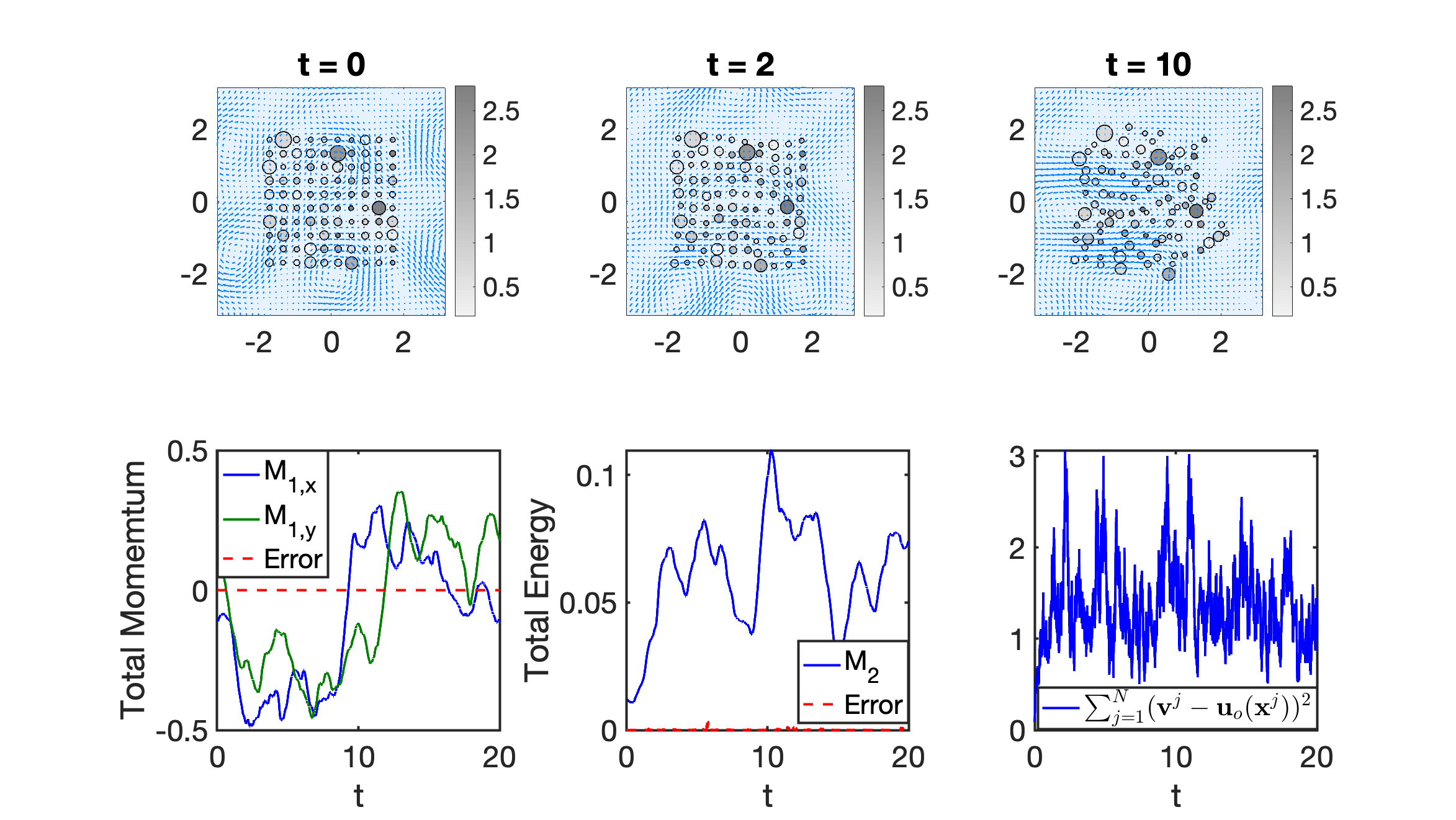}
    \caption{Total momentum and energy for the particle simulation of 100 floes when the ocean velocity is generated from a linear stochastic model based on \rev{shallow} water equations (c.f., \cite{chen2022superfloe}).}
    \label{fig:vt}
\end{figure}
In the bottom left plots of \rev{Figure \ref{fig:cv}, Figure \ref{fig:v1}, and Figure \ref{fig:vt}}, we show the total momentum in each component as well as the error defined as 
$$
\left|
\frac{\d\M_1}{\d t} \Big|_{t_l} -\sum_{i=1}^n \alpha^i (U^i_l - V^i_l) |U^i_l - V^i_l|
 \right|
$$
based on \eqref{eq:dM1}. These errors remain \rev{to be} zero over time; the small numerical errors are from the machine's precision and summation. This validates \eqref{eq:dM1} and the first result (i) of Lemma \ref{lem:sumparticle} in the discrete setting.

The bottom middle plots of \rev{Figure \ref{fig:cv} to Figure \ref{fig:vt}} show the total energy as well as the error defined similarly based on \eqref{eq:dM2} (simply, it is the absolution value error of the left-hand-size minus the right-hand side of \eqref{eq:dM2}).  Similarly, these errors remain zero over time; the small numerical errors are from the machine's precision and summation. This validates \eqref{eq:dM2} and the second result (ii) of Lemma \ref{lem:sumparticle} in the discrete setting. Herein, when considering the discrete discretization error for (ii) of Lemma \ref{lem:sumparticle}, an error estimate of order $\mathcal{O}(\Delta t)$ as in \eqref{eq:errfe} is observed. 

\rev{At} the bottom right plots of \rev{Figure \ref{fig:cv} to Figure \ref{fig:vt}}, we can see the closeness of the ice floes to the ocean currents in different scenarios: constant in both time and space, constant in time and varying in space, and varying in both time and space. Particularly, we note that the bottom right plots of Figure \ref{fig:cv} show the decaying of the velocities in terms of error $\sum_{j=1}^N |\bfv^j - \bfu(\bfx^j)|^2$ when the ocean is constant $\bfuu=(0.5, 0)^T$. This validates the Theorem \ref{thm:v0}: $\lim_{t\to \infty} \| \bfv^i - \bfu \|=0$. The bottom right plots of \rev{Figure \ref{fig:v1} and Figure \ref{fig:vt}} do not have this velocity converging property. This is expected as the ocean constant assumption in Theorem \ref{thm:v0} is not satisfied. 
 \rev{In Figure \ref{fig:vt}, the strong oscillations arise from the large variations in ocean velocity in this simulation case.}

\begin{figure}[ht]
    \includegraphics[width=1.0\textwidth]{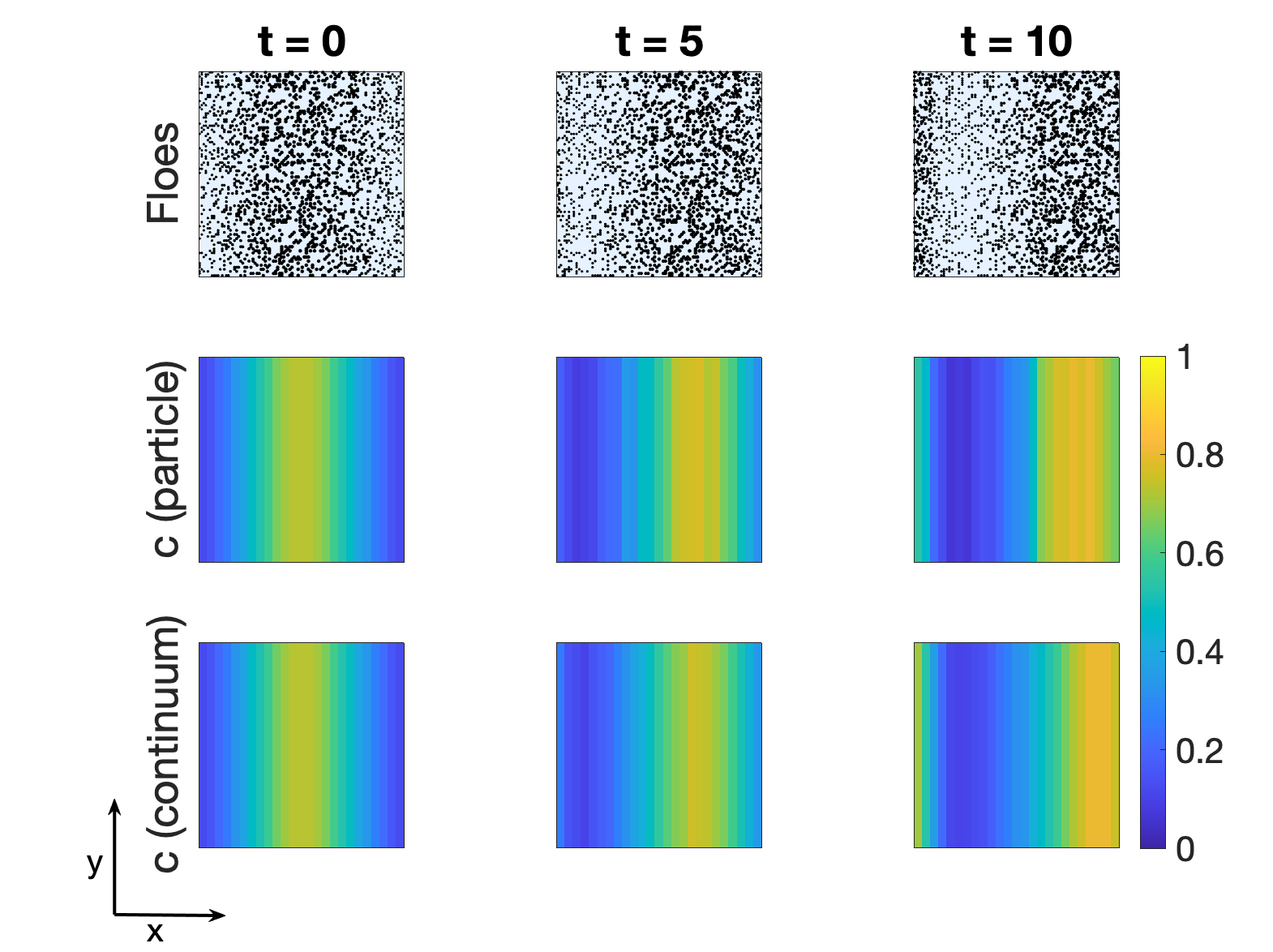}
    \vspace{-0.8cm}
    \caption{\rev{Comparison of concentration $c$ for particle and hydrodynamic models.   There are 10000 ice floes of different sizes, and 2000 floes were chosen randomly and plotted in the first row at time instances $t=0,5,10$. The second row shows the concentration of the floe particles in a uniform grid of size $25\times25$. The third row shows the concentration given by the hydrodynamic model solved on the same mesh. The ocean velocity is $\bfu=\big(0.1+0.1(x+\pi)(\pi-x), 0 \big)^T$. }}
    \label{fig:c}
\end{figure}

\rev{Lastly, Figure~\ref{fig:c} presents a comparison of the macroscopic behavior in concentration between the particle and hydrodynamic models in the regime of a large number of floes. In this experiment, we considered $10{,}000$ floes with varying sizes but uniform thickness, so that the mass density coincides with the floe concentration up to scaling factors. To mimic the gathering and scattering behavior studied in~\cite{deng2024particle}, the floes were advected by a spatially varying ocean velocity field 
$
\bfu = \big(0.1+0.1(x+\pi)(\pi-x), \, 0 \big)^T,
$
corresponding to a rightward flow with quadratic variation in $x$. For the hydrodynamic model, it is sufficient to solve the first two equations in (4.20), since the third equation is decoupled from the dynamics of energy $E$. 
For the numerical discretization, we employed the two-step Adams--Bashforth scheme with a fine time step size $\Delta t = 2\times 10^{-4}$ to ensure high accuracy in the temporal evolution. For the spatial discretization, we used a second-order central finite difference method on a uniform $25\times 25$ grid with double periodic boundary conditions. The first row of Figure~\ref{fig:c} shows the trajectories of $2000$ randomly chosen floes at time instances $t=0,5,10$ (plotting all $10{,}000$ floes would result in a completely dark figure). The second row reports the particle concentration, computed on each grid cell as the ratio of the total floe area contained in that cell to the cell area. We note that some non-smooth features appear in this representation, which are due to floes lying near the boundaries of grid cells, affecting the local evaluation of concentration. The third row displays the corresponding concentration obtained from the hydrodynamic model, solved on the same mesh. 
Overall, the hydrodynamic solution exhibits good agreement with the particle concentration at the macroscopic level, thereby validating the hydrodynamic approximation derived from the particle model in this setting. This agreement provides numerical evidence that the kinetic and hydrodynamic descriptions capture the essential large-scale dynamics of sea ice floes in regimes with a sufficiently large number of particles.}

\section{Concluding remarks} \label{sec:conclusion}
In this paper, we have introduced a theoretical framework for modeling sea ice floe dynamics, integrating particle, kinetic, and hydrodynamic approaches. Building on the Cucker-Smale particle models, we have derived the Vlasov-type kinetic description and a corresponding hydrodynamic formulation, enabling a systematic transition from discrete floe interactions to large-scale dynamics. In an idealized setting, we focused on non-rotating floes with linear velocity dynamics under oceanic drag forces, incorporating empirical laws for floe size and thickness distributions. Total momentum and energy are analyzed. Numerical simulations confirmed the \rev{potential} of this particle-continuum approach in capturing key sea ice behaviors in a multiscale setting. 

Future research directions include incorporating rotational dynamics, atmospheric drag,  thermodynamic effects such as \rev{melting/refreezing, floe fracturing, and more advanced contact laws in 3D. 
Another direction for future work is to investigate floe dynamics in regimes with turbulent ocean fields.
In numerical simulation, developing high-order numerical schemes may} further enhance accuracy and stability. 
\rev{It} remains unclear whether the forward Euler scheme preserves the energy dissipation property—that is, whether the right-hand side of \eqref{eq:dM2} remains negative in the absence of ocean drag forces. Developing numerical schemes that rigorously capture this dissipative behavior, ideally with a guaranteed positive lower bound, remains an open problem.
This is an interesting open question and will be left for future work. 
Additionally, validation against observational data will be crucial for assessing model reliability. Advancing these aspects will contribute to a deeper understanding of polar ice behavior with \rev{the potential for more accurate modelling and prediction} for climate science and oceanography.

\section*{Acknowledgements}
Q.D. is partially supported by the Australian National Computing Infrastructure (NCI) national facility under grant zv32 and the HMI Computing for Social Good Seed Research Grant., and the work of S.-Y. Ha is supported by National Research Foundation(NRF) grant funded by the Korea government(MIST) (RS-2025-00514472).

\bibliographystyle{siam}
%\bibliography{ref.bib}
%\bibliographystyle{siamplain}
\bibliography{ref}

\appendix{}
\section{A reduced kinetic model for floe dynamics} \label{app:a}
\rev{We} derive a kinetic model for floe dynamics where floe size and thickness distributions are provided as empirical probability density functions.  
In particular, according to observational data, the floe size distribution satisfies a power law \cite{stern2018seasonal}:
\begin{equation}\label{floe_size_pdf}
P(r) = a\frac{\kappa^a }{r^{a+1}}, \quad r > 0,
\end{equation}
where $r$ is the diameter of the floe, and $a$ and $\kappa$ are parameters. On the other hand, the floe thickness follows the Gamma distribution whose density function is given as follows (see \cite{bourke1987sea}):
\begin{equation}\label{floe_thickness_pdf}
\Gamma(h) = \frac{1}{\Gamma(k)\theta^k}h^{k-1}e^{-\frac{h}{\theta}}, \quad h > 0,
\end{equation}
where $k$ and $\theta$ are the shape and scale parameters, respectively. These are both common choices in practice. \rev{With this in mind,} we introduce a conditional probability density function 
\begin{equation}  \label{eq:condp}
F(t, \bfx,\bfv, r, h) = \rev{f(t, \bfx,\bfv | r, h)} P(r) \Gamma(h). 
\end{equation}
We substitute the above ansatz \eqref{eq:condp} into \eqref{eq:p-23} to derive a kinetic equation for conditional distribution function $f$:
\begin{equation} \label{eq:p-35}
 \partial_t f +   \bfv  \cdot \nabla_{\bfx} f  +   \nabla_{\bfv} \cdot (\bff[f] f )= 0,
\end{equation}
where 
\begin{align}
\begin{aligned}  \label{eq:p-36}
& \bff[f]:=  \bff_o +  \bff_{c}[f] , \bff_{c}[f]  = \bff_{c,\bfv}[f] +  \bff_{c,\bfn}[f], \quad  \bff_o(\bfv) :=  \gamma_o \left(\bfu -\bfv \right)\left| \bfu -\bfv \right|, \\
& \bff_{c,\bfn}[f](t,\bfz) :=  \int_{\bbr^{4} \times \bbr_+^{2}}  \gamma_{\bfn}  \chi(|\bfx^* - \bfx| - (r + r^*)) \bfn(\bfx, \bfx^*)  f(t, \bfz^*)  P(r^*) \Gamma(h^*) d\bfz^*, \\
& \bff_{c,\bfv}[f](t,\bfz) :=  \int_{\bbr^{4} \times \bbr_+^{2}}   \gamma_{\bfv} [ (\bfv  - \bfv^*) \cdot \bfn(\bfx, \bfx^*)] \bfn(\bfx, \bfx^*) f(t, \bfz^*)  P(r^*) \Gamma(h^*) d\bfz^*.
\end{aligned}
\end{align}
Following the previous subsections, similar momentum and energy estimates of the kinetic model \eqref{eq:p-35}-\eqref{eq:p-36} can be derived for this setting.

\end{document}